\newcommand{\mycycle}[2]{
  \def\numpoly{#1}    
  \def\startangle{90} 
  \def\pradious{#2}   
  \pgfmathparse{int(\startangle+360/\numpoly)}%
     \let\nextangle=\pgfmathresult
  \pgfmathparse{int(\startangle-360/\numpoly+360)}%
    \let\endangle=\pgfmathresult
  \foreach \i [count=\ii from 1] in
  {\startangle,\nextangle,...,\endangle} {
    \pgfmathsetmacro{\mycolor}{\LstColors[\ii-1]}
    \path (\i:\pradious) node[fill=\mycolor, draw=black, shape=circle,
    inner sep=1pt, minimum size=2.5mm, thick] (p\ii) {};
  }
  \foreach \i [count=\ii] in {2,...,\numpoly, 1} { 
    \draw[thick, black] (p\ii) -- (p\i);
  }
}
\newlist{myenum}{enumerate}{2}
\setlist[myenum,1]{label*=$(C_{\arabic*})$.} 
\setlist[myenum,2]{label=$(C_{\arabic{myenumi}}).\alph*$.}
\definecolor{rrorange}{HTML}{FF8300}            
\definecolor{rrgreen}{HTML}{299617}             
\definecolor{rrblue}{HTML}{1F75FE}              
\definecolor{scarlet}{rgb}{1.0,0.13,0.0}        %
\definecolor{rrbrown}{rgb}{0.75,0.25,0}         
\definecolor{sienna}{rgb}{0.53, 0.18, 0.09}
\definecolor{tenne}{rgb}{0.8, 0.34, 0.0}
\definecolor{Cred}{HTML}{FF0000} %
\definecolor{Cgray}{HTML}{0FAFFF} %
\definecolor{Corange}{HTML}{FF8C00} %
\definecolor{Cgreen}{HTML}{008000} %
\newtheorem{theorem}{Theorem}
\newtheorem{lemma}{Lemma}
\newtheorem{corollary}{Corollary}
\theoremstyle{definition}
\newtheorem{definition}{Definition}
\newtheorem{remark}{Remark}
\DeclareMathAlphabet{\txcal}{U}{tx-cal}{m}{n}
\DeclareMathAlphabet{\ducal}{U}{dutchcal}{m}{n}
\DeclareMathAlphabet{\rsocal}{U}{rsfso}{m}{n}
\newcommand{\Ber}{\ducal B}
\newcommand{\Bhood}{\mathcal B}
\newcommand{\CR}{\txcal R(\Phi)}
\newcommand{\D}{\txcal D}
\renewcommand{\det}{\operatorname{\textup{det}}}
\newcommand{\E}{\mathbb{E}}
\newcommand{\G}{\rsocal G}
\newcommand{\eS}{\mathcal{S}}
\newcommand{\eC}{\mathcal{C}}
\newcommand{\eK}{\mathcal{K}} 
\newcommand{\F}{\mathfrak{F}}
\newcommand{\Fix}{\mathsf{Fix}}
\newcommand{\iD}{\D^\circ}
\newcommand{\iG}{\mathit{\Xi}}
\newcommand{\Ind}{{\mathbf 1}}
\newcommand{\JacF}[1]{\mathit{DF}(#1)}
\newcommand{\limset}{\mathfrak{L}(X)}
\newcommand{\N}{\mathbb N}
\newcommand{\Prb}{\mathbb P}
\DeclareMathOperator*{\powerset}{\raisebox{.15\baselineskip}{\Large\ensuremath{\wp}}}
\newcommand{\quasegrd}{\mathfrak g}
\newcommand{\R}{\mathbb R}
\newcommand{\rpar}{\rho}
\newcommand{\Supp}[1]{\operatorname{\textup{supp}}(#1)} 
\newcommand{\TsD}{\mathrm{T}\D}
\newcommand{\Z}{\mathbb Z}
\newcommand{\Var}{\textup{Var}}
\newcommand{\V}{\mathfrak{V}} 
\DeclarePairedDelimiter\floor{\lfloor}{\rfloor}
\newcommand{\dbr}[1]{\left\langle\!\left\langle
      #1\right\rangle\!\right\rangle}
\begin{document}
\title[Interacting vertex reinforced random walks]{Interacting vertex
  reinforced random walks on complete sub-graphs}
\author[F. P. A. Prado]{Fernado P. A. Prado}
\address[F. P. A. Prado and
         R. A. Rosales]{Departameto de Computa\c{c}\~ao e
           Matem\'atica, Universidade de S\~ao Paulo, 
	   Avenida Bandeirantes 3900, Ribeir\~ao Preto, S\~ao Paulo, 
	   14040-901, Brasil}
\email{feprado@usp.br, rrosales@usp.br}
\author[R. A. Rosales]{Rafael A. Rosales}

\subjclass[2010]{Primary 60K35, Secondary 37C10}
\keywords{reinforced random walk, recurrence, transience, stochastic
  approximations, stability}
\date{\today} 

\begin{abstract}
This article introduces a model for interacting vertex-reinforced
random walks, each taking values on a complete subgraph of a locally
finite undirected graph. The transition probability for a walk to a
given vertex depends on the cumulative proportion of visits by all
walks that have access to that vertex. Proportions are modified by
multiplication by a real valued interaction parameter and the addition
of a parameter representing the intrinsic preference of the walk for
the vertex. This model covers a wide range of interactions, including
the cooperation (attraction) or competition (repulsion) of several
walks at single vertices.  We are principally concerned with strong
laws for the proportion of visits to each vertex by all walks.  We
prove that this measure converges almost surely towards the set of
fixed points of the transition probabilities. Almost sure convergence
to a single fixed point is in fact the generic behaviour as we show
this to hold for almost all parameter values of our model.

Beyond almost sure convergence, our model provides a general framework
that yields a detailed description of the limiting behaviour for any
choice of interaction parameters and subgraph geometry. We illustrate
this by analyzing interacting walks on complete graphs, stars, and
cycles, chosen to highlight the model's broad applicability. The
central contribution lies in offering a powerful tool to analyse
diverse types of interactions mediated by the intersections of
subgraphs. Importantly, our results provide not only convergence
criteria, but also conditions under which the empirical proportions of
the walks' visits fail to converge to certain boundary points of the
space where their trajectory evolves.
\end{abstract}

\maketitle

\section{Introduction}\label{sec:intro}
Self vertex-reinforced random walks on a connected locally finite
graph, $G = (V, E)$, have attracted a great deal of attention for more
than three decades. This process was first formally described by
\cite{P92} as a variation of the edge-reinforced walk introduced in
\cite{CD87} and then subsequently studied in \cite{B97}. The evolution
along the vertices of $G$ is governed by a transition probability
which is a function of the number of times that the walk has visited
each vertex. When the weight function is non-decreasing, the
transition probabilities tend to favour vertices that have already
been visited by the walk.  A distinctive property of this process is
that the walk can become confined to a proper subgraph of $G$: for
all sufficiently large numbers of steps the trajectory of the walk is
restricted to a random but fixed subset of the vertex set $V$. This
behaviour, referred to as localisation, has been extensively studied
and shown to occur almost surely or with positive probability for
several weight functions and a relatively general class of graphs, see
\cite{B97}, \cite{PV99}, \cite{V01}, \cite{T04}, \cite{BT11},
\cite{BSS14}. \cite{CT2017} show that if the weight function increases
sufficiently fast, then the walk localises almost surely on two
vertices in a large class of graphs. It is conjectured that the size
of the attracting subgraph depends on the geometry of $G$, but this
question is in general a widely open problem.

In contrast to a large literature on self vertex-reinforced random
walks, studies addressing their interacting counterparts are
relatively scarce. The articles \cite{C14}, \cite{CDLM19},
\cite{RPP22} and \cite{PCR2023} present some steps in this direction,
but there is substantial room for further development. \cite{C14}
considers a model of two vertex-reinforced random walks on finite
complete graphs in which the transition probability of one of the
walks decreases with the number of visits to that vertex by the other
walk. This article shows that the overlap of empirical vertex
occupation measures by each walk is in the long run almost surely
arbitrarily small. \cite{CDLM19} consider a model of $m>1$ interacting
vertex-reinforced random walks and focus on their synchronisation
towards a common limit, building upon the framework
described in \cite{DLM14}. A functional central limit for the
fluctuations about this limit is also presented. \cite{PCR2023}
consider a model for two interacting random walks on the two vertex
graph as a device to study the recurrence properties of two
exponentially repelling random walks taking values on
$\Z$. \cite{RPP22} generalises \cite{PCR2023} by considering $m>1$
interacting walks with an exponential weight function on complete
graphs. The article demonstrates that the vertex occupation measure of
each walk converges almost surely, provided the limit set consists of
isolated points.  Besides these works, \cite{ER24} and \cite{GMR2024}
have recently considered two distinct models of interacting
edge-reinforced random walks. \cite{ER24} describes a process on
directed graphs, while \cite{GMR2024} considers interaction on $\Z$
and on a linear three vertex graph. Both articles obtain localisation
results.

This article considers a model of interacting vertex-reinforced random
walks in which each walk takes values on a complete subgraph of $G$
and interacts with others through shared vertices.  Our model arises
as a natural generalisation of the self reinforced random walk in
\cite{P92} to include interactions.  To introduce the model, let $i
\in [m] = \{1, 2, \ldots, m\}$ and $V^i \subset V$ be such that $V =
\bigcup_{i=1}^m V^i$. Define $G^i =(E^i,V^i)$, $i \in [m]$, as the
complete graph with vertices $V^i$ and denote by $\G$ the set of
complete graphs $\G =\{G^i = (E^i, V^i); i \in [m]\}$. For each $i\in
[m]$ let $W^i = \{W^i(n);\  n \geq 0\}$ be a random walk taking values
on $V^i$. These walks are defined on a suitable probability space
$(\Omega, \F, \Prb)$ as follows.  For each $i\in[m]$, $v \in V^i$,
set $X_v^i(0) = 1/d^i$ where $d^i = |V^i|$.  For $n\geq 1$ denote by
$X_v^i(n)$ the empirical occupation measure of the vertices $v \in
V^i$ by the $i$-th walk, that is
\begin{equation}
  \label{eqn:occupation_measure}
  X_v^i(n)
  = \frac{1}{d^i + n}\bigg(1 + \sum_{k=1}^n \Ind\big\{W^i(k)
  =  v\big\}\bigg).
\end{equation}
This is of course also the relative local time of $W^i$ at $v$ up to
time $n$, if one assumes that $X_v^i(0) = 1/d^i$ is the relative local
time built up over the $d^i$ steps before time zero by the walk $W^i$. 

Let
\begin{equation}\label{eqn:X}
  X = \{X(n);\ n\geq 0\} \quad \text{with} \quad
  X(n) = (X^1(n), X^2(n),\ldots, X^m(n))
\end{equation}
be the vertex occupation process, where
\[
  X^i(n) = (X^i_1(n), \ldots, X^i_{d^i}(n)), \qquad i\in [m]. 
\]
When necessary, we shall order the coordinates of $X^i(n)$ as
$\big(X_{\underline{1}}^i(n)$, $\ldots$,
$X_{{\underline{d}}^i}^i(n)\big)$, where, for each $i\in [m]$ and
$\ell \in [d^i]$, the vertex $\underline{\ell}^i \in V^i$ is
recursively defined as $\underline{1}^i = \min\{V^i\}$ and
$\underline{\ell}^i = \min\big\{V^i{\setminus}\{\underline{1}^i,
\ldots, \underline{(\ell-1)}^i\}\big\}$ for $\ell > 1$.  To simplify
notation, hereafter we drop any super index of underlined sub indexes
in double indexations. That is, $x_{\underline{\ell}}^i$ will be used
instead of $x_{\underline{\ell}^i}^i$. Observe that $X$ takes values
on the compact convex set $\D = \triangle^1
\times\cdots\times\triangle^m$; where $\triangle^i$ is the unitary
$d^i-1$ simplex, that is
\[
  \triangle^i
  =
  \bigg\{
    (x_1, \ldots, x_{d^i})
    \in \R^{d^i}\ \Big|\  
    x_{\ell} \geq 0,\ \ell=1,2,\ldots, d^i\ \text{and } 
    \sum_{\ell = 1}^{d^i} x_{\ell} = 1 
  \bigg\}.
\]

Let $\F_0 = \{\Omega, \varnothing\}$ and denote by $\F_n$ the
filtration generated by the walks $W^i$, $i\in [m]$, up to time $n
\geq 1$. Further, let 
\begin{equation}\label{eqn:Iv}
  I_v = \big\{i \in [m]\ \big|\ v \in V^i\big\}
\end{equation}
be the set of indices identifying those walks that have the vertex $v$
in common.  Conditionally on $\F_n$, for any $i \in [m]$, $v \in V^i$
and $n\geq 1$, the transition probability for the walk $W^i$ is given
by
\begin{equation} 
 \label{eqn:trans_prob}
  \Prb\big(W^i(n+1)= v \mid\F_n\big) = \pi_v^i(X(n)) 
\end{equation}
for $\pi_v^i : \D \to \R$, $i \in [m]$, $v \in V^i$, as the map
\begin{equation}
  \label{eqn:pi}
  \pi^i_v (x) = 
  \frac{x_v^i H_v^i(x)}{\sum_{w \in V^i} x_w^i H_w^i(x)},
  \qquad
  H_v^i(x)
  =
  \bigg(\eta_v^i + \sum_{j \in I_v} \rpar_{v}^{ij} x_v^j\bigg)^\alpha, 
\end{equation}
where 
\begin{equation}\label{eqn:modelpar}
  \eta_{v}^i > 0, \quad \alpha > 0, \quad \rpar_{v}^{ij} \in \R \quad
  \text{ for } \quad v \in V \, \text{ and } \, i, j \in I_v 
\end{equation}
are parameters of the model. Throughout this article, $\rpar_v^{ij}$
and  $\eta_v^i$ are subjected to the conditions
\begin{gather}
 \label{eqn:rho}
 \rpar_v^{ij} = \rpar_v^{ji} \\ 
 \label{eqn:eta}
 \eta_v^i > \sum_{j \in I_v: \rpar_{v}^{ij} < 0}
 \big|\rpar_{v}^{ij}\big|.
\end{gather}
The latter condition ensures that the numerator of $\pi_v^i$ is
non-negative and the denominator is greater than zero. We assume
further the random variables $W^1(n + 1)$, $\ldots$, $W^m(n + 1)$ to
be independent conditionally on $\F_n$. The joint process $\{W(n) =
(W^1(n), \ldots, W^m(n)); n\geq 0\}$ is completely defined by
specifying the initial condition $X^i_v(0)$ and the smooth map $\pi:
\D\to\D$, which at $x=\big(x^1_{\underline{1}}$, $\ldots$,
$x^1_{\underline{d}^1}$, $\ldots$, $x^m_{\underline{1}}$, $\ldots$,
$x^m_{\underline{d}^m}\big)$ takes the value
\begin{equation}
  \label{eqn:THE_pi} \pi(x) = \big(\pi^1_{\underline{1}}(x), \ldots,
\pi^1_{{\underline{d}}^1}(x), \ldots, \pi^m_{\underline{1}}(x),
\ldots, \pi^m_{{\underline{d}}^m}(x)\big).
\end{equation}

As we shall see, this model covers a wide variety of interactions,
which include different strengths of vertex repulsion and attraction
determined by the interaction parameters $\rpar_v^{ij}$. Note that the
factor $H_v^i(x)$ increases with the parameter $\eta_v^i$, which
represents the intrinsic preference of walk $i$ for vertex
$v$. Furthermore, $H_v^i(x)$ also depends on $\rpar_{v}^{ij} x_v^j$
with $j \in I_v$, indicating the weighted proportions of visits to $v$
by all other walks $j$ that may visit $v$. The model is quite general,
encompassing scenarios such as $\rpar_{v}^{ij} < 0 < \rpar_{w}^{ij}$
for some $v, w, i,$ and $j$. In this case, walks $i$ and $j$ repel
each other at vertex $v$ but attract each other at vertex
$w$. Additionally, it may be the case that $\rpar_{v}^{ii} < 0 <
\rpar_{w}^{ii}$, where walk $i$ is negatively influenced by its own
visits to vertex $v$ but positively influenced with respect to its
visits to vertex $w$.

Determining whether the vertex occupation process $X$ converges almost
surely and identifying its possible limits in $\D$ depending upon
$\G$, $\rpar_v^{ij}$, and $\eta_v^i$, are non trivial questions. This
article aims at addressing these issues in some generality. We show
first that the almost sure asymptotic behaviour of $X$ is intimately
related to the set of fixed points of $\pi$. We then provide a general
characterisation of this set and under a mild regularity condition we
show that $X$ converges almost surely towards some element of this
set. The almost sure convergence of $X$ is in fact the generic
behaviour as this is shown to occur for almost all parameters
satisfying \eqref{eqn:rho} and \eqref{eqn:eta}.  This represents a
significant advance over \cite{C14}, \cite{RPP22} and \cite{PCR2023},
where convergence is established only for the case of two interacting
walks on a two vertex graph, or on complete graphs under the
additional assumption that the set of possible limit points is
finite. Far beyond the mere convergence of $X$, the model just
introduced allows for a detailed description of the limit points of
$X$ depending upon $\G$ and the parameters $\rpar_v^{ij}$ and
$\eta_v^i$. This characterisation relies in part on non-convergence
results of $X$ to specific points in the relative boundary and the
interior of $\D$. The former are based on arguments in \cite{P90} and
the latter constitute a generalisation of the non-convergence
criterion for the self vertex-reinforced random walk in \cite{P92}. We
illustrate our results by several examples while focusing on the
`competitive' version of the dynamics, namely when $\rpar_v^{ij} < 0$,
on complete graphs, star graphs and cycles. 

Interacting vertex and edge-reinforced random walks have been
considered in the machine learning community for graph clustering and
are known under the name of interacting particles, see \cite{SZ16} and
\cite{VUZ2018}. A different application of our model includes the
establishment of foraging patterns among competing species where food
resources are located at regions identified with the vertices of a
graph.  It is worth noting that for each vertex $v\in V$, the
numerator of $\pi_v^i(x)$ in \eqref{eqn:pi}, when $\alpha = 1$,
resembles the map $f^i:\R^m\to \R$ associated with the general
Lotka-Volterra equation for species $i$, namely $\dot x^i =
x^if^i(x)$, where $f^i(x) = r^i + \sum_{j=1}^m a_{ij} x^j$, and $r^i$
is the intrinsic growth rate of species $i$ and $A = (a_{ij})$ is the
$m\times m$ interaction matrix of the species at vertex $v$. From this
perspective, our model can be seen as several Lotka-Volterra models,
one per vertex, coupled by the structure imposed by $\G$. Few
deterministic versions including several models related to the one
described here have recently been considered in the literature, see
\cite{Slav2020}, \cite{CSSW2022} and \cite{LLC2024}. Taking a step
further, we observe that there exists a diffeomorphism mapping the
orbits of $\dot x^i = x^if^i(x)$ on $\R^m$ onto those of the
replicator equation on the unitary $m$-simplex, see \cite[Theorem
7.5.1]{HS98}. This conjugacy was explored by \cite{BT11}, enabling the
use of tools from evolutionary game theory to study the self
vertex-reinforced random walk in a large class of graphs. This
approach may also prove to be useful in our setting but we do not
pursue this here any further.


The remainder of this article is organised as follows. Our main
results are presented in Section~\ref{sec:main_results}.
Section~\ref{sec:examples} describes some examples to illustrate
asymptotic behaviours under various subgraph geometries and
reinforcement parameters. Proofs to all the results are provided in
Sections~\ref{sec:proof_Thdet} through \ref{sec:cycle_proofs}
and the Appendix.

\section{Statement of main results}\label{sec:main_results}
Let $\Fix(\pi) = \big\{x \in \D \mid x = \pi(x)\big\}$ be the set of
fixed points of $\pi$ defined by \eqref{eqn:THE_pi}.  For any given
subset of vertices $U \subset V$, denote by $\powerset(U)$ the power
set of $U$. Let
\[
  S = (S^1, \ldots, S^m) \in \V 
  \quad\text{where}\quad
  \V=\powerset(V^1) \times \cdots\times \powerset(V^m).
\]
Fix $S$ and then consider the following system of $d=\sum_{i\in [m]}
d^i$ linear equations in the real valued variables $x_v^i$, $i \in
[m]$ and $v \in V^i$, be defined for all $i\in [m]$ as
\begin{equation}\label{eqn:THE_system}
\left\{
\begin{array}{cl}
 \displaystyle
 \eta_v^i + \sum_{j \in I_v} \rpar_v^{ij} x_v^j = 
 \eta_w^i + \sum_{j \in I_w} \rpar_w^{ij} x_w^j, 
 \quad&  \forall v,  w \in S^i\ 
  \\[1.5em]
 \displaystyle
 \sum_{\ell \in S^i} x_\ell^i = 1, \quad \text{and} \quad  x_u^i = 0, 
 \quad& \forall u \in V^i{\setminus}S^i
\end{array}
\right.
\end{equation}
Let $\D(S)$ be the subset of solutions of \eqref{eqn:THE_system} such
that, for $x \in \D(S)$, $i \in [m]$ and $v \in S^i$, it follows that
$x_v^i > 0$.

Denote by $\limset$ be the random limit set of the vertex occupation
process $X = \{X(n); n\geq 0\}$. That is, $\limset$ stands for the set
of all $\omega$-limits, i.e. the accumulation points, of the process
$X$,
\begin{equation*}\label{eqn:Limit_Set_def}
  \limset = \bigcap_{n \geq 0} \overline{\bigcup_{k\geq n} \{X(k)\}}.
\end{equation*}

The first result of this article can now be stated as follows.

\begin{theorem}\label{th:det} 
Let $X$ be defined by \eqref{eqn:X} and $\pi$ by
\eqref{eqn:THE_pi}. The following statements hold:
\begin{enumerate}[$(i)$., nosep]
\item $\Fix(\pi)$ is the following finite union of connected sets
\begin{equation}\label{eqn:Fix}
    \Fix(\pi) = \displaystyle\bigcup_{S \in \V} \D(S).
\end{equation}
\item $\Fix(\pi) \neq \emptyset$. 
\item $\limset$ is almost surely connected.
\item 
\[
   \Prb\big(\limset \subset \Fix(\pi)\big) = 1.
\]
\item Let $R_\rpar(S)$ be the matrix of coefficients of the linear
system \eqref{eqn:THE_system} be defined by the interaction parameters
$\rpar_v^{ij}$. If $\det R_\rpar(S) \neq 0$ for all $S \in \V$, then
$\Fix(\pi)$ is finite and 
\[
  \sum_{x\, \in\, \Fix(\pi)} \Prb\Big(\lim_{n\to\infty} X(n) = x\Big)
  =  1. 
\]
\end{enumerate}
\end{theorem}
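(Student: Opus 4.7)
The first two parts are largely algebraic. For $(i)$, I would begin from the fixed-point equation $x_v^i = \pi_v^i(x)$ and rearrange it as $x_v^i\bigl(H_v^i(x) - \sum_{w \in V^i} x_w^i H_w^i(x)\bigr) = 0$ for each $i$ and $v$. Setting $S^i = \{v \in V^i : x_v^i > 0\}$, this forces $H_v^i(x)$ to take a common value (the convex combination $\sum_w x_w^i H_w^i(x)$) across $v \in S^i$; since $\alpha > 0$, extracting the $\alpha$-th root recovers exactly the first line of \eqref{eqn:THE_system}, while the simplex and support definitions supply the remaining constraints. Thus $\Fix(\pi) = \bigcup_{S \in \V} \D(S)$, and each $\D(S)$, being the intersection of an affine subspace with an open product-orthant, is convex and therefore connected; finiteness of the union follows from $|\V| < \infty$. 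For $(ii)$, $\pi$ is continuous on the compact convex set $\D$, so Brouwer's theorem gives $\Fix(\pi) \neq \emptyset$.

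For $(iii)$, I would cast $X$ as a Robbins--Monro scheme. Elementary manipulation of \eqref{eqn:occupation_measure} yields
$$X_v^i(n+1) - X_v^i(n) = \frac{1}{d^i + n + 1}\Big(\pi_v^i(X(n)) - X_v^i(n) + U_{n+1}^{i,v}\Big),$$
with $U_{n+1}^{i,v} = \Ind\{W^i(n+1) = v\} - \pi_v^i(X(n))$ a bounded $\F_{n+1}$-martingale increment by \eqref{eqn:trans_prob} and the stated conditional independence of the $W^i(n+1)$. Stacking coordinates gives a scheme with step size $\sim 1/n$, mean vector field $h(x) = \pi(x) - x$, and bounded noise. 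The standard ODE method then gives that $X$ is almost surely an asymptotic pseudo-trajectory of the flow $\dot x = h(x)$, so $\limset$ is a.s.\ a non-empty, compact, connected, internally chain transitive (ICT) set of this flow, giving the connectedness claim immediately.

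For $(iv)$, the hypothesis $\det R_\rpar(S) \neq 0$ forces the linear system \eqref{eqn:THE_system} to have at most one real solution for every $S \in \V$, so each $\D(S)$ is empty or a singleton and $\Fix(\pi)$ is finite. To promote $(iii)$ to $\Prb\bigl(\limset \subset \Fix(\pi)\bigr) = 1$, the key is a strict Lyapunov function for $\dot x = \pi(x) - x$. Exploiting the potential-game symmetry \eqref{eqn:rho}, a natural candidate is
$$P(x) = \sum_{i \in [m]} \sum_{v \in V^i} \eta_v^i x_v^i + \frac{1}{2} \sum_{v \in V} \sum_{i, j \in I_v} \rpar_v^{ij} x_v^i x_v^j,$$
which satisfies $\partial P / \partial x_v^i = g_v^i(x) := \eta_v^i + \sum_{j \in I_v} \rpar_v^{ij} x_v^j$, the base of the fitness $H_v^i = (g_v^i)^\alpha$. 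A direct calculation then gives
$$\dot P(x) = \sum_{i \in [m]} \frac{\mathrm{Cov}_{\mu_i}\bigl(g^i, (g^i)^\alpha\bigr)}{\sum_{w \in V^i} x_w^i H_w^i(x)},$$
where $\mu_i$ is the probability on $V^i$ that weights $v$ by $x_v^i$. Since \eqref{eqn:eta} forces $g_v^i > 0$ on $\D$ and $t \mapsto t^\alpha$ is strictly increasing on $(0,\infty)$, each covariance is non-negative and vanishes precisely when $g^i$ is $\mu_i$-a.s.\ constant, i.e., when $x \in \Fix(\pi)$. Hence $P$ is a strict Lyapunov function, the standard criterion places every ICT set inside $\Fix(\pi)$, and combining this with the connectedness of $\limset$ from $(iii)$ and the finiteness of $\Fix(\pi)$ forces $\limset$ to be a single point almost surely.

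The technical heart of the argument is the Lyapunov construction in $(iv)$: discovering $P$ uses both the symmetry \eqref{eqn:rho} (so that $g^i = \nabla_{x^i} P$) and the common exponent $\alpha$ in $H_v^i$ (so that a monotone-function/covariance argument yields a definite sign). The remainder is bookkeeping: checking the SA hypotheses, the forward invariance of $\D$ under the flow, the strict positivity of $g_v^i$ on $\D$ given \eqref{eqn:eta}, and the rigorous passage from ``strict Lyapunov function'' to ``ICT sets contained in equilibria''.
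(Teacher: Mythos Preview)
Your proof is correct and follows the same overall architecture as the paper: the algebraic characterisation for $(i)$, the stochastic-approximation/ODE method of Bena\"im for $(iii)$, and the same quadratic potential $P=-L$ as a strict Lyapunov function for $(iv)$. Two points of genuine difference are worth noting. For $(ii)$ you invoke Brouwer on the continuous self-map $\pi:\D\to\D$, whereas the paper exhibits an explicit fixed point by taking each $S^i$ to be a singleton; both are valid, yours is quicker, theirs is constructive. More interestingly, for the strict monotonicity of the Lyapunov function you compute
\[
  \dot P(x)=\sum_{i\in[m]}\frac{\mathrm{Cov}_{\mu_i}\bigl(g^i,(g^i)^\alpha\bigr)}{N^i(x)}
\]
and finish with the elementary fact that $\mathrm{Cov}(Y,f(Y))\ge 0$ for $f$ strictly increasing, with equality iff $Y$ is a.s.\ constant under $\mu_i$. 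The paper instead rewrites $\langle\nabla L,F\rangle$ via $\varphi(z)=-z^{-1/\alpha}$ and the reduced rate matrices $\tilde\Gamma^i(x)$, and then appeals to a spectral-gap inequality (Lemma~4 of Bena\"im, 2015) to bound each summand. Your covariance argument is considerably shorter and fully self-contained; the paper's route is heavier but embeds the computation in a general entropy-dissipation framework for reinforced processes that does not rely on the specific power form $H=(g)^\alpha$.
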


Our next result is perhaps the more general possible. It actually
shows that the convergence to some point of $\Fix(\pi)$ is the generic 
behaviour.

\begin{corollary}\label{cor:conv_is_generic}
Let $X$ be defined by \eqref{eqn:X} and $\pi$ by
\eqref{eqn:THE_pi}. For almost all choices of $\eta_v^i$, $\alpha$ and
$\rpar_v^{ij}$ subjected to \eqref{eqn:rho} and \eqref{eqn:eta}, $X$
converges almost surely to some point in $\Fix(\pi)$.
\end{corollary}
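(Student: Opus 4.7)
The plan is to reduce the claim to Theorem~\ref{th:det}$(iv)$ by showing that the subset of parameters $(\eta, \alpha, \rpar)$ satisfying \eqref{eqn:rho} and \eqref{eqn:eta} for which $\det R_\rpar(S) = 0$ for some $S \in \V$ has Lebesgue measure zero. The constraint \eqref{eqn:rho} cuts out a linear subspace $\Theta_\rpar$ of the Euclidean space of interaction parameters, and for each $\rpar \in \Theta_\rpar$ the condition \eqref{eqn:eta} carves out a nonempty open set of admissible $\eta$; together with $\alpha \in (0,\infty)$ the parameter space is an open subset of a Euclidean space equipped with its natural Lebesgue measure. A key observation is that the entries of $R_\rpar(S)$ are linear in $\rpar$ and involve neither $\eta$ nor $\alpha$, so $P_S(\rpar) := \det R_\rpar(S)$ is a polynomial on $\Theta_\rpar$. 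Moreover, in the decomposition \eqref{eqn:Fix} any $S$ with some $S^i = \emptyset$ contributes $\D(S) = \emptyset$ (the corresponding simplex equation becomes $0 = 1$), so it suffices to show that $P_S$ is not identically zero on $\Theta_\rpar$ for every $S$ with $S^i \neq \emptyset$ for all $i$.

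I would verify this by evaluating $P_S$ at the symmetric self-interaction point $\rpar_v^{ij} = \delta_{ij}$, which lies in $\Theta_\rpar$ and is compatible with any positive $\eta$ via \eqref{eqn:eta}. With this choice the system \eqref{eqn:THE_system} decouples across the index $i$, so $R_\rpar(S)$ is block diagonal; after placing the coordinates of $S^i$ before those of $V^i \setminus S^i$, the $i$-th block further splits into a $(d^i - |S^i|) \times (d^i - |S^i|)$ identity block (from the equations $x_u^i = 0$) and an $|S^i| \times |S^i|$ matrix $M_i$ formed by $|S^i|-1$ independent linking rows together with the simplex row of ones. A short computation (adding the linking rows to the simplex row and then expanding along it) yields $\det M_i = |S^i|$, so
\[
  P_S(\rpar) \;=\; \prod_{i \in [m]} |S^i| \;>\; 0
\]
whenever every $S^i$ is nonempty, showing that $P_S$ is a nonzero polynomial on $\Theta_\rpar$.

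The zero set of each such $P_S$ has Lebesgue measure zero in $\Theta_\rpar$, and since $\V$ is finite (being built from subsets of the finite set $V$), the union $\bigcup_{S \in \V} \{P_S = 0\}$ remains negligible. By Fubini this lifts to a measure-zero set in the full parameter space, and outside of it Theorem~\ref{th:det}$(iv)$ yields the almost sure convergence of $X$ to some element of $\Fix(\pi)$. The only substantive step is the determinant computation above; everything else is the standard fact that a non-identically-zero polynomial on Euclidean space vanishes on a null set, together with routine bookkeeping to dismiss the $S$'s with some $S^i = \emptyset$.
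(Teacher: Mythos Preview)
Your proof is correct and follows the same high-level strategy as the paper: show that the parameters for which $\det R_\rpar(S)=0$ for some $S\in\V$ form a Lebesgue-null set, then invoke Theorem~\ref{th:det}$(iv)$. The difference lies in how the null-set claim is justified. The paper asserts that $\rpar\mapsto R_\rpar(S)$ is a bijection onto the space of $d\times d$ symmetric matrices and then appeals to the fact that singular matrices form a null set there. You instead observe that $P_S(\rpar)=\det R_\rpar(S)$ is a polynomial on the linear subspace $\Theta_\rpar$ cut out by \eqref{eqn:rho}, and exhibit an explicit witness $\rpar_v^{ij}=\delta_{ij}$ at which the block-diagonal structure gives $P_S=\prod_i|S^i|>0$. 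Your route is the more robust one: since $R_\rpar(S)$ carries many fixed $0$'s and $1$'s (from the simplex row and the rows $x_u^i=0$), the map $\rpar\mapsto R_\rpar(S)$ is certainly not surjective onto all symmetric $d\times d$ matrices, so the paper's bijection claim is at best heuristic and really requires exactly the kind of non-triviality check you supply. You also explicitly dispose of the sets $S$ with some $S^i=\emptyset$ (where the system is inconsistent and $\D(S)=\emptyset$), a point the paper leaves implicit.
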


Although for almost all parameters, the set $\Fix(\pi)$ is finite,
there are examples in which some of the connected components of
$\Fix(\pi)$ are continua, i.e., compact connected subsets of $\D$.
Corollary~\ref{cor:emptyintesection}.\eqref{enu:C1} bellow provides in
particular one such example.  The following notation will be used
in Corollary~\ref{cor:emptyintesection} and throughout rest of
this article. For any $x \in \D$ with coordinates $x = (x^1, \ldots,
x^m)$, let $\Supp{x} = \Supp{x^1} \times \cdots \times \Supp{x^m}$
where
\[
 \Supp{x^i} = \big\{ v \in V^i : x^i_v > 0\big\}.
\]
The functions $\Supp{x}$ and $\Supp{x^i}$ denote respectively the
supports of $x$ and $x^i$.

\begin{corollary}\label{cor:emptyintesection}
Let $\pi$ be given as in \eqref{eqn:THE_pi}, and $x \in \D$ be such
that $ \Supp{x^i} \cap \Supp{x^j} = \varnothing$ for all $i, j \in
[m]$ with $i \neq j$ and $\eta_v^i = \eta_w^i$ for all $v, w
\in\Supp{x^i}$ and $i \in [m]$. Then, $x \in \Fix(\pi)$ if, and only
if, for every $i\in [m]$ one of the following conditions hold:
\begin{enumerate}[label=$(C_{\arabic*})$, ref=$C_{\arabic*}$, nosep,
  leftmargin=1cm]   
\item 
$\Supp{x^i} \subset \big\{ v \in V^i :
\rpar_v^{ii} = 0\big\}$,\label{enu:C1}
\item $\Supp{x^i} \subset \big\{ v \in V^i :
\rpar_v^{ii} > 0\big\}$ \, or \, $\Supp{x^i} \subset \big\{ v \in V^i :
\rpar_v^{ii} < 0\big\}$, \, and
\begin{equation}\label{eqn:explicit}
  x_v^i = \frac{1}{\rpar_v^{ii}}\left(\sum_{w \in \Supp{x^i}}
    \frac{1}{\rpar_w^{ii}}\right)^{-1}\quad \text{for all}\ \ v \in
  \Supp{x^i}.
\end{equation} 
\label{enu:C2}
\end{enumerate}
\end{corollary}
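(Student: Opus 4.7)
The plan is to apply Theorem~\ref{th:det}$(i)$ directly: since $\Fix(\pi) = \bigcup_{S \in \V} \D(S)$, the point $x$ lies in $\Fix(\pi)$ if and only if it satisfies the linear system \eqref{eqn:THE_system} with $S^i = \Supp{x^i}$ for each $i \in [m]$. The entire argument then reduces to simplifying that system under the two standing hypotheses and reading off the solutions.

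First I would observe that the disjoint-support assumption collapses each coupling sum. Indeed, for any $i \in [m]$ and any $v \in \Supp{x^i}$, and any $j \in I_v$ with $j \neq i$, the vertex $v$ lies in $V^j$ but not in $\Supp{x^j}$, hence $x_v^j = 0$. Therefore
\[
 \sum_{j \in I_v} \rpar_v^{ij} x_v^j = \rpar_v^{ii}\, x_v^i \qquad \text{for every } v \in \Supp{x^i}.
\]
Substituting this into the first line of \eqref{eqn:THE_system} and using $\eta_v^i = \eta_w^i$ for all $v, w \in \Supp{x^i}$, the fixed-point condition on the $i$-th block simplifies to the existence of a constant $c_i \in \R$ such that
\[
 \rpar_v^{ii}\, x_v^i = c_i \qquad \text{for all } v \in \Supp{x^i},
\]
together with the normalisation $\sum_{v \in \Supp{x^i}} x_v^i = 1$ and the positivity $x_v^i > 0$ on $\Supp{x^i}$.

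Next I would do a simple trichotomy on the constant $c_i$. If $c_i = 0$, then $\rpar_v^{ii}\, x_v^i = 0$ together with $x_v^i > 0$ forces $\rpar_v^{ii} = 0$ for every $v \in \Supp{x^i}$, which is exactly $(C_1)$; conversely, if $(C_1)$ holds the simplified equations reduce to $0 = 0$ and any positive probability vector on $\Supp{x^i}$ works. If $c_i \neq 0$, then $\rpar_v^{ii} = c_i / x_v^i$ has the same sign as $c_i$ for every $v \in \Supp{x^i}$, so the self-interaction parameters on $\Supp{x^i}$ are either all strictly positive or all strictly negative; solving $x_v^i = c_i / \rpar_v^{ii}$ and enforcing $\sum_{v \in \Supp{x^i}} x_v^i = 1$ determines $c_i = \bigl(\sum_{w \in \Supp{x^i}} 1/\rpar_w^{ii}\bigr)^{-1}$ and yields the explicit formula \eqref{eqn:explicit}, i.e.\ $(C_2)$. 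A ruling-out step is needed to exclude a mixture of zero and nonzero $\rpar_v^{ii}$ on a single $\Supp{x^i}$, but this is immediate: a single $v$ with $\rpar_v^{ii} = 0$ forces $c_i = 0$ and drags all other $\rpar_w^{ii}$ to zero. Finally, the converse direction (that $(C_1)$ or $(C_2)$ for every $i$ yields $x \in \Fix(\pi)$) follows by plugging the described $x_v^i$ back into the simplified system.

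There is no real obstacle here: once Theorem~\ref{th:det}$(i)$ is available, the disjoint-support hypothesis decouples the linear system across indices $i$ and eliminates all cross-terms, and the equal-$\eta$ hypothesis eliminates the inhomogeneity, leaving a one-parameter family of equations $\rpar_v^{ii} x_v^i = c_i$ whose positive, normalised solutions are exactly those enumerated in $(C_1)$ and $(C_2)$.
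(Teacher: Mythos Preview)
Your proposal is correct and follows essentially the same route as the paper's own proof: both invoke Theorem~\ref{th:det}$(i)$, use the disjoint-support and equal-$\eta$ hypotheses to reduce \eqref{eqn:THE_system} to the decoupled equations $\rpar_v^{ii}x_v^i = c_i$ on each $\Supp{x^i}$, and then read off $(C_1)$ or $(C_2)$ by a case analysis on whether the common value $c_i$ vanishes. The only cosmetic difference is that the paper phrases the case split as ``if $(C_1)$ fails then some $\rpar_w^{ii}\neq 0$'' whereas you split on the sign of $c_i$, but the logic is identical.
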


The condition $\eta_v^i = \eta_w^i$ in
Corollary~\ref{cor:emptyintesection} can be removed by adapting
\eqref{eqn:explicit}. The new expression for $x_v^i$ in
\eqref{eqn:explicit} 
is however more involved.

An explicit example of a continuum in $\Fix(\pi)$ is obtained by
considering two walks $W^1$ and $W^2$ on the complete three vertex
graph $G$, where $G^1 = G^2 = G$ and $\eta_v^i = \rpar_v^{ii} = 0$ for
all $i \in \{1,2\}$ and $v \in V=\{1,2,3\}$.  Observe that every $x
\in \D$ for which $\Supp{x^1} \subset \{1,2\}$ and $\Supp{x^2} =
\{3\}$ satisfies the condition \eqref{enu:C1} of
Corollary~\ref{cor:emptyintesection}. Since \eqref{enu:C1} does not
impose any restriction on the specific values of $x_v^i$, it is simple
to verify that the set $\{(a, 1-a, 0, 0, 0, 1) : a \in [0,1]\} \subset
\D$ is a continuum component of $\Fix(\pi)$. This situation is
depicted in Figure~\ref{fig:triangle}, where $W^1$ occupies the two
blue vertices at proportions $a$ and $1-a$ and $W^2$ is confined at
the green vertex.

\begin{figure}[h!]
  \begin{center}
  \begin{tikzpicture}[line width=0.65pt]
    \graph[nodes={draw, circle, inner sep=1pt, minimum
      size=2.5mm, line width=0.65pt}, 
      empty nodes, n=3, clockwise, radius=1.75cm] { 
      1[fill=blue!50]; 2[fill=green!50]; 3[fill=blue!50];
      subgraph K_n
    };
    \node[] at (-1.9, -1) {$a$};
    \node[] at (0.66, 1.9) {$1-a$};
  \end{tikzpicture} 
   \caption{A continuum component in $\Fix(\pi)$ for two competing
    walks on the tree vertex complete graph.} 
  \label{fig:triangle}
  \end{center}
\end{figure}
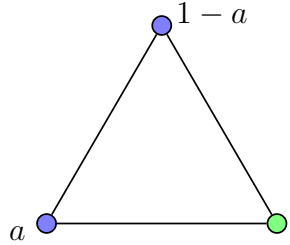

In general, the convergence of $X$ to a single point of a continuum
component of $\Fix(\pi)$ is an open question.
%
%

Theorems~\ref{th:no-toface} and \ref{th:no-tointerior} bellow provide
verifiable conditions to discard the convergence of $X$ to certain
points in $\D$. Both of these are instrumental in the
analysis of the examples presented in subsequent sections. Let
$\D^\circ$ be the interior of $\D$, that is $\D^\circ = \big\{x \in
\D\ \big|\ x_v^i > 0,\ v \in V^i, i \in [m]\big\}$ and let $\partial
\D=\D{\setminus}\D^\circ$ denote the boundary of $\D$.

\begin{theorem}[Non-convergence to boundary
  points]\label{th:no-toface}
Let $X$ be defined by \eqref{eqn:X} and $\pi$ by
\eqref{eqn:THE_pi}. Let $p \in \partial\D$ be such that
\begin{equation} \label{eqn:abar}
   p_k^i = 0 \quad \text{and} \quad \lim_{x \to p} \pi_k^i(x)/x_k^i > 
   1 \quad\text{for some}\quad i \in [m] \quad \text{and}
   \quad k \in V^i.
\end{equation}
Then
\[
   \Prb\Big(\lim_{n \to \infty} X(n) = p\Big) = 0. 
\]
\end{theorem}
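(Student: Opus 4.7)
My plan is to argue by contradiction, adapting the Pemantle-type non-convergence technique from \cite{P90}. Assume for contradiction that $A := \{\lim_n X(n) = p\}$ has positive probability. The hypothesis provides $\epsilon > 0$ and an open neighborhood $B$ of $p$ in $\D$ such that $\pi_k^i(x)/x_k^i \geq 1 + \epsilon$ for all $x \in B$. Since $A$ is contained in the increasing union $\bigcup_{N \geq 1} B_N$ with $B_N := \{X(n) \in B \text{ for all } n \geq N\}$, there exists $N$ with $\Prb(B_N) > 0$, on which I will work. Setting $Y_n := X_k^i(n)$ and $\gamma_n := 1/(d^i+n+1)$, the occupation-measure recursion \eqref{eqn:occupation_measure} expresses $Y_n$ as a stochastic approximation,
\[
  Y_{n+1} \;=\; Y_n + \gamma_n\bigl(\pi_k^i(X(n)) - Y_n\bigr) + \gamma_n \xi_{n+1},
\]
where $\xi_{n+1} := \Ind\{W^i(n+1) = k\} - \pi_k^i(X(n))$ is a bounded martingale difference. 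On $\{X(n) \in B\}$, this yields the pathwise multiplicative lower bound $Y_{n+1} \geq Y_n(1+\epsilon\gamma_n) + \gamma_n \xi_{n+1}$.

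Iterating this bound from $n = N$ on $B_N$ and dividing by the deterministic product $a_n := \prod_{j=N}^{n-1}(1+\epsilon\gamma_j) \asymp (n/N)^\epsilon$, I obtain
\[
  \frac{Y_n}{a_n} \;\geq\; Y_N + M_n, \qquad M_n := \sum_{j=N}^{n-1}\frac{\gamma_j\, \xi_{j+1}}{a_{j+1}}.
\]
Since $|\xi_{j+1}|\leq 1$ and $(\gamma_j/a_{j+1})^2 \asymp N^{2\epsilon}/j^{2+2\epsilon}$ is summable, $(M_n)$ is an $L^2$-bounded martingale converging almost surely to some $M_\infty$. Because $Y_n \leq 1$ while $a_n \to \infty$, we must have $Y_n/a_n \to 0$, forcing the pathwise constraint $M_\infty \leq -Y_N$ on $B_N$.

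The crux, and main obstacle, is to promote this constraint into a contradiction. The difficulty is that $\Var(\xi_{n+1}|\F_n) = \pi_k^i(X(n))(1-\pi_k^i(X(n)))$ vanishes as $X(n) \to p$, so classical non-convergence theorems requiring non-degenerate noise in the unstable direction do not apply directly. Following \cite{P90}, I exploit the scale-sensitive bound $\Var(\xi_{n+1}|\F_n) \leq (1+\epsilon)\operatorname{diam}(B)$ valid on $B$: applied to the stopped version of $M_n$, this gives $\E[M_\infty^2 \Ind_{B_N}] \leq C\operatorname{diam}(B)/N$, making the martingale fluctuations controllably small. A Pemantle-style escape-probability argument then shows that, starting from any point in a sufficiently small neighborhood of $p$, the conditional probability of $X$ eventually exiting the neighborhood is bounded below by a positive constant. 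Since $\{X \to p\}$ requires $X$ to remain eventually in every such neighborhood, a conditional Borel--Cantelli argument forces $X$ to exit infinitely often almost surely, contradicting convergence to $p$. Hence $\Prb(A) = 0$.
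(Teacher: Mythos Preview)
Your first two paragraphs are correct and set up a natural multiplicative-growth inequality: on $B_N$ you indeed obtain $Y_n/a_n \geq Y_N + M_n$ with $(M_n)$ an $L^2$-bounded martingale and $a_n \asymp (n/N)^\epsilon$, forcing the pathwise constraint $M_\infty \leq -Y_N$ on $B_N$. The problem lies entirely in the third paragraph, where you claim to close the argument. The constraint $M_\infty \leq -Y_N$ is too weak to yield a contradiction directly, because $Y_N$ can be as small as $1/(d^i+N)$ while $\Var(M_\infty\mid\F_N) \lesssim 1/N$; a Chebyshev bound on $\Prb(M_\infty \leq -Y_N\mid\F_N)$ is therefore vacuous. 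Your appeal to a ``Pemantle-style escape-probability argument'' from \cite{P90} is misplaced: that argument requires a uniform \emph{lower} bound on the noise (condition~$(h_1)$ of Theorem~\ref{th:pemantle_interior}), which is precisely what fails here, as you yourself note. The variance inequality $\Var(\xi_{n+1}\mid\F_n)\leq C\operatorname{diam}(B)$ is an \emph{upper} bound and cannot drive an escape argument. The subsequent conditional Borel--Cantelli step is never substantiated: you do not produce a sequence of events with conditional probabilities bounded away from zero. As written, the proof stops exactly at the hard step.

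The paper avoids this difficulty by a genuinely different mechanism, adapted from \cite{P92} rather than \cite{P90}. It works over geometric time blocks $T_{n,\delta}=\{n,\dots,\lfloor n(1+\delta)\rfloor-1\}$ and reduces to a general statement about Bernoulli sequences (Lemma~\ref{lem:b.r.v}): if $\Prb(\Ber(m+1)=1\mid\F_m)\geq \bar\Ber(n)(1+b)$ for $m\in T_{n,\delta}$ on an event of positive probability, a contradiction follows. The key is to track $\E[\log\bar\Ber(n_\tau)\,\Ind_A]$ along the block endpoints $n_{\tau+1}=\lfloor n_\tau(1+\delta)\rfloor$ and show it increases by a fixed positive amount each step, which is impossible since $\log\bar\Ber\leq 0$. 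Two ingredients make this work where your linear-scale martingale does not: a stochastic-dominance comparison with i.i.d.\ Bernoulli variables inside each block (so that Chebyshev applies to a binomial with mean of order~$1$, not a degenerate martingale), and a separate second Borel--Cantelli argument exploiting the deterministic lower bound $\bar\Ber(n)\geq 1/(q+n)$ to show the walk visits vertex $k$ infinitely often on $A$. The log scale is essential because it converts the multiplicative drift into an additive one whose increment has a positive lower bound independent of the current (small) value of $Y_n$.
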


Let $D\pi(x)$ be the Jacobian matrix of $\pi$ at $x$ and let
$\sigma\big(D\pi(x)\big)$ denote its spectrum. For any $\lambda \in
\mathbb{C}$, write $\mathfrak{R}(\lambda)$ for the real part of
$\lambda$.

\begin{theorem}[Non-convergence to interior points]
\label{th:no-tointerior} 
Let $X$ be defined by \eqref{eqn:X} and $\pi: \D\to \D$ by
\eqref{eqn:THE_pi}. Assume that $p \in \Fix(\pi)$, $p \in \iD$. If
$\mathfrak{R}(\lambda) \neq 1$ for all $\lambda \in
\sigma\big(D\pi(p)\big)$ and $\mathfrak{R}(\lambda) > 1$ for at least
one $\lambda \in \sigma\big(D\pi(p)\big)$, then
\[
   \Prb\Big(\lim_{n\to\infty} X(n) = p\Big) = 0.
\]
\end{theorem}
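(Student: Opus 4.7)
The plan is to view $\{X(n)\}$ as a Robbins--Monro stochastic approximation and to apply the classical non-convergence principle for linearly unstable hyperbolic equilibria, in the spirit of the arguments developed in \cite{P90,P92}. A direct computation from \eqref{eqn:occupation_measure} gives the recursion
\begin{equation*}
X^i_v(n+1)-X^i_v(n)=\frac{1}{d^i+n+1}\Bigl(\pi^i_v(X(n))-X^i_v(n)+U^i_{n+1,v}\Bigr),
\end{equation*}
with bounded martingale increments $U^i_{n+1,v}=\Ind\{W^i(n+1)=v\}-\pi^i_v(X(n))$. Hence $X$ is an SA scheme with step size of order $1/n$ and deterministic mean field $F(x)=\pi(x)-x$ on $\D$. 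Because $p\in\iD$, the map $\pi$ is $C^\infty$ near $p$, and $p\in\Fix(\pi)$ makes $p$ an equilibrium of $F$.

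The next step is to translate the spectral hypothesis into information about the mean-field linearisation. Since $\sum_{v\in V^i}\pi^i_v(x)\equiv 1$ for every $i$, the image of $D\pi(p)$ is contained in the tangent space $T=T_p\D$; consequently $T$ is $D\pi(p)$-invariant and the non-zero part of $\sigma(D\pi(p))$ coincides with $\sigma(D\pi(p)|_T)$, the remaining eigenvalues being zero and coming from the $m$-dimensional normal directions transverse to $\D$. The assumption $\mathfrak R(\lambda)\neq 1$ for every $\lambda\in\sigma(D\pi(p))$ therefore makes $DF(p)|_T=D\pi(p)|_T-I$ hyperbolic, while the existence of some $\lambda$ with $\mathfrak R(\lambda)>1$ yields a non-trivial unstable subspace $E^u\subset T$ separated from the stable one by a definite spectral gap. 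At this stage the general non-convergence theorem for stochastic approximations at hyperbolic unstable equilibria would give $\Prb(\lim_n X(n)=p)=0$, provided that the martingale noise $U_{n+1}$ projects in a uniformly non-degenerate way onto $E^u$.

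The noise non-degeneracy is the main technical step and the part I expect to require the most care. Since the walks $W^1(n+1),\ldots,W^m(n+1)$ are conditionally independent given $\F_n$, the conditional covariance of $U_{n+1}$ is block-diagonal in $i$, with the $i$-th block equal to the multinomial covariance $\operatorname{diag}(\pi^i(X(n)))-\pi^i(X(n))\pi^i(X(n))^{\top}$. Viewed on the tangent space of $\triangle^i$, this block is positive definite as soon as $\pi^i_v(X(n))>0$ for every $v\in V^i$; since $p\in\iD$ gives $\pi^i_v(p)=p^i_v>0$, continuity supplies a uniform lower bound on its smallest positive eigenvalue throughout a neighbourhood of $p$, and hence a uniform positive lower bound on $\E[\langle u,U_{n+1}\rangle^2\mid\F_n]$ for every unit vector $u\in E^u$. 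Feeding this into Pemantle's non-convergence argument (the iterated-logarithm lower bound of \cite{P92} adapted to the present multidimensional setting, or equivalently the stable/unstable manifold construction of \cite{P90}) rules out convergence of $X$ to $p$. The delicate point is the bookkeeping needed to reconcile the component-dependent step sizes $1/(d^i+n+1)$ and to verify carefully that the unstable subspace $E^u$ really lies inside $T$ rather than being hidden in the irrelevant normal component of $\R^d$.
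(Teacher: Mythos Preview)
Your proposal is correct and follows essentially the same route as the paper: cast $X$ as a stochastic approximation with mean field $F=\pi-\mathrm{id}$, translate the spectral hypothesis on $D\pi(p)$ into hyperbolicity plus an unstable direction for $DF(p)$, and invoke Pemantle's non-convergence theorem after checking noise non-degeneracy near the interior point $p$. The paper phrases the noise condition as a lower bound on $\E[\max\{\langle\theta,U(n)\rangle,0\}\mid X(n)=x]$ (condition $(h_1)$ of its Theorem~\ref{th:pemantle_interior}) and delegates the verification to \cite[Lemma~9]{RPP22}, whereas you argue via positive definiteness of the multinomial covariance on $T_p\D$; both hinge on the same two ingredients, namely conditional independence of the $W^i(n+1)$ and strict positivity of $\pi(x)$ in a neighbourhood of $p\in\iD$, so the difference is cosmetic. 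Your tangent-space bookkeeping is more explicit than the paper's, but note that the paper sidesteps the issue entirely by working with the full Jacobian: since the zero eigenvalues of $D\pi(p)$ coming from the normal directions satisfy $\mathfrak R(0)\neq 1$ automatically, the hypothesis already forces $DF(p)$ to be hyperbolic on all of $\R^d$, and the unstable eigenvector with $\mathfrak R(\lambda)>1$ necessarily lies in $T_p\D$ because the image of $D\pi(p)$ does.
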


\section{Examples}\label{sec:examples}
This section presents several examples which illustrate previous
results in some concrete cases. The examples described here constitute
a small sample of the possibilities, far from being exhaustive in any
sense. They include interacting walks on a finite complete graph and
on complete subgraphs of star graphs and cycles. These were chosen
because of tractability and because we found them to be
interesting. We focus on a relatively simple version of the
competitive dynamics of our model where 
\begin{equation}\label{eqn:rhoij}
  \rpar_v^{ij} = -1 \quad \text{and} \quad \rpar_v^{ii} = -\epsilon,
  \quad  \text{where} \quad \epsilon \geq 0, 
\end{equation}
$v \in V$, $i, j \in I_v$ and $i \neq j$, recalling that  $I_v$ is
defined in \eqref{eqn:Iv}. 

We investigate the effect of $\epsilon>0$ versus $\epsilon=0$. As we
shall see, the limit properties of $X$ depend critically on the value
of $\epsilon$. In all but one example, we also assume that all the
vertices are `equally important' to each walk, namely
\begin{equation}\label{eqn:etaiv}
 \eta_v^i = \eta_w^j \quad \text{for all} \quad v,w \in V, \quad i  
 \in I_v \,\,   \text{ and } \,\,  j \in I_w.
\end{equation}

The following notation and definitions will be used to describe the
results throughout. Let $\ducal H:\D \to \R$ be defined by
\begin{equation}
  \label{eqn:H}
  \ducal H(x) = \sum_{i<j}\sum_v x_v^i x_v^j.
\end{equation}
The random variable $\ducal H\big(X(n)\big)$ corresponds to the
overlap measure of the vertex occupation process $X$ defined by the
random walks $W^i$, $i\in [m]$, up to time $n$. 

\subsection{Complete graphs}
Let $W^1$ and $W^2$ be two random walks sharing all of $K_\kappa$, the
complete graph with $\kappa \geq 2$ vertices. This is the case in
which both the subgraphs of $W^1$ and $W^2$ are equal to $K_\kappa$,
i.e.  $G^1=G^2=K_\kappa$ with $d^1 = d^2 = \kappa$.  The restriction
to $m = 2$ random walks is considered to simplify the exposition
throughout. This can be relaxed to $m > 2$ walks as explained in
Remark \ref{general} in the end of this section.

The results described in this section are similar in spirit to those
in \cite{C14}, who considered two repelling walks on complete graphs
evolving according to the transition probability
\[
   \Prb\big(W^i(n+1) = v \mid \F_n\big)
   \propto
   \frac{1}{\max\! \big\{X_v^j(n), \delta\big\}^\alpha},
\]
where $\alpha>0$, $\delta>0$. Similarly as in the case of competitive
dynamics considered here, see \eqref{eqn:trans_prob} and
\eqref{eqn:pi} with $\rpar_v^{ij} < 0$, the probability of a
transition to vertex $v$ by one walk decreases with the proportion of
visits to that vertex by the other walk. The principal result in
\cite{C14} concerns the limiting behaviour of the overlap measure of
the joint vertex occupation process, $\ducal{H}(X(n))$. For
sufficiently large $\alpha$, \cite{C14} shows that $\limsup_n
\ducal{H}(X(n)) < \delta$ holds almost surely. By virtue of
Theorem~\ref{th:det}, we show here that $X$ does indeed converges for
almost all values of $\eta_v^i$, $\rpar_v^{ij}$, $\alpha$, and as a
consequence $\ducal{H}(X(n))$ also does.  Further, here we are able to
identify the points to which $X$ converges to. As an example,
Theorem~\ref{th:complete_G_epsilon} bellow gives an explicit
description of the limit points of $X$ assuming \eqref{eqn:rhoij} and
\eqref{eqn:etaiv} for $\epsilon > 0$.  Theorem~\ref{th:complete_G}
also gives a complete description of the limit set according to which
the walks coordinate themselves to visit different vertices when
$\epsilon = 0$. As a side result, it holds almost surely that $\lim_n
\ducal{H}(n)$ always exists. More precisely, in the latter case, when
$\epsilon = 0$, it follows that $\lim_n \ducal{H}(n) = 0$ whereas in
the former, when $\epsilon > 0$, we have that $\lim_n \ducal{H}(n) <
\epsilon$ holds for sufficiently small $\epsilon$.

Before stating the main results of this section,
Figure~\ref{fig:complete} displays two representative elements of the
limit set of the process $X$ on the complete graphs $K_5$ and
$K_9$. White vertices are left unoccupied, green vertices are occupied
by one walk and the ones in blue by the other walk.
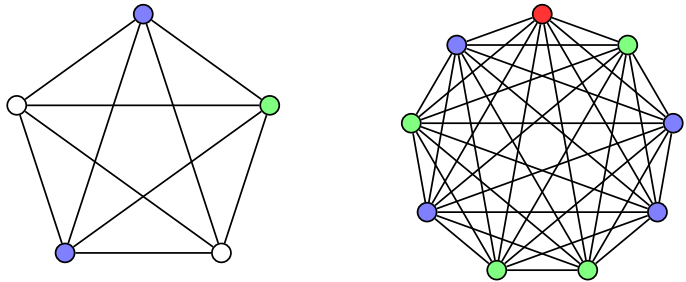
\begin{figure}[h!]
{\centering
\begin{tikzpicture}[line width=0.65pt]
  \begin{scope}[shift={(0,0)}]
    \graph[nodes={draw, circle, inner sep=1pt, minimum
      size=2.5mm, line width=0.65pt}, 
      empty nodes, n=5, clockwise, radius=1.75cm] { 
      1[fill=blue!50]; 2[fill=green!50]; 3[draw];
      4[fill=blue!50]; 5[draw]; 
      subgraph K_n
    };
  \end{scope}
  \begin{scope}[shift={(5.25,0)}]
    \graph[nodes={draw, circle, inner sep=1pt, minimum
      size=2.5mm, line width=0.65pt}, 
      empty nodes, n=9, clockwise, radius=1.75cm] {
      1[fill=red!80];
      2[fill=green!50]; 3[fill=blue!50];
      4[fill=blue!50]; 5[fill=green!50]; 6[fill=green!50];
      7[fill=blue!50]; 8[fill=green!50]; 9[fill=blue!50];
      subgraph K_n
    };
  \end{scope}
\end{tikzpicture}
\par}
\caption{Possible limits for the vertex occupation by two
  walks on $K_5$ (left) and $K_9$.
}\label{fig:complete}
\end{figure}
More precisely, a possible limit on $K_5$, assuming \eqref{eqn:rhoij}
and \eqref{eqn:etaiv} for $\epsilon = 0$, is determined by setting the
occupation of the two blue vertices respectively to $a$ and $1-a$
where $a \in (0,1)$. The occupation depicted in $K_9$ is an instance
of the limits obtained for $\epsilon = \frac{4}{20}$. Blue and green
vertices are each occupied at a proportion equal to $\frac{6}{25}$
while the red vertex, shared by both walks, is occupied at a
proportion equal to $\frac{1}{25}$.

\begin{theorem}[Complete graphs, $\epsilon = 0$]
\label{th:complete_G} 
Let $K_\kappa$ be the complete graph with $\kappa\geq 2$ vertices and
let $W^1(n)$, $W^2(n)$ be two random walks on $K_\kappa$, with
transition probabilities defined according to
\eqref{eqn:THE_pi}. Suppose \eqref{eqn:rhoij} and \eqref{eqn:etaiv}
hold. If $\epsilon = 0$, then the vertex occupation measure
$X$, defined by \eqref{eqn:X}, converges almost surely towards the set
\begin{equation}\label{eqn:K}
 \eK = \Big\{
 (p^1, p^2) \in \D\ \Big|\ \Supp{p^1} \cap \Supp{p^2} = 
 \varnothing \Big\},
\end{equation}
that is, $\Prb(\limset \subset \eK) = 1$. 
\end{theorem}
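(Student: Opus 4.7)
The plan is to (i) determine $\Fix(\pi)$ explicitly, (ii) apply Corollary~\ref{ComulationConvergEx} to obtain $\Prb(\limset\subset\Fix(\pi))=1$, and (iii) combine Theorems~\ref{th:no-toface} and~\ref{th:no-tointerior} with the almost sure connectedness of $\limset$ (Theorem~\ref{th:det}(iii)) to discard the fixed points that do not lie in $\eK$. For (i), under \eqref{eqn:rhoij}--\eqref{eqn:etaiv} with $\epsilon=0$ and writing $\eta_v^i\equiv\eta$, the system~\eqref{eqn:THE_system} for a support pair $S=(S^1,S^2)$ reduces to the requirement that $x_v^j$ be constant in $v\in S^i$ (for $\{i,j\}=\{1,2\}$), together with $\sum_{\ell\in S^i}x_\ell^i=1$ and $x_u^i=0$ on $V{\setminus}S^i$. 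Combined with the positivity condition $x_v^i>0$ on $S^i$, a short case analysis shows that either $S^1\cap S^2=\varnothing$, in which case $\D(S)$ is the relative interior of the product of the simplices on $S^1$ and $S^2$, or $S^1=S^2=T$, in which case $\D(S)$ consists of the single diagonal point $p_T=(|T|^{-1}\Ind_T,|T|^{-1}\Ind_T)$. Hence by Theorem~\ref{th:det}(i),
\[
  \Fix(\pi)=\eK\cup F, \qquad F=\{p_T : \varnothing\neq T\subset V\}.
\]

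For (ii), the hypotheses of Corollary~\ref{ComulationConvergEx} are met: $\rpar_v^{ii}=0$ and the $\eta_v^i$ are uniform across $v$ and $i$; moreover, for each $S\in\Supp{\Fix(\pi)}$ with $J(S)\neq\varnothing$ the diagonal case is ruled out (there $\D(S)$ is a singleton, forcing $J(S)=\varnothing$), so $S^1\cap S^2=\varnothing$ and condition~$(i)$ of the corollary applies. This yields $\Prb(\limset\subset\Fix(\pi))=1$. For (iii), fix $p_T\in F$. If $T\subsetneq V$ then $p_T\in\partial\D$, and for any $u\notin T$ and $i\in\{1,2\}$ a direct evaluation using $H_v^i(p_T)=(\eta-p_v^j)^\alpha$ gives
\[
  \lim_{x\to p_T}\frac{\pi_u^i(x)}{x_u^i} = \frac{\eta^\alpha}{(\eta-1/|T|)^\alpha}>1,
\]
so Theorem~\ref{th:no-toface} gives $\Prb(X(n)\to p_T)=0$. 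If $T=V$, then $p_V\in\D^\circ$, and exploiting the vertex-permutation and walk-swap symmetries at $p_V$ the partial derivatives of $\pi$ assemble into
\[
  D\pi(p_V)=\begin{pmatrix} A & -cA \\ -cA & A \end{pmatrix},\qquad A=I-\kappa^{-1}Q,\quad c=\frac{\alpha}{\kappa\eta-1},
\]
where $Q$ is the $\kappa\times\kappa$ all-ones matrix. Decomposing $\R^{2\kappa}$ into the symmetric and antisymmetric subspaces spanned by vectors of the form $(v,v)$ and $(v,-v)$, and using $\sigma(A)=\{0,1\}$ with the eigenvalue $1$ of multiplicity $\kappa-1$, one obtains $\sigma\big(D\pi(p_V)\big)=\{0,\,1-c,\,1+c\}$. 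Since $c>0$, $1+c>1$ and no eigenvalue has real part equal to $1$, so Theorem~\ref{th:no-tointerior} gives $\Prb(X(n)\to p_V)=0$.

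To close, each $p_T\in F$ is isolated in $\Fix(\pi)$: any sequence in $\eK$ converging to $p_T$ would eventually place positive mass on $T$ in both walks, violating disjointness of supports. Hence $\{p_T\}$ is itself a connected component of $\Fix(\pi)$. Because $\limset$ is almost surely connected and almost surely contained in $\Fix(\pi)$, the event $\{p_T\in\limset\}$ coincides with $\{X(n)\to p_T\}$, which has probability zero by the previous step. Summing over the finite set $F$ yields $\Prb(\limset\subset\eK)=1$, as claimed. The main technical step is the Jacobian spectrum computation at the all-uniform interior point $p_V$; the remainder reduces to bookkeeping with the system~\eqref{eqn:THE_system} and routine applications of the machinery already in place.
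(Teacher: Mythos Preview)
Your proof is correct and follows essentially the same three-step strategy as the paper: identify $\Fix(\pi)=\eK\cup\{p_T:\varnothing\neq T\subset V\}$, eliminate each diagonal point $p_T$ via Theorem~\ref{th:no-toface} (for $T\subsetneq V$) or Theorem~\ref{th:no-tointerior} (for $T=V$), and conclude by connectedness of $\limset$. Two minor differences are worth noting. First, to obtain $\limset\subset\Fix(\pi)$ a.s.\ you invoke Corollary~\ref{ComulationConvergEx}, whereas the paper cites Theorem~\ref{th:det}$(iv)$; your choice is actually the right one here, since the determinant hypothesis in $(iv)$ fails when $\epsilon=0$ (the sets $\D(S)$ with $S^1\cap S^2=\varnothing$ are continua). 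Second, your Jacobian constant $c=\alpha/(\kappa\eta-1)$ is the general value; the paper's displayed constant $\alpha/(2\kappa-1)$ corresponds to a particular normalisation of $\eta$, but either way $c>0$ and the instability conclusion is identical.
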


\begin{corollary}\label{cor:overlap_complete_G_eps_zero}
Let $K_\kappa$ be the complete graph with $\kappa\geq 2$ vertices and
let $W^1(n)$, $W^2(n)$ be two random walks on $K_\kappa$, with
transition probabilities defined according to
\eqref{eqn:THE_pi}. Suppose \eqref{eqn:rhoij} and \eqref{eqn:etaiv}
hold. If $\epsilon = 0$, then
\[
  \Prb\Big(\lim_{n\to\infty} \ducal{H}(X(n)) = 0 \Big) = 1.
\]
\end{corollary}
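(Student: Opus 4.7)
The plan is to deduce the corollary directly from Theorem~\ref{th:complete_G} together with the continuity of $\ducal{H}$ on the compact set $\D$.

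First, I would observe that for $m=2$ the overlap simplifies to $\ducal{H}(x) = \sum_{v \in V} x_v^1 x_v^2$, and that this polynomial vanishes identically on $\eK$. Indeed, if $p = (p^1, p^2) \in \eK$, then $\Supp{p^1} \cap \Supp{p^2} = \varnothing$, so for every $v \in V$ at least one of $p_v^1$ or $p_v^2$ is zero; summing gives $\ducal{H}(p) = 0$. In particular, $\eK = \{p \in \D : \ducal{H}(p) = 0\}$ is closed in $\D$.

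Next, I would pass from the set-theoretic convergence of Theorem~\ref{th:complete_G} to numerical convergence of $\ducal{H}(X(n))$. Fix $\omega$ in the probability-one event $\{\limset \subset \eK\}$ provided by Theorem~\ref{th:complete_G}. Since $\D$ is compact, every subsequence of $\{X(n)\}$ admits a further subsequence $X(n_k)$ that converges to some $q \in \limset \subset \eK$. By continuity of $\ducal{H}$ and the vanishing on $\eK$ established in the previous paragraph,
\[
  \ducal{H}\bigl(X(n_k)\bigr) \longrightarrow \ducal{H}(q) = 0.
\]
Thus every subsequential limit of the bounded real sequence $\bigl\{\ducal{H}(X(n))\bigr\}$ equals $0$, which forces $\ducal{H}(X(n)) \to 0$. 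Since this holds on an event of probability one, the corollary follows.

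There is essentially no obstacle: the whole argument is a clean continuity-plus-compactness wrap-up of Theorem~\ref{th:complete_G}. The only point to be careful about is interpreting the statement of Theorem~\ref{th:complete_G}, namely that $X$ ``converges almost surely towards the set $\eK$'' is to be read as $\Prb(\limset \subset \eK) = 1$, exactly as written in the theorem.
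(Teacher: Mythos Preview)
Your proof is correct and follows the same approach as the paper, which simply notes that the result is an immediate consequence of $\Prb(\limset \subset \eK) = 1$ from Theorem~\ref{th:complete_G} together with the definition of $\ducal{H}$. You have just made explicit the compactness-and-continuity argument that the paper leaves implicit.
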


Although we can not say that $X$ converges almost surely to an
isolated point $x \in \D$ when $\epsilon = 0$, $X$ does converges to a
point with arbitrarily small vertex overlap measure if we allow
$\epsilon$ to be slightly positive. The next theorem identifies the
asymptotic form of the vertex occupation measure process $X$
for sufficiently small $\epsilon > 0$. For any $p=(p^1, p^2) \in \D$
and $i =1, 2$, let
\begin{equation}\label{eqn:sizes}
  \begin{gathered}
    U(p) = \Supp{p^1} \cap \Supp{p^2}, \qquad
    U^i(p) = \Supp{p^i}{\setminus}U(p)\\
    u(p) = |U(p)|,  \qquad s^i(p) = |\Supp{p^i}|, \quad\text{and}\quad 
    u^i(p) = s^i(p) - u(p).
  \end{gathered}
\end{equation}
To simplify notation, we will hereafter drop the dependence on $p$ of
the quantities in \eqref{eqn:sizes}.

\begin{theorem}[Complete graphs, $\epsilon > 0$]
\label{th:complete_G_epsilon}
Let $K_\kappa$ be the complete graph with $\kappa\geq 2$ vertices and
let $W^1(n)$, $W^2(n)$ be two random walks on $K_\kappa$, with
transition probabilities defined according to
\eqref{eqn:THE_pi}. Suppose \eqref{eqn:rhoij} and \eqref{eqn:etaiv}
hold.  Let $\eK$ be defined as in \eqref{eqn:K}, $\eK^c =
\D{\setminus}\eK$, and $\eK_1^c = \big \{ \, p \in \eK^c | \ u^1 > 0
\, \text{ and } \, u^2 > 0 \big \}$.  If $\epsilon > 0$, then, for
sufficiently small $\epsilon$, the vertex occupation measure $X$
converges almost surely towards a single point $p \in \D$ of one of
the following sets
\begin{gather} 
  \label{eqn:unifabitra}
  \tilde{\eK} =  \bigg \{ \, p \in \eK
     \ \bigg| \
       p_v^i = \frac{1}{s^i} \,\, \text{ for } \,\, v \in \Supp{p^i} \,
       \text{ and } \, i=1,2 \bigg \} \\
  \label{eqn:overlappted}
  \tilde{\eK}_1^c =
    \left\{ \, p \in \eK_1^c
     \ \bigg| \
        p_v^i =
        \begin{cases}
          \delta^i,& v \in U\\
          \bar{\delta}^i,&  v \in U^i
        \end{cases},
        \text{ for } \, i=1,2
     \right\},
\end{gather}
where for $j = 3 - i$,
\begin{equation}\label{eqn:deltaEbardelta}
  \delta^i = \frac{\epsilon(\epsilon s^j -
  u^i)}{\epsilon^2 s^i s^j - u^i u^j} 
  \quad \text{and} \quad 
  \bar \delta^i = \frac{1}{u^i} - \frac{u}{u^i} 
  \delta^i. 
\end{equation}
\end{theorem}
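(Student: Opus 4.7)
The plan is to combine Theorem~\ref{th:det}~$(iv)$, which reduces the question to convergence to a single point of $\Fix(\pi)$, with an explicit solution of the linear system~\eqref{eqn:THE_system} on each face of $\D$, and then to remove the unwanted fixed points using the non-convergence criteria of Theorems~\ref{th:no-toface} and~\ref{th:no-tointerior}.

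First, I would check that $\det R_\rpar(S)\neq 0$ for every $S\in\V$ when $\epsilon>0$ is sufficiently small. Since $R_\rpar(S)$ depends linearly on $\epsilon$, its determinant is a polynomial in $\epsilon$ whose zero set is finite, and direct verification at $\epsilon=0$ pins down the smallness threshold. Theorem~\ref{th:det}~$(iv)$ then yields $|\Fix(\pi)|<\infty$ and $\Prb\bigl(\lim_n X(n)\in\Fix(\pi)\bigr)=1$.

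Second, I would classify the fixed points by the support pair $S=(S^1,S^2)$, using $u=|S^1\cap S^2|$, $u^i=|S^i\setminus S^j|$, $s^i=|S^i|$ and $j=3-i$. Four cases arise: $(a)$ $S^1\cap S^2=\varnothing$, in which \eqref{eqn:THE_system} reduces to $\epsilon(x_v^i-x_w^i)=0$ on each $S^i$, and the unique solution $x_v^i=1/s^i$ is a point of $\tilde{\eK}$; $(b)$ $u,u^1,u^2>0$, in which the system collapses (by the vertex-permutation symmetry preserving $U$, $U^1$ and $U^2$) to the $2\times 2$ Cramer system whose solution is \eqref{eqn:deltaEbardelta}, giving a point of $\tilde{\eK}_1^c$ since $\delta^i=O(\epsilon)>0$ and $\bar\delta^i\to 1/u^i$ as $\epsilon\to 0$; $(c)$ $S^i\subsetneq S^j$, in which a direct computation shows the putative solution has a strictly negative coordinate for small $\epsilon$, so $\D(S)=\varnothing$; and $(d)$ $S^1=S^2=:S$, in which adding and subtracting the two defining equations and using $\epsilon\neq 1$ forces $x_v^1=x_v^2=1/|S|$ on $S$.

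Third, I would discard the fixed points of type $(d)$. If $|S|<\kappa$, any $k\notin S$ satisfies $p_k^i=0$ and a short calculation gives $\lim_{x\to p}\pi_k^i(x)/x_k^i=\bigl(\eta/(\eta-(1+\epsilon)/|S|)\bigr)^\alpha>1$, where the denominator is positive thanks to \eqref{eqn:eta}; Theorem~\ref{th:no-toface} then excludes convergence. The case $|S|=\kappa$ places $p$ in $\iD$ and is the main technical obstacle. Here I would exploit the invariance of $p$ under all vertex permutations and under swapping the two walks to diagonalise $D\pi(p)$ in the block form
\begin{equation*}
  D\pi(p)
  =
  \begin{pmatrix}
    (1-\epsilon b)M & -bM \\
    -bM & (1-\epsilon b)M
  \end{pmatrix},
  \qquad
  b=\frac{\alpha}{\kappa h},
  \quad
  h=\eta-\frac{1+\epsilon}{\kappa},
\end{equation*}
where $M$ is the orthogonal projection onto the sum-zero subspace of $\R^\kappa$. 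Its spectrum is $\{0,\,1+(1-\epsilon)b,\,1-(1+\epsilon)b\}$; for $\epsilon<1$ the eigenvalue $1+(1-\epsilon)b$ strictly exceeds $1$ and none equals $1$, so Theorem~\ref{th:no-tointerior} applies. Combined with Step~1 and the finiteness of $\tilde{\eK}\cup\tilde{\eK}_1^c$, this yields the claimed almost sure convergence of $X$ to a single point of $\tilde{\eK}\cup\tilde{\eK}_1^c$.
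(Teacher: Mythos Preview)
Your proposal is correct and follows the same overall architecture as the paper: classify $\Fix(\pi)$ face by face via the linear system~\eqref{eqn:THE_system}, deduce finiteness, apply Theorem~\ref{th:det}$(iv)$, and then eliminate the ``equal support'' fixed points using Theorems~\ref{th:no-toface} and~\ref{th:no-tointerior}. Your four cases $(a)$--$(d)$ reproduce exactly the content of the paper's Lemmas~\ref{lem:uniform_epsilon} and~\ref{lem:alleq}, and your boundary calculation $\lim_{x\to p}\pi_k^i(x)/x_k^i=\bigl(\eta/(\eta-(1+\epsilon)/|S|)\bigr)^\alpha$ matches the paper's verbatim.

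Two small points of comparison. First, your Step~1 is phrased a bit loosely: ``direct verification at $\epsilon=0$'' does not pin down the threshold, since for disjoint supports one has $\det R_\rpar(S)=0$ at $\epsilon=0$; what actually establishes $\det R_\rpar(S)\neq 0$ for small $\epsilon>0$ is your explicit solution in Step~2, which shows uniqueness in every case. The paper sidesteps this by arguing directly from finiteness of $\Fix(\pi)$ (Lemma~\ref{lem:alleq}) rather than from the determinant hypothesis of Theorem~\ref{th:det}$(iv)$. Second, for the interior point $p=(1/\kappa,\ldots,1/\kappa)$ the paper does not compute $D\pi(p)$ at $\epsilon>0$; it instead invokes continuity of the spectrum in $\epsilon$ from the $\epsilon=0$ Jacobian displayed in the proof of Theorem~\ref{th:complete_G}. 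Your explicit block form and the resulting eigenvalues $\{0,\,1+(1-\epsilon)b,\,1-(1+\epsilon)b\}$ are correct (one checks $\partial\pi_v^i/\partial x_w^k$ directly at $p$), and this route has the advantage of giving a concrete range $\epsilon\in(0,1)$ for which linear instability holds, rather than an unspecified ``sufficiently small'' coming from a perturbation argument.
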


\begin{corollary}\label{cor:overlap_complete_G_eps_pos} Let $K_\kappa$
be the complete graph with $\kappa\geq 2$ vertices and let $W^1(n)$,
$W^2(n)$ be two random walks on $K_\kappa$, with transition
probabilities defined according to \eqref{eqn:THE_pi}. Suppose
\eqref{eqn:rhoij} and \eqref{eqn:etaiv} hold. If $\epsilon > 0$, then
the following two assertions hold:
\begin{enumerate}[$(i)$., nosep]
\item the limit $\lim_{n\to\infty} \ducal{H}(X(n))$
  exists almost surely,
\item $\Prb\big(\lim_n \ducal{H}(X(n)) < \kappa^3\epsilon^2\big) = 1$ for 
  sufficiently small $\epsilon > 0$.
\end{enumerate}
\end{corollary}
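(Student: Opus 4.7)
The plan is to reduce both claims to direct computations on the almost sure limit provided by Theorem~\ref{th:complete_G_epsilon}, which guarantees that for $\epsilon>0$ sufficiently small the process $X$ converges almost surely to a single random point $p\in\tilde{\eK}\cup\tilde{\eK}_1^c$. For part $(i)$, the key observation is that the overlap $\ducal H$ defined in \eqref{eqn:H} is a polynomial, hence continuous on the compact polytope $\D$; composing with the a.s.\ convergent sequence $X(n)\to p$ yields $\ducal H(X(n))\to \ducal H(p)$ almost surely, so the limit exists.

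For part $(ii)$, the idea is to evaluate $\ducal H(p)$ explicitly on each of the two candidate limit sets. If $p\in\tilde{\eK}$, then $\Supp{p^1}\cap\Supp{p^2}=\varnothing$, hence $p_v^1 p_v^2=0$ for every $v$ and $\ducal H(p)=0<\kappa^3\epsilon^2$. If instead $p\in\tilde{\eK}_1^c$, the explicit coordinates in \eqref{eqn:deltaEbardelta} give $p_v^1 p_v^2 = \delta^1\delta^2$ when $v\in U$ and $p_v^1 p_v^2=0$ otherwise (since on $U^i$ the coordinate $p_v^j$ vanishes for $j\neq i$, and outside $\Supp{p^1}\cup\Supp{p^2}$ both coordinates vanish), so $\ducal H(p) = u\,\delta^1\delta^2$.

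The remaining step is to bound $u\,\delta^1\delta^2$. Using $1\le u^i\le s^i\le \kappa$ and $u\le \kappa$, for $\epsilon\le 1/(\kappa\sqrt{2})$ one checks that $|\epsilon s^j-u^i|\le u^i$ and $|\epsilon^2 s^i s^j - u^i u^j|\ge u^i u^j/2$, which combined with the explicit formula $\delta^i = \epsilon(\epsilon s^j - u^i)/(\epsilon^2 s^i s^j - u^i u^j)$ yields $\delta^i\le 2\epsilon/u^j$ (the signs of numerator and denominator also match and make $\delta^i>0$). Consequently
\[
  \ducal H(p)\;=\;u\,\delta^1\delta^2\;\le\;\frac{4 u\,\epsilon^2}{u^1 u^2}\;\le\;4\kappa\,\epsilon^2\;\le\;\kappa^3\epsilon^2,
\]
where the last inequality uses $\kappa\ge 2$. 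The only subtlety, which I consider the main (mild) obstacle, is that the threshold for $\epsilon$ must be chosen small enough to simultaneously make Theorem~\ref{th:complete_G_epsilon} applicable and validate the elementary bounds on $\delta^i$; since the admissible support sizes $(u,u^1,u^2,s^1,s^2)$ take values in the finite integer set $[1,\kappa]^5$, a single threshold $\epsilon_0>0$ (depending only on $\kappa$) suffices.
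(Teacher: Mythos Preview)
Your proposal is correct and follows essentially the same route as the paper: invoke Theorem~\ref{th:complete_G_epsilon} to get almost sure convergence of $X$ to a point $p\in\tilde{\eK}\cup\tilde{\eK}_1^c$, use continuity of $\ducal H$ for part~$(i)$, and for part~$(ii)$ bound $\ducal H(p)=u\,\delta^1\delta^2$ via elementary estimates on the explicit formula~\eqref{eqn:deltaEbardelta}. The only differences are cosmetic: the paper bounds $\delta^i<\kappa\epsilon$ and then $\ducal H(p)<u(\kappa\epsilon)^2\le\kappa^3\epsilon^2$, whereas you obtain the sharper $\delta^i\le 2\epsilon/u^j$ and hence $\ducal H(p)\le 4\kappa\epsilon^2$; note that your first inequality is in fact strict (since $\epsilon s^j>0$), which is needed to match the strict inequality in the statement.
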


\begin{remark}\label{general}
As a generalisation to Theorem~\ref{th:complete_G_epsilon}, one could
consider $m\geq 2$ interacting walks on $K_\kappa$. For sufficiently
small but still positive $\epsilon$, a simple computation shows that
$\det R_\rpar(S) \neq 0$ for any $S \in \V$. As a consequence, from
Theorem~\ref{th:det}.$(iv)$ it follows that the vertex occupation
process $X$ converges almost surely to a single point in
$\Fix(\pi)$. A description of the possible limits for $X$ and $\ducal
H(X(n))$ are interesting problems but we do not pursue this here any
further.
\end{remark}

\subsection{Star graphs}
Throughout, let $G$ be a star graph with $m$ edges, i.e. a complete
bipartite graph $K_{1,m}$. Let
\begin{equation}\label{eqn:stars_subgraphs}
\G = \big\{G^i = (V^i, E^i);\ i \in [m]\big\}, \ \text{ where }\ 
V^i = \{1, i+1\},\  E^i = \{\langle 1, i+1 \rangle\}
\end{equation}
be the set of the complete subgraphs of $G$, where the central vertex
of the star is labelled as 1.  Observe that $\G$ satisfying
\eqref{eqn:stars_subgraphs} is completely parameterised by $m$.

Suppose that \eqref{eqn:rhoij} and \eqref{eqn:etaiv} hold. When
$\epsilon = 0$, a single walk occupies the central vertex at a
proportion $a \in [0,1]$; as $\epsilon$ increases, this behaviour
gives way to a regime in which $1 \leq k \leq m$ walks share the
central vertex. A precise description of the limiting vertex
occupations by each random walk is given by the following theorem.

\begin{theorem}[Star graphs]\label{th:star_G}
Let $G$ be the star graph with $m$ subgraphs defined by
\eqref{eqn:stars_subgraphs}.  Let $X$ be the vertex occupation process
in~\eqref{eqn:X}. Suppose that \eqref{eqn:rhoij} and \eqref{eqn:etaiv}
hold.

If $\epsilon = 0$, then $X$ converges almost surely to
\[
  \eS_1
  =
  \Big\{
  p \in \D \ \Big|\ p_1^i \in [0, 1]\ \text{ for some } i \in [m]\
  \text{ and }\ p_1^j = 0\ \text{ for }\ j \neq i
  \Big\},
\]
that is, $\Prb\big(\limset \subset \eS_1 \big) = 1$.

If $\epsilon \in \big(0, \frac12\big)$, then $X$ converges almost
surely to one of the points of the set $\eS_2$ defined as
\[
 \eS_2
  =
  \Big\{p \in \D\ \Big|\ 
  p_1^i = \frac{\epsilon}{|K| + 2\epsilon-1}\ \text{ for } i
  \in  K,\  \text{ and } \ p_1^j=0\  \text{ for } j \in
  [m]{\setminus}K
  \ \text{ and } \varnothing \neq K \subsetneq [m]
  \Big\}.
\]

If $\epsilon \in (\frac12, 1)$, then $X$ converges almost surely 
towards the point
\[
  \Big(
  \frac{\epsilon}{m + 2\epsilon-1}, 1- \frac{\epsilon}{m +
    2\epsilon-1}, \ldots, \frac{\epsilon}{m + 2\epsilon-1},
  1- \frac{\epsilon}{m + 2\epsilon-1}  
  \Big).
\]
\end{theorem}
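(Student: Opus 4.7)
\emph{Proof plan.} The plan is to classify $\Fix(\pi)$ explicitly by solving \eqref{eqn:THE_system}, then to invoke Theorem \ref{th:det} or Corollary \ref{ComulationConvergEx} to establish convergence of $X$ to $\Fix(\pi)$, and finally to rule out the unwanted fixed points via Theorems \ref{th:no-toface} and \ref{th:no-tointerior}. A direct analysis of the balance condition at each walk $i$ with $\Supp{p^i} = \{1, i+1\}$ gives $(1-2\epsilon)\,p_1^i = s - \epsilon$ where $s = \sum_{j \in [m]} p_1^j$, so all such $p_1^i$ coincide. Writing $K = \{i : p_1^i > 0\}$ and $A = \{i : \Supp{p^i} = \{1\}\}$, I would check that $A \neq \emptyset$ and $K \setminus A \neq \emptyset$ simultaneously is impossible for $\epsilon \in [0,1)$, which yields two families of fixed points: (F1) $A = \emptyset$ with $p_1^i = \epsilon/(|K| + 2\epsilon - 1)$ on $K$ (allowing any $K \subseteq [m]$, including $K = \emptyset$); and (F2) $A \neq \emptyset$ with $p_1^i = 1$ on $A$ and $p_{j+1}^j = 1$ on $[m] \setminus A$. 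In the degenerate case $\epsilon = 0$ with $|K| = 1$ the balance equation becomes vacuous and (F1) produces a one-parameter continuum.

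Convergence of $X$ to $\Fix(\pi)$ is obtained next. For $\epsilon = 0$, since $\rpar_v^{ii} = 0$ and the $\eta$'s coincide, the hypotheses of Corollary \ref{ComulationConvergEx} hold, and for every $S \in \Supp{\Fix(\pi)}$ with $J(S) \neq \emptyset$ exactly one walk has support $\{1, i+1\}$, so $|J(S)| = 1$ and condition $(ii)$ applies, yielding $\Prb(\limset \subset \Fix(\pi)) = 1$. For $\epsilon \in (0,1) \setminus \{1/2\}$ the matrix of \eqref{eqn:THE_system} restricted to the variables $y^i = x_1^i$ of (F1) reduces to $(2\epsilon - 1)I + \mathbf{1}\mathbf{1}^\top$, with spectrum $\{2\epsilon - 1,\, 2\epsilon - 1 + |K|\}$ excluding $0$; the matrices arising from (F2) are trivially nonsingular. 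Hence Theorem \ref{th:det}$(iv)$ applies and $X$ converges almost surely to a single point of $\Fix(\pi)$. The unwanted fixed points are then eliminated via Theorem \ref{th:no-toface}: at any (F2) point with $i \in A$ one has $p_{i+1}^i = 0$ and $\lim_{x \to p} \pi_{i+1}^i(x)/x_{i+1}^i = (\eta/(\eta - \epsilon - (|A| - 1)))^\alpha$ which exceeds $1$ whenever $\epsilon + |A| > 1$; at the all-leaves point of (F1) ($K = \emptyset$) with $\epsilon > 0$ the ratio equals $(\eta/(\eta - \epsilon))^\alpha > 1$; and for $\epsilon \in (1/2, 1)$ together with any $K \subsetneq [m]$ nonempty, choosing $i \notin K$ gives $\lim_{x \to p} \pi_1^i(x)/x_1^i = ((\eta - |K|q)/(\eta - \epsilon))^\alpha > 1$, where $q = \epsilon/(|K| + 2\epsilon - 1)$, since $|K|q < \epsilon$ is equivalent to $\epsilon > 1/2$.

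The main obstacle is to rule out convergence to the interior symmetric fixed point $p$ of (F1) with $K = [m]$ in the regime $\epsilon \in (0, 1/2)$, which calls for a spectral analysis of $D\pi(p)$. Working in the coordinates $y^i = x_1^i$ on the tangent space to $\D$ at $p$ and using $H_1^i(p) = H_{i+1}^i(p) = (\eta^*)^\alpha$ with $\eta^* = \eta - (\epsilon + m - 1)p > 0$, a direct differentiation yields
\[
  \frac{\partial \pi_1^i}{\partial y^j}\bigg|_p = (1 - 2\epsilon\gamma)\,\delta_{ij} - \gamma(1 - \delta_{ij}), \qquad \gamma = \frac{\alpha\,p(1-p)}{\eta^*},
\]
so the tangent-space Jacobian equals $(1 + \gamma(1 - 2\epsilon))\,I - \gamma\,\mathbf{1}\mathbf{1}^\top$. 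Its eigenvalues are $1 + \gamma(1 - 2\epsilon)$ with multiplicity $m - 1$ and $1 - \gamma(m + 2\epsilon - 1)$ with multiplicity $1$; for $\epsilon \in (0, 1/2)$ and $m \geq 2$ the first strictly exceeds $1$ and no eigenvalue has real part equal to $1$, so Theorem \ref{th:no-tointerior} gives $\Prb(\lim_n X(n) = p) = 0$. Combining this with the previous non-convergence facts, the almost sure convergence to $\Fix(\pi)$, and (in the case $\epsilon = 0$) the connectedness of $\limset$ from Theorem \ref{th:det}$(iii)$, which confines $\limset$ to the single connected component $\eS_1$ rather than any isolated (F2) point with $|A| \geq 2$, leaves exactly $\eS_1$, $\eS_2$, and the unique symmetric point of the theorem as the surviving limits in the three regimes.
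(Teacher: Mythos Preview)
Your proof is correct and follows essentially the same strategy as the paper: classify $\Fix(\pi)$ via \eqref{eqn:THE_system}, establish $\limset \subset \Fix(\pi)$ using Corollary~\ref{ComulationConvergEx} (for $\epsilon=0$) or Theorem~\ref{th:det}$(iv)$ (for $\epsilon\in(0,1)\setminus\{\tfrac12\}$), and then eliminate the unwanted fixed points via Theorems~\ref{th:no-toface} and~\ref{th:no-tointerior}. Your spectral computation in the tangent coordinates $y^i=x_1^i$, yielding the eigenvalues $1+\gamma(1-2\epsilon)$ and $1-\gamma(m+2\epsilon-1)$, is a more transparent version of the paper's stated eigenvalue formula for the interior symmetric point.
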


Theorem~\ref{th:star_G} admits the following corollary, which asserts
that, for sufficiently small $\epsilon$, the occupation measure of the 
central  vertex can be made arbitrarily small in the long run. 

\begin{corollary}\label{cor:star_overlap} Under the conditions of
  Theorem~\ref{th:star_G}, if $\epsilon \in \Big(0, 
\frac{2}{m(m-1)}\Big) \mathbin{\big\backslash} \big\{\frac12\big\}$
then
\[
  \Prb\Big(\lim_{n\to\infty}\ducal{H}(X(n)) \leq \epsilon\Big) 
  =  1.
\]
In addition, when $\epsilon = 0$, then $\lim_n \ducal{H}(X(n)) =
0$ holds almost surely.
\end{corollary}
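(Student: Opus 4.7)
The plan is to invoke Theorem~\ref{th:star_G} directly and to exploit the structural fact that on the star graph the central vertex is the only one shared by any two sub-graphs. Because $V^i\cap V^j=\{1\}$ whenever $i\neq j$, the definition of the overlap $\ducal H$ in \eqref{eqn:H} collapses to
\[
 \ducal H(x)=\sum_{i<j} x_1^i\, x_1^j, \qquad x\in\D,
\]
which is a polynomial and hence continuous on the compact set $\D$. Consequently, whenever $X(n)\to p$ almost surely, $\ducal H(X(n))\to \ducal H(p)$ almost surely as well, and the whole corollary reduces to evaluating $\ducal H$ at each of the limit points identified by Theorem~\ref{th:star_G}.

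First I would dispose of $\epsilon=0$. Theorem~\ref{th:star_G} gives $\limset\subset\eS_1$ almost surely, and by definition every point of $\eS_1$ has at most one non-zero coordinate $p_1^i$. Hence $p_1^i p_1^j=0$ for all $i\neq j$, so $\ducal H(p)=0$ and the last assertion of the corollary follows.

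For $\epsilon\in\bigl(0,\tfrac12\bigr)$, Theorem~\ref{th:star_G} shows that $X$ converges almost surely to some $p\in\eS_2$, parameterised by a subset $\varnothing\neq K\subsetneq[m]$ of size $k\in\{1,\dots,m-1\}$, with $p_1^i=\epsilon/(k+2\epsilon-1)$ for $i\in K$ and $p_1^j=0$ otherwise. A direct count of pairs inside $K$ yields
\[
 \ducal H(p)=\binom{k}{2}\left(\frac{\epsilon}{k+2\epsilon-1}\right)^{\!2},
\]
so the bound $\ducal H(p)\leq\epsilon$ is equivalent to the numerical inequality $\binom{k}{2}\epsilon\leq(k+2\epsilon-1)^{2}$. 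The case $k=1$ is trivial; for $k\geq 2$ the hypothesis $\epsilon<2/(m(m-1))$ gives $\binom{k}{2}\epsilon\leq\binom{m}{2}\cdot 2/(m(m-1))=1$, while $k+2\epsilon-1\geq k-1\geq 1$ yields $(k+2\epsilon-1)^{2}\geq 1$, as required.

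Finally, the set $\bigl(\tfrac12,1\bigr)\cap\bigl(0,2/(m(m-1))\bigr)$ is non-empty only for $m=2$, in which case the third clause of Theorem~\ref{th:star_G} pins the almost sure limit of $X$ to the single point with $p_1^1=p_1^2=\epsilon/(1+2\epsilon)$, and $\ducal H(p)=(\epsilon/(1+2\epsilon))^{2}\leq\epsilon$ follows from $(1+2\epsilon)^{2}\geq 1$. The argument is essentially a direct corollary of Theorem~\ref{th:star_G} together with continuity of $\ducal H$; the only genuine step is the uniform-in-$k$ estimate, and the threshold $2/(m(m-1))$ is calibrated precisely to absorb the worst value $k=m-1$.
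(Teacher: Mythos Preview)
Your proposal is correct and follows essentially the same approach as the paper: invoke Theorem~\ref{th:star_G}, use that the only shared vertex is the centre so $\ducal H(p)=\sum_{i<j}p_1^ip_1^j$, compute $\ducal H(p)=\binom{k}{2}\bigl(\epsilon/(k+2\epsilon-1)\bigr)^2$ at each possible limit, and bound it by $\epsilon$ via $(k+2\epsilon-1)^2\geq 1$ together with $\binom{k}{2}\epsilon\leq\binom{m}{2}\epsilon\leq 1$. The paper merges the regimes $\epsilon\in(0,\tfrac12)$ and $\epsilon\in(\tfrac12,1)$ by allowing $K=[m]$ in the common formula, whereas you treat $\epsilon\in(\tfrac12,1)$ separately (observing it forces $m=2$); this is cosmetic, and both arguments rest on the identical chain of inequalities.
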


The next result provides an example of two random walks on the three
vertex star graph in the competitive case but with distinct vertex
importance parameters $\eta_v^i$. Transitions to the common central
vertex are favoured over those to the other vertices at the extremes
of the graph, by an importance weight of $\tilde\eta+\eta$ against
$\tilde\eta$.  The limit behaviour of $X$ has in this case a phase
transition determined by $\eta$, the additional importance of the
central vertex. 

\begin{lemma}[Star with preferences]\label{lem:star_with_preferences}
Let $G$ be the star graph with $m=2$ and subgraphs defined by
\eqref{eqn:stars_subgraphs}.  Let $X$ be defined by
\eqref{eqn:X}. Suppose that \eqref{eqn:rhoij} holds with $\epsilon =
0$. For $\tilde\eta > 0$, set $\eta_v^i = \tilde\eta + \eta$ if
$v=1$ and $\eta_v^i = \tilde\eta$ otherwise, where $\tilde \eta$ is
sufficiently large such that \eqref{eqn:eta} is satisfied for all
$\eta > 0$. If $0 < \eta < 1$, $X$ converges almost surely to one of
the points $(1, 0, 0, 1)$ or $(0, 1, 1, 0)$. If $\eta > 1$, $X$
converges almost surely to $(1, 0, 1, 0)$.
\end{lemma}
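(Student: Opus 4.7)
The plan is first to enumerate $\Fix(\pi)$ explicitly via Theorem~\ref{th:det}.$(i)$ and then to discard every fixed point other than the two claimed limits by invoking Theorems~\ref{th:no-toface} and~\ref{th:no-tointerior}. With $V^1 = \{1,2\}$, $V^2 = \{1,3\}$ and the parameters stipulated in the statement, each equality equation of system~\eqref{eqn:THE_system} for walk $i$ at two supported vertices reduces to a linear condition of the form $x^j_1 = \eta$ for $j \neq i$. Running through the nine possible supports $S = (S^1, S^2) \in \V$, the four singleton-support configurations $(1,0,1,0)$, $(1,0,0,1)$, $(0,1,1,0)$, $(0,1,0,1)$ appear as fixed points for every $\eta > 0$; the mixed-support $S = (\{1,2\},\{1,3\})$ contributes the unique interior fixed point $q = (\eta, 1-\eta, \eta, 1-\eta)$ when $0 < \eta < 1$; and the four remaining supports produce inconsistent systems whenever $\eta \notin \{0, 1\}$. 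Hence $\Fix(\pi)$ is finite in both regimes of the lemma. The determinants $\det R_\rho(S)$ are non-zero precisely for the supports that contribute a non-empty $\D(S)$, and the degenerate supports yield $\D(S) = \emptyset$ and therefore do not affect $\Fix(\pi)$; combining this with Theorem~\ref{th:det}.$(iii)$--$(iv)$, $X$ converges almost surely to a single element of $\Fix(\pi)$.

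Next I use Theorem~\ref{th:no-toface} to discard every boundary fixed point except those claimed as limits. At any boundary fixed point $p$ with $p^i_k = 0$, the one-sided limit $\lim_{x \to p}\pi^i_k(x)/x^i_k$ equals $H^i_k(p)/\sum_{w \in \Supp{p^i}} p^i_w H^i_w(p)$. Direct substitution produces four simple ratios: at $(0,1,0,1)$ the test for walk $1$ at vertex $1$ gives $((\tilde\eta+\eta)/\tilde\eta)^\alpha > 1$ for every $\eta > 0$; at $(1,0,1,0)$ the test for walk $1$ at vertex $2$ gives $(\tilde\eta/(\tilde\eta+\eta-1))^\alpha$, strictly greater than one iff $\eta < 1$; at $(1,0,0,1)$ the test for walk $2$ at vertex $1$ gives $((\tilde\eta+\eta-1)/\tilde\eta)^\alpha$, strictly greater than one iff $\eta > 1$; and at $(0,1,1,0)$ the symmetric computation for walk $1$ at vertex $1$ yields the same threshold $\eta > 1$. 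Together these exclusions eliminate $(0,1,0,1)$ in both regimes, $(1,0,1,0)$ when $\eta \in (0,1)$, and both $(1,0,0,1)$ and $(0,1,1,0)$ when $\eta > 1$.

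The only remaining candidate in the regime $0 < \eta < 1$ is the interior point $q$, for which I invoke Theorem~\ref{th:no-tointerior}. A direct computation shows $H^1_1(q) = H^1_2(q) = H^2_1(q) = H^2_3(q) = \tilde\eta^\alpha$, so the denominators of \eqref{eqn:pi} at $q$ all equal $\tilde\eta^\alpha$. Partial differentiation of $\pi^1_1$ and $\pi^2_1$ followed by restriction to the tangent space of $\D$, parametrised by the independent coordinates $(v^1_1, v^2_1)$ with $v^1_2 = -v^1_1$ and $v^2_3 = -v^2_1$, yields the symmetric $2\times 2$ matrix
\[
  \begin{pmatrix} 1 & -\eta(1-\eta)\alpha/\tilde\eta \\ -\eta(1-\eta)\alpha/\tilde\eta & 1 \end{pmatrix}
\]
with eigenvalues $1 \pm \eta(1-\eta)\alpha/\tilde\eta$. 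For $\eta \in (0,1)$ these are real, both distinct from one, and the one carrying the plus sign is strictly greater than one; the spectral hypotheses of Theorem~\ref{th:no-tointerior} are therefore met and $q$ is discarded. Combined with the preceding exclusions, this completes the proof: for $\eta \in (0,1)$ only $(1,0,0,1)$ and $(0,1,1,0)$ can be limits of $X$, while for $\eta > 1$ only $(1,0,1,0)$ survives.

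The main technical obstacle is the Jacobian analysis at $q$, specifically the restriction of $D\pi(q)$ to the tangent space of $\D$ and the verification that its spectrum satisfies the hyperbolicity hypothesis of Theorem~\ref{th:no-tointerior}; the remainder is a careful but routine enumeration of supports together with elementary substitutions into the criterion of Theorem~\ref{th:no-toface}.
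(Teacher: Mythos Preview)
Your proof is correct and follows the same route as the paper: enumerate $\Fix(\pi)$ explicitly, discard boundary fixed points via Theorem~\ref{th:no-toface}, discard the interior point via Theorem~\ref{th:no-tointerior}, and conclude with Theorem~\ref{th:det}. Two minor remarks: the literal hypothesis of Theorem~\ref{th:det}.$(iv)$ (that $\det R_\rho(S)\neq 0$ for \emph{every} $S$) actually fails for the four inconsistent supports, so what both you and the paper are really using is the finiteness of $\Fix(\pi)$, which you established by enumeration and which is all the proof of $(iv)$ actually needs; and Theorem~\ref{th:no-tointerior} is stated for the full $4\times4$ Jacobian $D\pi(q)$, whose spectrum contains, in addition to your two tangential eigenvalues $1\pm\alpha\eta(1-\eta)/\tilde\eta$, two eigenvalues equal to $0$ coming from the identities $\sum_v\pi^i_v\equiv1$, so the hyperbolicity check is complete once those are noted.
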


\subsection{Cyclic graphs}
This section describes the asymptotic properties of $X$ when $G$
corresponds to a cyclic graph with $m \geq 3$ edges, $C_m$.
The set of complete subgraphs is in this case given by
\begin{equation}\label{eqn:cicle_subgraphs}
  \G = \big\{G^i = (V^i, E^i);\ i \in [m]\big\},
  \quad V^i = \{i, i+1\},
  \quad E^i = \{\langle i, i+1 \rangle\},
\end{equation}
for $i=1, \ldots, m-1$, and 
$V^m =\{m,1\}$, $E^m = \{\langle m, 1\rangle\}$.

Again, we
focus on the competitive version of our model by taking $\rpar^{ii}_v
= 0$ and $\rpar_v^{ij} = -1$.  As one would expect, $X$ can converge
to a point $p \in \D$ with `total segregation', that is, a point $p$
such that $\ducal{H}(p)=0$. Yet, the competitive model just described
is relatively rich and presents quite surprisingly several limiting
behaviours.

We will make use of the following notation in order to state the main
results of this section. Let $v \in V$, and then for $\langle v,
v+1\rangle \in E$ and $a \in [0,1]$ denote by $\dbr{a, 1-a}$ the
successive values of the coordinates $p_v^v$ and $p_{v+1}^v$ of a
point $p \in \D$. By a minor abuse of language, we refer hereafter to
$\dbr{a, 1-a}$ as an edge.  An edge is said to be unmixed if $a = 0$
or $a=1$, and mixed otherwise. An unmixed edge is oriented in the
sense that it can be either of the form $\dbr{1, 0}$ or $\dbr{0,
1}$. We eventually may write $\triangle^m$ instead of $\D$.  Finally,
let $\eC_i$, $1 \leq i \leq 4$, be subsets of $\triangle^m$ defined as
follows. Let
\[
  \eC_1 = \Big\{p \in \triangle^m\ \Big|\ p =
  \big(\!\dbr{\textstyle\frac12, \frac12}, \ldots,
  \dbr{\textstyle\frac12,\frac12}\!\big) 
  \Big\},
\]
If $m$  is not a multiple of four, then $\eC_2 = \varnothing$,
else 
\[
  \eC_2 = \bigg\{ p \in \triangle^m\ \Big|\ p =
  \big(\underbrace{p_{ab}, \ldots, p_{ab}}_k\big), \  a, b  \in
  (0,1)\bigg\}
\]
where $p_{ab} = \big(\!\!\dbr{a, 1-a}$, $\dbr{b, 1-b}$, $\dbr{1-a,
a}$, $\dbr{1-b, b}\!\!\big)$ and $k\in \N$ is such that $k =
m/4$. Finally,  let
 \[
  \eC_3 = \Big\{p\in \triangle^m \mid p^i_v \in
  \{0,1\}, i \in [m], v = i\Big\},
\]
and  
\[
  \eC_4 = \Big\{
  p\in \triangle^m\ \Big|\
  \text{mixed edges are flanked by unmixed edges of opposite
    direction} 
  \Big\}.
\]

\begin{theorem}[Cyclic graphs, $\epsilon =0$]\label{th:cyclic_G}
Let $G$ be the cyclic graph with $m$ subgraphs defined by
\eqref{eqn:cicle_subgraphs}.  Let $X$ be the vertex occupation process
defined by \eqref{eqn:X}. Suppose that \eqref{eqn:rhoij} and
\eqref{eqn:etaiv} hold. If $\epsilon = 0$, then 
\begin{enumerate}[($i$),nosep]
\item $\Prb\Big(\limset \subset \textstyle\bigcup_{r=1}^4 \eC_r =
  \Fix(\pi)\Big) = 1$, and
\item If $m$ is not a multiple of four, then $\Prb\Big(\limset \subset
  \eC_3 \cup \eC_4 \subset \Fix(\pi)\Big) = 1$,
\end{enumerate}
where $\pi$ is defined by \eqref{eqn:THE_pi}.
\end{theorem}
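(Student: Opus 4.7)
The plan is to tackle~(i) and~(ii) in turn, first identifying the structure of $\Fix(\pi)$ from the linear system~\eqref{eqn:THE_system} and then invoking the generic-convergence or non-convergence theorems of Section~\ref{sec:main_results}.

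For the characterisation of $\Fix(\pi)$, I would apply Theorem~\ref{th:det}$(i)$ and parameterise each walk by the single scalar $a_i = x^i_i$, so that edge $i$ is mixed iff $a_i \in (0,1)$ and unmixed iff $a_i \in \{0,1\}$. With $\rpar^{ii}_v = 0$, $\rpar^{ij}_v = -1$ for $j\neq i$, and the $\eta$'s all equal, the first block of~\eqref{eqn:THE_system} reduces, for every mixed walk $i$, to $a_{i-1} + a_{i+1} = 1$. From this I would note that if walks $i$ and $i+1$ are both mixed then the constraints propagate around the cycle, forcing every walk to be mixed; otherwise the mixed walks are isolated and their two neighbours are unmixed with opposite values $0$ and $1$. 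Such isolated-mixed configurations give exactly $\eC_3$ (no mixed walks) and $\eC_4$ (at least one). In the all-mixed regime, iterating $a_{i+2} = 1 - a_i$ around the cycle yields $a_i \equiv 1/2$, hence $\eC_1$, unless $m/2$ is even, i.e.\ $4 \mid m$, in which case the odd- and even-indexed subsequences are independent and of the form $(a,1-a,a,1-a,\ldots)$ and $(b,1-b,b,1-b,\ldots)$, giving $\eC_2$. Therefore $\Fix(\pi) = \eC_1 \cup \eC_2 \cup \eC_3 \cup \eC_4$.

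To establish $\Prb\bigl(\limset \subset \Fix(\pi)\bigr) = 1$, I would apply Theorem~\ref{th:ComulationConverg}, whose hypotheses ($\rpar^{ii}_v = 0$ and all $\eta$'s equal) are in force. For each $S \in \Supp{\Fix(\pi)}$ with $J(S)\neq\varnothing$, in the all-mixed case with $4 \mid m$ I take $J_a(S) = \{1, 3, \ldots, m-1\}$ and $J_b(S) = \{2, 4, \ldots, m\}$; because $m/2$ is even, $\{S^1, S^3, \ldots, S^{m-1}\} = \{\{1,2\},\{3,4\},\ldots,\{m-1,m\}\}$ partitions $V$, and $\{S^2, S^4, \ldots, S^m\} = \{\{2,3\},\{4,5\},\ldots,\{m,1\}\}$ is its shift. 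For $S \in \Supp{\eC_4}$ outside the all-mixed case, the mixed walks are non-adjacent on the cycle, so their $S^i$'s are pairwise disjoint and one takes $J_a(S) = J_b(S) = J(S)$. The remaining fixed point (the uniform $1/2$ point when $4 \nmid m$) has $\D(S)$ a singleton, so $J(S) = \varnothing$ and nothing needs checking. This yields~(i).

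For~(ii), assume $4 \nmid m$, so that $\eC_2 = \varnothing$ and $\eC_1 = \{p^*\}$ with $p^*$ the uniform $1/2$ point. I would rule out $p^*$ from $\limset$ via Theorem~\ref{th:no-tointerior}. A direct quotient-rule computation in the coordinates $\epsilon_i = a_i - 1/2$ gives
\[
  \frac{\partial \pi^i_i}{\partial \epsilon_i}\bigg|_{p^*} = 1,
  \qquad
  \frac{\partial \pi^i_i}{\partial \epsilon_{i\pm 1}}\bigg|_{p^*} = c,
  \qquad
  c = \frac{\alpha}{4(\eta - \tfrac12)} > 0,
\]
where $c>0$ uses $\eta>1$ (from~\eqref{eqn:eta}). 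Hence $D\pi(p^*)$ reduces, in these coordinates, to $I + c\,A_{C_m}$ with $A_{C_m}$ the cycle adjacency matrix, whose eigenvalues are $1 + 2c\cos(2\pi k/m)$, $k = 0, \ldots, m-1$. The $k=0$ eigenvalue equals $1+2c > 1$, and since $4 \nmid m$ one has $\cos(2\pi k/m) \neq 0$ for every $k$, so no eigenvalue has real part equal to $1$. Theorem~\ref{th:no-tointerior} then gives $\Prb\bigl(\lim_n X(n) = p^*\bigr) = 0$. Any point of $\eC_3 \cup \eC_4$ has some $a_i \in \{0,1\}$ and thus lies at positive distance from $p^*$, so $\{p^*\}$ is an isolated connected component of $\Fix(\pi)$. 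Combining this with the almost-sure connectedness of $\limset$ (Theorem~\ref{th:det}$(iii)$) and~(i), $\limset$ is almost surely contained either in $\{p^*\}$ or entirely in $\eC_3 \cup \eC_4$; the former alternative would force $\lim_n X(n) = p^*$, of probability zero. The main delicate step I anticipate is precisely this spectral computation at $p^*$: the answer is transparent from the circulant symmetry but rests on a somewhat laborious quotient-rule expansion; by contrast, the partition check for Theorem~\ref{th:ComulationConverg} in the all-mixed case becomes clean as soon as one notices that $m/2$ even is exactly the condition $4\mid m$.
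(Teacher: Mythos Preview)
Your proof is correct and follows essentially the same route as the paper: the characterisation $\Fix(\pi)=\bigcup_{r=1}^4\eC_r$ via the reduced constraint $a_{i-1}+a_{i+1}=1$ is Lemma~\ref{lem:Equilibria_Cyclic_G}, the verification of Theorem~\ref{th:ComulationConverg} with the odd/even partition on the all-mixed support and $J_a=J_b=J(S)$ on $\eC_4$-supports is exactly the paper's argument for~(i), and the exclusion of the uniform point via Theorem~\ref{th:no-tointerior} is the paper's argument for~(ii). Your explicit identification of the tangent-space Jacobian as $I+cA_{C_m}$ with eigenvalues $1+2c\cos(2\pi k/m)$ is in fact cleaner than the paper's treatment, which asserts the eigenvalue $\alpha/(2\eta-1)$ for $m>3$ and falls back on Descartes' rule for $m=3$; your circulant form handles all $m$ with $4\nmid m$ uniformly. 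One small point to tighten: Theorem~\ref{th:no-tointerior} is stated for the full $2m\times 2m$ Jacobian $D\pi(p^*)$, not its restriction to $T\D$; the additional $m$ eigenvalues coming from the transverse directions are all $0$ (since $\pi^i_i+\pi^i_{i+1}\equiv 1$), so they have real part $0\neq 1$ and do not interfere, but you should say so explicitly.
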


\begin{corollary}\label{cor:if_X_converges}
If the joint process $X$ converges almost surely to a single point,
then this limit point must be an element of $\tilde\eC_1 \cup
\tilde{\eC}_3 \cup \tilde{\eC}_4$. The sets $\tilde\eC_1$,
$\tilde{\eC}_3$ and $\tilde{\eC}_4$ are defined as
\begin{gather}
  \tilde\eC_1 = 
  \begin{cases}
     \eC_1,& \text{ if $m$ is a multiple of four},\\
     \varnothing,& \text{ else}.
  \end{cases}
  \notag \\
  \label{eqn:C3tilde}
  \tilde{\eC}_3 =
  \bigg\{
   p \in \eC_3\ \bigg|\ 
   \text{%
      \parbox{7cm}{%
      up to cycle orientation, $p$ does not presents the sequence
      $\ldots$, $\dbr{1,0}$, $\dbr{0,1}$, $\dbr{1,0}$, $\ldots$ 
      }
   }
  \bigg\},
\end{gather}
and 
\begin{equation}
 \label{eqn:C4tilde}
  \tilde{\eC}_4 =
  \left\{
    p \in \eC_4\ \left|\
    \text{%
       \parbox{9cm}{%
        up to cycle orientation, $p$ does not presents the sequence
        $\ldots$, $\dbr{1-a, a}$, $\dbr{0,1}$, $\dbr{1,0}$, $\ldots$  
        nor the sequence $\ldots$, $\dbr{1, 0}$, $\dbr{0,1}$,
        $\dbr{b,1-b}$, $\ldots$ for $a \in [0,1)$ and $b \in (0,1]$
      }
    } 
    \right.
  \right\}.
\end{equation}
\end{corollary}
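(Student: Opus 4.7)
My plan is to combine Theorem~\ref{th:cyclic_G} with the two non-convergence results, Theorems~\ref{th:no-toface} and \ref{th:no-tointerior}, in order to eliminate the forbidden limit points family by family. If $X$ converges almost surely to a single point $p$, then $\limset = \{p\}$, and Theorem~\ref{th:cyclic_G}$(i)$ already gives $p \in \Fix(\pi) = \bigcup_{r=1}^{4} \eC_r$. When $m$ is not a multiple of four, Theorem~\ref{th:cyclic_G}$(ii)$ further restricts $p$ to $\eC_3 \cup \eC_4$ and, by definition, $\tilde\eC_1 = \varnothing$; when $m$ is a multiple of four we have $\tilde\eC_1 = \eC_1$ and must additionally rule out $p \in \eC_2 \setminus \eC_1$.

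To discard the candidates in $\eC_3 \setminus \tilde\eC_3$ and $\eC_4 \setminus \tilde\eC_4$, I would apply Theorem~\ref{th:no-toface}. Each such $p$ lies in $\partial\D$ and, by \eqref{eqn:C3tilde} and \eqref{eqn:C4tilde}, contains one of the forbidden three-edge patterns. The task is to exhibit a walk $j$ and a vertex $k \in V^j$ with $p_k^j = 0$ such that the ratio
\[
  \lim_{x \to p} \frac{\pi_k^j(x)}{x_k^j}
  = \frac{H_k^j(p)}{H_{k'}^j(p)},
\]
where $V^j = \{k, k'\}$, is strictly greater than one. The right-hand side is explicit: using $\rpar_v^{jj} = 0$, $\rpar_v^{j\ell} = -1$ for $\ell \neq j$, and the common value $\eta > 1$ forced by \eqref{eqn:eta}, the empty vertex $k$ always carries a strictly larger $H$-factor than the over-occupied vertex $k'$. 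For example, for the $\eC_3$-pattern $\ldots, \dbr{1, 0}, \dbr{0, 1}, \dbr{1, 0}, \ldots$ on edges $i-1, i, i+1$, taking $j = i$ and $k = i$ gives $H_i^i(p) = \eta^\alpha$ and $H_{i+1}^i(p) = (\eta - 1)^\alpha$, yielding ratio $\big(\eta/(\eta - 1)\big)^\alpha > 1$; the patterns $\ldots, \dbr{1 - a, a}, \dbr{0, 1}, \dbr{1, 0}, \ldots$ and $\ldots, \dbr{1, 0}, \dbr{0, 1}, \dbr{b, 1 - b}, \ldots$ excluded from $\tilde\eC_4$ yield ratios $\big((\eta - a)/(\eta - 1)\big)^\alpha$ and $\big(\eta/(\eta - b)\big)^\alpha$ respectively, both strictly larger than one precisely because $a < 1$ and $b > 0$.

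The main obstacle is the exclusion of $p \in \eC_2 \setminus \eC_1$ when $m \equiv 0 \pmod 4$. Such $p$ lies in the interior $\iD$, so the natural tool would be Theorem~\ref{th:no-tointerior}, but since $\eC_2$ is a two-parameter family of fixed points $D\pi(p)$ automatically carries $1$ as an eigenvalue with multiplicity at least two (tangent to $\eC_2$), and the hypothesis $\mathfrak{R}(\lambda) \neq 1$ fails verbatim. To overcome this, I would exploit the four-periodic structure of $p = (p_{ab}, \ldots, p_{ab})$ and diagonalise $D\pi(p)$ by a Fourier decomposition adapted to the cyclic symmetry of the pattern. The goal is to show that, apart from the obligatory pair of unit eigenvalues along the tangent plane of $\eC_2$, the transverse spectrum contains an eigenvalue with real part strictly greater than one whenever $(a, b) \neq (1/2, 1/2)$. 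A manifold-restricted refinement of the stochastic approximation non-convergence arguments in the spirit of \cite{P90} then rules out convergence of $X$ to $p$. The central point $\eC_1$ itself is allowed by construction, so combining these exclusions with the reductions of the first paragraph completes the proof.
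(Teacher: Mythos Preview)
Your treatment of $\eC_1$, $\eC_3$ and $\eC_4$ coincides with the paper's: the same case split on $m \bmod 4$, and the same use of Theorem~\ref{th:no-toface} with the ratios $\big((\eta-a)/(\eta-1)\big)^\alpha$ and $\big(\eta/(\eta-b)\big)^\alpha$ computed at the forbidden three-edge patterns. The paper organises the $\eC_3$ and $\eC_4$ exclusions together by observing that the $\tilde\eC_3$ pattern is the $a=0$, $b=1$ instance of the $\tilde\eC_4$ patterns, but the computations are the ones you wrote.

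The only substantive divergence is your handling of $\eC_2$. The paper does not carry out the Fourier diagonalisation or the ``manifold-restricted refinement'' you propose: it simply asserts that for any $a,b\in(0,1)$ the Jacobian at $p\in\eC_2$ is linearly unstable and invokes Theorem~\ref{th:no-tointerior} directly. Your observation that $D\pi(p)$ necessarily has $1$ as an eigenvalue along the two-dimensional tangent to $\eC_2$, so that the hypothesis $\mathfrak R(\lambda)\neq 1$ for all $\lambda$ in Theorem~\ref{th:no-tointerior} fails verbatim, is correct and is not addressed in the paper's argument. In effect the paper is implicitly relying on the fact that Pemantle's original non-convergence theorem in \cite{P90} only requires the existence of one eigenvalue with strictly positive real part for $DF(p)$, not hyperbolicity; the hyperbolicity clause in Theorem~\ref{th:no-tointerior} is stronger than what is actually needed. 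So your proposed workaround is more careful than the paper's own proof, but also more elaborate than what the paper presents; if you are willing to cite the sharper form of Pemantle's result, the $\eC_2$ case collapses to a single Jacobian check, as in the paper.
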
  

\begin{theorem}[Cyclic graphs, $\epsilon > 0$]\label{th:cyclic_G_e} 
For each $i\in [m]$, let $W^i$ be a random walk defined on the $i$-th
edge of $C_m$ according to the transition probability in
\eqref{eqn:pi}.  Suppose that \eqref{eqn:rhoij} and \eqref{eqn:etaiv}
hold. For sufficiently small $\epsilon>0$, the vertex occupation
process $X$ converges almost surely to a single point of \
$\Fix(\pi)$.
\end{theorem}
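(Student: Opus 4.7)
My plan is to verify the hypothesis of Theorem~\ref{th:det}$(iv)$ for sufficiently small $\epsilon > 0$: I will show that $\det R_\rpar(S) \neq 0$ for every $S \in \V$ with $S^i \neq \varnothing$ for each $i \in [m]$ (those with some $S^i = \varnothing$ yield $\D(S) = \varnothing$ and contribute nothing to $\Fix(\pi)$). Combined with Theorem~\ref{th:det}$(iii)$, this forces $\limset$ to be an almost surely connected subset of the finite set $\Fix(\pi)$, hence a single point, which is exactly the claimed almost sure convergence.

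The crucial observation is that under \eqref{eqn:rhoij} every entry of $R_\rpar(S)$ is an affine function of $\epsilon$, because $\rpar_v^{ij} = -1$ for $i \neq j$ is $\epsilon$-free and $\rpar_v^{ii} = -\epsilon$ appears only in the difference-equation rows of \eqref{eqn:THE_system}. Hence $\det R_\rpar(S)$ is a polynomial in $\epsilon$; once it is shown to not vanish identically for each relevant $S$, the finiteness of $\V$ will supply a threshold $\epsilon_0 > 0$ such that $\det R_\rpar(S) \neq 0$ for all such $S$ and every $\epsilon \in (0, \epsilon_0)$, permitting the application of Theorem~\ref{th:det}$(iv)$.

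The heart of the proof is thus to show $\det R_\rpar(S) \not\equiv 0$. I would set $T(S) = \{i \in [m] : |S^i| = 2\}$ and divide each difference-equation row by $\epsilon$, producing a rescaled matrix $\tilde R_\rpar(S)$ with $\det R_\rpar(S) = \epsilon^{|T(S)|} \det \tilde R_\rpar(S)$. Since $|V^i| = 2$ on the cycle, the difference-equation row for each $i \in T(S)$ involves only the four variables $x^i_i$, $x^i_{i+1}$, $x^{i-1}_i$, $x^{i+1}_{i+1}$, with own-walk coefficients $-1$ and $+1$ after rescaling and neighbour couplings $\pm 1/\epsilon$. All remaining rows --- the normalisation rows for $i \in T(S)$ and the normalisation plus zero-setting rows for $i \notin T(S)$ --- are $\epsilon$-free and involve only variables of a single walk. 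Letting $\epsilon \to \infty$ therefore produces a block-diagonal limit $\tilde R_\infty(S)$ whose diagonal block for $i \in T(S)$ is $\left(\begin{smallmatrix} -1 & 1 \\ 1 & 1 \end{smallmatrix}\right)$, of determinant $-2$, while the block for $i \notin T(S)$ is a $2 \times 2$ permutation matrix (identity when $S^i = \{i\}$, swap when $S^i = \{i+1\}$) of determinant $\pm 1$. Consequently $\det \tilde R_\infty(S) = \pm 2^{|T(S)|} \neq 0$, so $\det R_\rpar(S)$ is a nonzero polynomial of degree exactly $|T(S)|$ in $\epsilon$.

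The step I expect to require the most care is the routine but notationally heavy verification that every off-diagonal coupling of $\tilde R_\rpar(S)$ vanishes as $\epsilon \to \infty$; this uses only that in $C_m$ each walk shares a vertex with exactly two neighbouring walks and that the sole $\epsilon$-bearing terms of \eqref{eqn:THE_system} are those of the form $\rpar_v^{ii} x_v^i$. No further probabilistic input is needed beyond Theorem~\ref{th:det}, and almost sure convergence of $X$ to a single point of $\Fix(\pi)$ then follows immediately from parts $(iii)$ and $(iv)$ of that theorem applied for any $\epsilon \in (0, \epsilon_0)$.
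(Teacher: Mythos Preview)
Your argument is correct and reaches the same conclusion as the paper, but by a genuinely different route. The paper first eliminates half the variables via $x_{i+1}^i = 1 - x_i^i$, reducing \eqref{eqn:THE_system} to an $m\times m$ system in $z_i = x_i^i$. It then splits according to whether all $|S^i|=2$ or not: in the first case the coefficient matrix is the circulant with first row $(2\epsilon,-1,0,\ldots,0,-1)$, with determinant $2^m\prod_{k=0}^{m-1}\big(\epsilon-\cos(2\pi k/m)\big)$; in the second case the system decouples into symmetric tridiagonal T\"oeplitz blocks with band $(-1,2\epsilon,-1)$, whose determinants are $2^{r_j}\prod_{k=1}^{r_j}\big(\epsilon-\cos(\pi k/(r_j+1))\big)$. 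These explicit factorizations pin down exactly the algebraic values of $\epsilon$ at which some $\D(S)$ is a continuum (this is what feeds the Remark after Theorem~\ref{th:cyclic_G_e}).

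Your approach stays with the full $2m\times2m$ matrix $R_\rpar(S)$, exploits that all entries lie in $\{0,\pm1,\pm\epsilon\}$, and kills the problem with a single leading-coefficient computation: after dividing the $|T(S)|$ difference rows by $\epsilon$, the cross-walk couplings $\pm1/\epsilon$ vanish as $\epsilon\to\infty$ and the limit is block-diagonal with $2\times2$ blocks $\left(\begin{smallmatrix}-1&1\\1&1\end{smallmatrix}\right)$ or permutation matrices, giving $\det\tilde R_\infty(S)=\pm2^{|T(S)|}$. Hence $\det R_\rpar(S)$ is a polynomial in $\epsilon$ of exact degree $|T(S)|$, not identically zero, so its positive roots are finite in number and a uniform threshold $\epsilon_0>0$ exists. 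This is slicker and treats all $S$ uniformly, at the cost of the explicit root information the paper obtains. One small remark: Theorem~\ref{th:det}$(iv)$ as stated asks $\det R_\rpar(S)\neq0$ for every $S\in\V$, which fails when some $S^i=\varnothing$ (the normalisation row is then zero); but, as you note, such $S$ contribute $\D(S)=\varnothing$, so $\Fix(\pi)$ is still finite and the proof of $(iv)$ goes through verbatim---the paper's own proof of Theorem~\ref{th:cyclic_G_e} makes the same tacit simplification.
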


\begin{remark}
In the competitive dynamics considered by Theorem~\ref{th:cyclic_G_e},
the set $\Fix(\pi)$ may have continuum components depending on $S$,
$m$, and $\epsilon$. For given $S\in \V$ and $m\geq 3$, the values for
$\epsilon$ where this occurs belong to the set of real algebraic
numbers of $(0, 1]$.  For instance, when $S\in\V$ is such that $S^i =
V^i$ for all $i\in [m]$, there is a continuum of fixed points of $\pi$
when $m = 3$ and $\epsilon=1$. When $m=16$, this occurs if $\epsilon
\in \{0, 1/\sqrt{2}, \cos(\pi/8), 1\}$, and when $m=24$ if
$\epsilon=\big\{0, 1/2, 1/\sqrt{2}, \sqrt{3}/2,
\frac14\big(\pm\sqrt{2}+\sqrt{6}\big), 1\big\}$. These values for
$\epsilon$ and those for any other $S\in \V$ and $m\geq 3$ can be
deduced from the arguments used in the proof of
Theorem~\ref{th:cyclic_G_e}.
\end{remark}

\section{Proofs of Theorem~\ref{th:det} and
Corollaries~\ref{cor:conv_is_generic},
\ref{cor:emptyintesection}}\label{sec:proof_Thdet}

\subsection{Proof of Theorem~\ref{th:det}}

\subsubsection{Proof of assertion $(i)$} 
Suppose first that $x \in \Fix(\pi)$. Let $S \in \V$ and $S^i =
\Supp{x^i}$ be such that $v, w \in S^i$.  We will show that $x \in
\D(S)$. Since $x \in \Fix(\pi)$ it follows that $x_v^i = \pi_v^i(x)$
and $x_w^i = \pi_w^i(x)$. Let $N^i(x)$ be the common normalisation
factor of $\pi_v^i(x)$ and $\pi_w^i(x)$, that is, the denominator of
the fraction in equation (\ref{eqn:pi}).  Using the fact that $x_v^i >
0$ and $x_w^i > 0$ and multiplying the two previous equations by
$N^i(x)/x_v^i$ and $N^i(x)/x_w^i$ respectively gives
\begin{equation}\label{eqn:normalising}
  H_v^i(x) = N^i(x) = H_w^i(x), \quad v, w \in S^i
\end{equation}
where $H_v^i(x)$ is defined in \eqref{eqn:pi}.

The first set of equations in \eqref{eqn:THE_system} follow by raising 
all the expressions of \eqref{eqn:normalising} to the power
$1/\alpha$. The two other sets of equations in \eqref{eqn:THE_system}
follow because by assumption $x = \pi(x) \in \D$, and therefore
$\sum_{\ell \in S^i} x_\ell^i =1$ and $x_u^i = 0$ when $u \in
V^i{\setminus}S^i$.

Assume now that $x \in \D(S)$ for some $S \in \V$. We will show
that $x \in \Fix(\pi)$.  If $x_v^i = 0$, then $\pi_v^i(x) = 0$ by
definition of $\pi$ in (\ref{eqn:pi}). If $x_v^i > 0$, then $v \in
S^i$ and it follows by \eqref{eqn:THE_system} and \eqref{eqn:pi} that
$H_v^i(x)$ is equal to $H_w^i(x)$ for all $w \in S^i$.  As a
consequence, the denominator of the fraction in (\ref{eqn:pi}) reduces
to
\[ 
 N^i(x) = \sum_{w \in V^i} x_w^i H_w^i(x)  = \sum_{w \in S^i} x_w^i
  H_w^i(x)  = \sum_{w \in S^i} x_w^i H_v^i(x)  = H_v^i(x)
\]
Substitution of the denominator $N^i(x)$ in (\ref{eqn:pi}) by
$H_v^i(x)$ now gives
\[
  \pi_v^i(x) = x_v^i\frac{H_v^i(x)}{N^i(x)} =
  x_v^i\frac{H_v^i(x)}{H^i_v(x)} = x_v^i.
\]
This establishes the equality in~\eqref{eqn:Fix}.  The set $\V$ has
cardinality $2^{\sum_{i=1}^m |V^i|}$ and each of the sets $V^i$ is
finite, the union in \eqref{eqn:Fix} is therefore finite. The rest of
the proof consists in showing that $\D(S)$ is connected. To see this,
it is sufficient to show that $\D(S)$ is the intersection of two
convex subsets of $\R^d$ where we recall that $d=\sum_{i\in [m]} d^i$.
The first subset is the set of solutions of $d$ linear equations of
$d$ real variables defined by \eqref{eqn:THE_system} being therefore a
convex subset of $\R^d$.  The second subset is the set $\{x \in \R^d |
x_v^i > 0, v \in S^i, i \in [m]\}$ which is obviously a convex subset
of $\R^d$ as well.

\subsubsection{Proof of assertion $(ii)$ of Theorem~\ref{th:det}}
In view of the first item in the theorem, it is sufficient to show
that there exists one $S \in \V$ such that for any $\eta_v^i$,
$\rpar_v^{ij}$ and $\alpha$, the corresponding set of solutions to
\eqref{eqn:THE_system}, $\D(S)$, is non-empty. To this end let $S$ be
such that $|S^i| = 1$ for all $i \in [m]$ and let $S^i= \{v^i\}$, $v^i
\in V^i$. The system \eqref{eqn:THE_system} admits in this case the
following trivial solution for any $\rpar_v^{ij}$, $\eta_v^i$ and
$\alpha$: $x_{v^i}^i = 1$ and $x_v^i = 0$ for all $v \in V^i$ such
that $v \neq v^i$. To see this, note that the second equation in
\eqref{eqn:THE_system} reduces to $x_\ell^i = 1$, while the first only
applies when $|S^i| \geq 2$. This concludes the proof.

\subsubsection{Proof of assertion $(iii)$ of Theorem~\ref{th:det}}
The proof of item $(iii)$ and a part of the proof of item $(iv)$ from
Theorem~\ref{th:det} is based on the dynamical systems treatment of
stochastic approximations as presented in \cite{B96}. A
first step consists in identifying the joint vertex occupation process
$X$ as a stochastic approximation, see Lemma~\ref{lem:SA}. This allows
to relate the long term behaviour of $X$ to the one of a flow defined
by an underlying vector field, see Lemma~\ref{lem:limitset}. Global
convergence results are then established by means of a Lyapunov
function. The construction of this function is obtained as a
generalisation of the arguments concerning a single self reinforced
random walk in \cite{B15} and can be traced back to \cite{BudhirajaI}.
This is possibly the main contribution of
Section~\ref{sec:proof_Thdet}. This generalisation may also be of
independent interest.

We start by introducing some preliminary definitions and
results. Recall that $d = \sum_{i \in [m]} d^i$, where $d^i = |V^i|$
and $V^i$ is the set of vertices of the subgraph $G^i$ of $G$. Denote
by $\TsD$ the tangent space of $\D$, that is
\[
  \TsD
  =
  \bigg\{\big(x_{\underline{1}}^1, \ldots, x_{{\underline{d}}^1}^1, 
  \ldots, x_{\underline{1}}^m, \ldots, x_{{\underline{d}}^m}^m\big)
  \in \R^d\ \bigg|\ \sum_{v=1}^{d^i} x_{\underline{v}}^i = 0,\ i \in
  [m]\bigg\}.
\]

\begin{lemma}[Stochastic approximation]\label{lem:SA} Let
$\G$ be a set of complete subgraphs of $G$ and $X=\{X(n);
n\geq 0\}$ be the vertex occupation measure process in \eqref{eqn:X}
be defined by the transition probability $\pi$ in \eqref{eqn:pi}. $X$
satisfies the recursion
\begin{equation}
  \label{eqn:SA}
     X(n+1) - X(n) = \big(F(X(n))+U(n)\big) \iG_n,
\end{equation}
where $F:\D \to \TsD$ is the smooth vector field 
\begin{equation}
  \label{eqn:THE_field}
  F(x) = -x  + \pi(x),
\end{equation}
$U(n)$ is the martingale difference $U(n) = \xi(n) - \E[\xi(n)\mid
\F_n]$, with $\xi(n) = (\xi^1(n), \ldots, \xi^m(n))$, $\xi^i(n) =
(\xi^i_{\underline{1}}(n), \ldots, \xi^i_{{\underline{d}^i}}(n))$,
$\xi^i_v(n) = \Ind\big\{W^i(n+1) = v\big\}$, and $\Xi_n$ is a
$d$-by-$d$ diagonal matrix formed by $m$ diagonal $d^i\times d^i$
block matrices $\Xi^i_n$, each with all diagonal entries equal to
$\gamma_n^i = 1/(d^i + n + 1)$, in the order $\Xi^1_n$, $\Xi^2_n$,
$\ldots$, $\Xi^m_n$.
\end{lemma}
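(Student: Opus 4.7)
The plan is to verify the identity \eqref{eqn:SA} coordinate-by-coordinate by manipulating the definition \eqref{eqn:occupation_measure} directly, and then to repackage the resulting scalar identities into the claimed matrix form. First, I would introduce the unnormalised count $N_v^i(n) = 1 + \sum_{k=1}^n \Ind\{W^i(k) = v\}$, so that $X_v^i(n) = N_v^i(n)/(d^i+n)$, and observe that $N_v^i(n+1) = N_v^i(n) + \xi_v^i(n)$ with $\xi_v^i(n) = \Ind\{W^i(n+1) = v\}$. Clearing denominators and simplifying the numerator then collapses the telescoping difference to
\[
X_v^i(n+1) - X_v^i(n) = \frac{1}{d^i + n + 1}\bigl(\xi_v^i(n) - X_v^i(n)\bigr) = \gamma_n^i\bigl(\xi_v^i(n) - X_v^i(n)\bigr).
\]

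Next, I would insert and subtract $\pi_v^i(X(n))$ inside the bracket in order to split it into a drift piece and a noise piece:
\[
\xi_v^i(n) - X_v^i(n) = \bigl(\pi_v^i(X(n)) - X_v^i(n)\bigr) + \bigl(\xi_v^i(n) - \pi_v^i(X(n))\bigr) = F_v^i(X(n)) + U_v^i(n).
\]
By the conditional transition law \eqref{eqn:trans_prob}, $\E[\xi_v^i(n)\mid \F_n] = \Prb(W^i(n+1)=v\mid \F_n) = \pi_v^i(X(n))$, so $U_v^i(n) = \xi_v^i(n) - \E[\xi_v^i(n)\mid \F_n]$ is a genuine $(\F_n)$-martingale difference, exactly as claimed. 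Smoothness of $F = -\mathrm{Id} + \pi$ follows at once from smoothness of $\pi$, and $\sum_{v} F_v^i(x) = -\sum_v x_v^i + \sum_v \pi_v^i(x) = 0$ for every $i$ since both $x$ and $\pi(x)$ lie in $\D$, which confirms $F(\D)\subset \TsD$.

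Finally, I would repackage the componentwise equalities into the matrix form \eqref{eqn:SA}. Because $\gamma_n^i$ depends only on $i$ and not on $v$, right-multiplication by the block-diagonal matrix $\iG_n$ whose $i$-th diagonal block $\Xi_n^i$ is the scalar matrix $\gamma_n^i I_{d^i}$ reproduces precisely the scalar identity obtained above for each coordinate. No serious obstacle is expected: the argument is essentially a bookkeeping of the definition of $X$ combined with the identification $\E[\xi_v^i(n)\mid \F_n] = \pi_v^i(X(n))$. The only point that requires care is keeping track of the fact that distinct walks $i$ evolve with distinct step sizes $\gamma_n^i$, which is precisely the reason that $\iG_n$ is encoded as block-diagonal rather than as a single scalar gain.
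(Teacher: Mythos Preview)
Your proposal is correct and follows essentially the same approach as the paper's own proof: compute the one-step increment of $X_v^i$ directly from \eqref{eqn:occupation_measure} to obtain $X_v^i(n+1)-X_v^i(n)=\gamma_n^i(\xi_v^i(n)-X_v^i(n))$, then split $\xi_v^i(n)$ via $\E[\xi_v^i(n)\mid\F_n]=\pi_v^i(X(n))$ and assemble into the block-diagonal matrix form. Your treatment in fact adds a couple of useful remarks (the verification that $F(\D)\subset\TsD$ and the explicit rationale for the block-diagonal gain) that the paper's proof leaves implicit.
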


The proof Lemma~\ref{lem:SA} is presented in the Appendix.

Observe that for any $\rpar_v^{ij} \in \R$, $\alpha > 0$ and
$\eta_v^i$ satisfying \eqref{eqn:eta}, the vector field $F$ in
\eqref{eqn:THE_field} is Lipschitz continuous as all derivatives
$\partial \pi_v^i(x)/\partial x_w^j$, $i, j \in [m]$, $v \in V^i$, $w
\in V^j$, $x\in\D$ are uniformly bounded. In this case, according to
standard results, there exists a uniquely defined one-parameter family
$\Phi=\{\phi_t : t \in [0, \infty)\}$ of self-maps of $\D$, called the
semi-flow induced by $F$ that satisfies $\phi_0(x_0) = x_0$ and
$\phi_t(x_0) \in \iD$ for all $\quad t \geq 0$, and
\begin{equation}\label{eqn:THE_flow} 
 \frac{d}{dt} \phi_t(x_0) = F(\phi_t(x_0)) \quad\text{for all}\quad t
 \geq 0.
\end{equation}

\begin{definition}\label{def:flow}
A set $\Lambda \subset\D$ is forward invariant by $\Phi$  if $\phi_t(x)
\in \Lambda$ for all $t\geq 0$ and $x \in \Lambda$. The orbit of $x
\in \D$ by $\Phi$ is the set $\{\phi_t(x) : t \geq 0\}$. Let $\delta
>0$, $T>0$. A $(\delta, T)$-pseudo orbit from $x \in \D$ to $y \in \D$
is a finite sequence of partial orbits $\{\phi_t(y_i) : 0 \leq t \leq
t_i\}$; $i=0, \ldots, k-1$ of the semi-flow $\Phi$ such that
\[
  \|y_0 - x\| < \delta, \quad \|\phi_{t_i}(y_i) - y_{i+1}\| <
  \delta \ \text{ for }\ 
  t_i \geq T, \quad i=0, \ldots, k-1, \ \text{ and }\  y_k = y. 
\]
A point $x \in \D$ is \emph{chain-recurrent} if for every
$\delta>0$ and $T>0$ there is a $(\delta, T)$-pseudo orbit from $x$ to
itself. The set of chain-recurrent points of $\Phi$ will be
denoted by $\CR$.
\end{definition}

Lemma~\ref{lem:limitset} bellow makes precise the relation between the
asymptotic behaviour of the process $X$ and the flow defined by the
smooth vector field $F$ in \eqref{eqn:THE_field}.

\begin{lemma}[Limit set lemma]\label{lem:limitset}
Let $X$ be the vertex occupation measure process in \eqref{eqn:X} be
defined by the transition probability $\pi$ in \eqref{eqn:pi}. Let
$\CR$ be the chain recurrent set of the semi-flow induced by the
vector field $F$ in \eqref{eqn:THE_field}. The following two
statements hold almost surely
\begin{enumerate}[$(i)$., nosep]
\item $\limset$ is connected.
\item $\limset \subset \CR$.
\end{enumerate}
\end{lemma}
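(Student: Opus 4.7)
The plan is to interpret $X$ as an asymptotic pseudo-trajectory (APT) of the semi-flow $\Phi$ generated by $F$ in the sense of Beno\"it, and then invoke the standard conclusions of Bena\"im (1996, 1999) that the limit set of any bounded APT is compact, connected, and contained in the chain-recurrent set $\CR$. Both items $(i)$ and $(ii)$ will then fall out simultaneously from the same APT property.

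First, I would gather the ingredients provided by the lemmas stated so far. Lemma~\ref{lem:SA} writes $X(n+1)-X(n) = (F(X(n)) + U(n))\iG_n$, where $U(n)$ is a bounded martingale difference (each coordinate is bounded by $2$ in absolute value, being an indicator minus a conditional probability), $F$ is Lipschitz on the compact set $\D$ (since the denominators in \eqref{eqn:pi} are uniformly bounded away from zero by \eqref{eqn:eta}), and $\iG_n$ is diagonal with blocks of step sizes $\gamma_n^i = 1/(d^i+n+1)$. These step sizes satisfy the Robbins--Monro conditions
\[
  \sum_{n\geq 0}\gamma_n^i = \infty,\qquad \sum_{n\geq 0}(\gamma_n^i)^2 < \infty,
\]
and moreover $\gamma_n^i = 1/n + O(1/n^2)$ uniformly in $i\in[m]$.

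Second, I would verify that $X$, viewed as an affine interpolation between the points $X(n)$ along the cumulative time scale $\tau_n = \sum_{k=1}^n 1/k$, is an asymptotic pseudo-trajectory of $\Phi$. The two estimates that matter are (a) the tail of the noise, which by the $L^2$ martingale convergence theorem and $\sum (\gamma_n^i)^2<\infty$ guarantees that $M_n = \sum_{k=0}^{n-1}\iG_k U(k)$ converges almost surely, so $\sup_{t\in[0,T]}\|\sum_{k:\tau_n\leq \tau_k \leq \tau_n+t}\iG_k U(k)\|\to 0$; and (b) a Gronwall-type argument using the Lipschitz bound on $F$ together with the uniform equivalence $\gamma_n^i \sim 1/n$, yielding
\[
  \sup_{0\leq t\leq T}\big\|X(\tau_n+t)-\phi_t(X(\tau_n))\big\|\longrightarrow 0\quad\text{a.s.}
\]
for every fixed $T>0$. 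The slight anisotropy of $\iG_n$ across walks is absorbed into a $O(1/n)$ error term, since $\gamma_n^i-\gamma_n^j = O(1/n^2)$ and the interpolation time scale can be taken as the average step size without altering the APT property.

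Third, with the APT property in hand, I would invoke Bena\"im (1999, Theorem~5.7 and Proposition~5.3 and 6.3): the limit set of any bounded APT of a semi-flow is internally chain transitive and in particular compact, connected, and contained in the chain-recurrent set of the semi-flow. Since $X$ takes values in the compact set $\D$, both conclusions follow: $\limset$ is almost surely connected, and $\limset \subset \CR$ almost surely.

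The only step that requires care is the APT verification in the presence of distinct per-walk step sizes $\gamma_n^i$. The clean way to handle this is either to change the time parameterization walk-by-walk and then patch, or—more directly—to observe that the difference $\gamma_n^i - \gamma_n^1$ is summable-squared and feeds only an $\ell^1$ perturbation into the noise, so that the standard one-scale APT proof of Bena\"im goes through verbatim. This is the main (mild) technical obstacle; everything else is either bookkeeping or a direct quotation from the established stochastic approximation literature.
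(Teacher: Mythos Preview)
Your proposal is correct and follows essentially the same route as the paper: both verify the standard Bena\"im conditions (boundedness, Robbins--Monro step sizes, martingale control of the noise tail via $L^2$ convergence) and then invoke the chain-recurrence theorem for stochastic approximations from \cite{B96}. The only cosmetic difference is that the paper handles the per-walk step-size anisotropy by working throughout with $\widehat\gamma_n = \max_i \gamma_n^i$ rather than via your $O(1/n^2)$ perturbation argument, but the two devices are equivalent here.
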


The proof of Lemma~\ref{lem:limitset} relies on Lemma~\ref{lem:SA} and
is presented in the Appendix.  In particular, Assertion $(i)$ of
Lemma~\ref{lem:limitset} establishes Assertion $(iii)$ of Theorem
\ref{th:det}.
\subsubsection{Proof of assertion $(iv)$ of Theorem~\ref{th:det}}
A $C^0$ function $L:\D\to\R$ is said to be a strict Lyapunov function
for the forward invariant set $\Lambda \subset \D$ of the semi-flow
$\Phi$ induced by the vetor field $F$, or simply a strict Lyapunov
function for $\Lambda$, if for all $x \in \D$ the map $t \in \R_+
\mapsto L(\phi_t(x))$ is constant for $x \in \Lambda$ and strictly
decreasing for $x \in \D{\setminus}\Lambda$.  Observe that if
$\big\langle \nabla L(x), F(x)\big\rangle < 0$ holds for all $x \in
\D$ such that $F(x) \neq 0$ and $F$ is the vector field in
\eqref{eqn:THE_field}, then $L$ is a strict Lyapunov function for
$\Lambda = \Fix(\pi)$.

\begin{theorem}[Lyapunov function]\label{th:Lyapunov} 
The function $L:\D \to \R$ defined by
\begin{equation}\label{eqn:Lyapunov-pwr-H} 
  L(x)  = - \sum_{i\in [m]} \sum_{v\in V^i}  \eta_v^i  x_v^i - \frac12
  \sum_{v \in V}\sum_{i\in I_v}\sum_{j\in I_v} \rpar_v^{ij}x_v^i
  x_v^j.
\end{equation}
is a strict Lyapunov function for $\Fix(\pi)$.
\end{theorem}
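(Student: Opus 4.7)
The plan is to verify the sufficient condition recalled just before the theorem, namely that $\bigl\langle \nabla L(x), F(x)\bigr\rangle < 0$ for every $x \in \D$ with $F(x) \neq 0$. Since $F \equiv 0$ on $\Fix(\pi)$, that set is automatically forward invariant under the semi-flow defined by \eqref{eqn:THE_flow}, and strict negativity of this inner product outside $\Fix(\pi)$ already makes $L$ a strict Lyapunov function in the sense recalled above.

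The computational heart of the argument is the identity
\[
  \frac{\partial L}{\partial x_w^k}(x) = -\eta_w^k - \sum_{j \in I_w} \rpar_w^{kj}\, x_w^j = -H_w^k(x)^{1/\alpha},
\]
valid for every $k \in [m]$ and $w \in V^k$. It follows by direct differentiation of \eqref{eqn:Lyapunov-pwr-H}: the pairs $(i,j) = (k,j)$ and $(i,j) = (i,k)$ in the bilinear form both contribute, and the symmetry $\rpar_w^{kj} = \rpar_w^{jk}$ from \eqref{eqn:rho}, together with the diagonal term $\rpar_w^{kk} (x_w^k)^2$, absorbs the factor $\tfrac12$. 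Positivity of $H_w^k(x)$ is ensured by \eqref{eqn:eta}. This identity is the motivation for the specific form of $L$: its gradient recovers exactly the quantity raised to the power $\alpha$ inside $\pi_w^k$.

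Substituting this into $\langle \nabla L(x), F(x)\rangle$ with $F = -x + \pi$, replacing $\pi_w^k$ via \eqref{eqn:pi}, and grouping over the positive common denominator $N^k(x) = \sum_{u \in V^k} x_u^k H_u^k(x)$, a direct rearrangement produces
\[
  \bigl\langle \nabla L(x), F(x) \bigr\rangle = -\sum_{k \in [m]} \frac{1}{N^k(x)}\, \mathrm{Cov}_{x^k}\!\bigl(Y^k,\, (Y^k)^{\alpha}\bigr),
\]
where $Y^k(w) = H_w^k(x)^{1/\alpha}$ is regarded as a strictly positive random variable on $V^k$ under the probability measure $x^k = (x_w^k)_{w \in V^k}$. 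Since $\alpha > 0$, both $y \mapsto y$ and $y \mapsto y^\alpha$ are strictly increasing on $\R_+$, so $Y^k$ and $(Y^k)^\alpha$ are comonotone; the pairwise identity $2\,\mathrm{Cov}_{x^k}(Y^k,(Y^k)^\alpha) = \sum_{w,u} x_w^k\, x_u^k\, (Y^k(w)-Y^k(u))\bigl((Y^k(w))^\alpha - (Y^k(u))^\alpha\bigr)$ then makes each covariance nonnegative, with equality if and only if $Y^k$ is $x^k$-a.s.\ constant, equivalently $H_w^k(x)$ is constant over $w \in \Supp{x^k}$.

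To finish, one identifies the equality case with $\Fix(\pi)$: the condition that $H_w^k(x)$ be constant over $w \in \Supp{x^k}$ for every $k \in [m]$, combined with $x \in \D$, is exactly the system \eqref{eqn:THE_system} for $S = \Supp{x}$, and so, by \eqref{eqn:normalising} and the argument already given in the proof of Theorem~\ref{th:det}$(i)$, is equivalent to $x \in \Fix(\pi)$. Hence $\langle \nabla L(x), F(x)\rangle \leq 0$ everywhere on $\D$, with strict inequality whenever $x \notin \Fix(\pi)$, which is the required property. The only genuine obstacle I foresee is the bookkeeping in the gradient computation---correctly merging the $(i,k)$ and $(k,j)$ contributions via \eqref{eqn:rho} and the diagonal term; once the clean expression $\partial L/\partial x_w^k = -H_w^k(x)^{1/\alpha}$ is in hand, the remainder is a routine comonotone-covariance estimate.
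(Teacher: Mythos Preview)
Your proposal is correct and takes a genuinely different, more elementary route than the paper's own argument. Both proofs begin with the same gradient identity $\partial L/\partial x_w^k = -H_w^k(x)^{1/\alpha}$, but diverge from there. The paper rewrites $\langle \nabla L(x), F(x)\rangle$ as $\sum_i N^i(x)^{1/\alpha}\,\langle \varphi(\tilde x^i/\tilde\pi^i(x)),\,\tilde x^i\tilde\Gamma^i(x)\rangle$ with $\varphi(z)=-z^{-1/\alpha}$ and the reduced rate matrix $\tilde\Gamma^i(x)=-I+\tilde\Pi^i(x)$, and then invokes an external lemma (Lemma~4 of \cite{B15}) bounding such inner products via the spectral gap of $\tilde\Gamma^i(x)$ and a $\chi^2$-type distance $\sum_v (x_v^i-\pi_v^i(x))^2/\pi_v^i(x)$. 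Your approach instead recognises that after the gradient identity the $k$-th summand is literally $-\mathrm{Cov}_{x^k}\bigl(Y^k,(Y^k)^\alpha\bigr)/N^k(x)$ with $Y^k(w)=H_w^k(x)^{1/\alpha}$, and settles sign and equality case by the elementary pairwise identity for comonotone covariances (Chebyshev's sum inequality). Your route is self-contained, avoids the spectral-gap machinery, and yields the exact equality characterisation directly; the paper's route situates the computation within a more general entropy-production framework for Markov rate matrices, which may be advantageous if one were to replace $H_v^i(x)^{1/\alpha}$ by a less explicit functional or to extend beyond complete sub-graphs.
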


\begin{proof}
Let $\varphi:(0,\infty)\to\R$ be defined by $z \mapsto -z^{-1/\alpha}$
and by some abuse of notation, for any $x \in \D$, $x^i =
(x^i_{\underline{1}}, \ldots, x^i_{\underline{d}^i})$ and $\pi^i(x) =
\big(\pi^i_{\underline{1}}(x), \ldots,
\pi^i_{\underline{d}^i}(x)\big)\in \triangle^i$, write 
\[
  \varphi(x^i/\pi^i(x))
  =
  \Big(\varphi\big(x^i_{\underline{1}}/\pi^i_{\underline{1}}(x)\big),
  \ldots, 
  \varphi\big(x^i_{\underline{d}^i}/\pi^i_{\underline{d}^i}(x)
  \big)\Big).    
\]
For $x \in \D$ and $i \in [m]$ define $\Pi^i(x)$ as the
$d^i$-by-$d^i$ matrix with all rows equal to $\pi^i(x)$. Now, let 
$\Gamma(x)$ be the $d$-by-$d$ rate matrix given by
\[
  \Gamma(x) = -I + \Pi(x),
\]
where $I$ is the $d$-by-$d$ identity matrix and $\Pi(x)$ is a
$d$-by-$d$ block matrix constituted by the matrices $\Pi^i(x)$
disposed along the diagonal in the order $\Pi^1(x)$, $\Pi^2(x)$,
$\ldots$, $\Pi^m(x)$, and zero block matrices elsewhere.  Let
$\Gamma^i(x) = -I^i + \Pi^i(x)$, where $I^i$ is the $d^i$-by-$d^i$
identity matrix.  We will also consider the reduced forms of $x^i$,
$\pi^i(x)$, $x^i/\pi^i(x)$ and $\Gamma^i(x)$. That is, for given $x
\in \D$, let $\tilde x^i$, $\tilde \pi^i(x)$, $\tilde x^i/\tilde
\pi^i(x)$ and $\tilde \Gamma^i(x)$ denote respectively the vectors
$[x^i_v]_{v \in \Supp{x^i}}$, $[\tilde \pi^i_v(x)]_{v \in
\Supp{x^i}}$, $[x^i_v/\pi^i_v(x)\big)]_{v \in \Supp{x^i}}$, and the
matrix $[\Gamma^i_{wv}(x)]_{w,v \in \Supp{x^i}}$, where $\Supp{x^i} =
\{v \in V^i \mid x_v^i > 0\}$.

The proof proceeds hereafter along two main steps.

\textit{\textbf{Step 1}}. We show first that there are functions
$\quasegrd^i : \D \to (0, \infty)$, $i \in [m]$, such that
\begin{equation}\label{eqn:Lyap&entropy}
  \big\langle \nabla L(x), F(x)\big\rangle
  =
  \sum_{i \in [m]} \quasegrd^i(x) \Big\langle
  \varphi\big(\tilde x^i/\tilde \pi^i(x)\big),\  \tilde x^i \tilde
  \Gamma^i(x) \Big\rangle \quad \text{for all} \quad x \in \D.
\end{equation} 

For each $i \in [m]$ and $v\in V$, direct derivation shows that 
\[
  \pi_v^i(x)
  =
  \frac{x_v^i(\partial L(x)/\partial x_v^i\big)^\alpha}{N^i(x)} 
\]
where $L$ is the Lyapunov function elicited in
Theorem~\ref{th:Lyapunov} and $N^i(x)$ is the denominator of
\eqref{eqn:pi}.  Now, let $F^i(x)$ be the $i$-th coordinate
function of the vector field $F$ in \eqref{eqn:THE_field}, that is
$F^i(x) = [F_v^i\big( x \big)]_{v \in V^i}$, and denote by $\tilde
F^i(x) =$ $[F_v^i\big( x \big)]_{v \in \Supp{x^i}}$.  To simplify
notation, in the sequel we will write $N^i(x)$ for the denominator of
\eqref{eqn:pi}. Noting that $N^i(x) > 0$ for all $x\in \D$ and $i \in
[m]$, and taking into account that $\pi_v^i(x) > 0$ for $x_v^i > 0$,
$F_v^i(x) = 0$ for $x_v^i = 0$ and $|\partial L(x)/\partial x^i_v| <
\infty$ for all $x \in \D$, $i \in [m]$, $v \in V^i$, gives
\begin{align*}
  \big\langle \nabla L(x), F(x)\big\rangle
  &=
    \sum_{i\in [m]}\sum_{v\in V^i} -\frac{\partial L(x)}{\partial 
    x^i_v}F_v^i(x)                                                  \\
  &=
    \sum_{i\in [m]}\sum_{v\in \Supp{x^i}} -\frac{\partial L(x)}{\partial 
    x^i_v}F_v^i(x)                                                  \\
  &=
    \sum_{i\in [m]} \big(N^i(x)\big)^{1/\alpha} \sum_{v\in \Supp{x^i}}
    -\bigg(\frac{x_v^i}{x_v^i \big(\partial L(x)/\partial
    x_v^i\big)^\alpha} N^i(x)\bigg)^{-1/\alpha}F_v^i(x)             \\
  &=
    \sum_{i\in [m]} \big(N^i(x)\big)^{1/\alpha} \sum_{v\in \Supp{x^i}}
    - \big(x_v^i/\pi_v^i(x)\big)^{-1/\alpha} F_v^i(x)               \\
  &=
    \sum_{i\in [m]} \big(N^i(x)\big)^{1/\alpha}
    \Big\langle \varphi\big(\tilde x^i/\tilde \pi^i(x)\big), \tilde
    F^i(x)\Big\rangle.
\end{align*}
To conclude set
\[
  \quasegrd^i(x) = N^i(x)^{1/\alpha}
\]
and note that that $\tilde F^i(x) = \tilde x^i \tilde \Gamma^i(x)$ as
$F^i(x) = x^i \Gamma^i(x)$ and $x_v^i = 0 \implies \Gamma_{w v}^i(x) =
0$ for all $x \in \D$, $i \in [m]$, and $w, v \in V^i$.

\textit{\textbf{Step 2}}. 
We will show that $\big\langle \nabla L(x), F(x)\big\rangle = 0$ if $x
\in \Fix(\pi)$ and $\big\langle \nabla L(x), F(x)\big\rangle < 0$ if
$x \in \D {\setminus} \Fix(\pi)$.  Since $F(x) = 0$ if and only if $x
\in \Fix(\pi)$ and since $\quasegrd^i(x) > 0$ for all $x \in \D$, it
suffices to show that $\big\langle \varphi\big(\tilde
x^i/\tilde\pi^i(x)\big),\ \tilde x^i \tilde \Gamma^i(x) \big\rangle
\leq 0$ for all $x \in \D {\setminus} \Fix(\pi)$ and all $i \in [m]$,
where the previous inequality holds strictly for at least one
particular $i \in [m]$, possibly depending on $x \in \D {\setminus}
\Fix(\pi)$.  Let $x \in \D {\setminus} \Fix(\pi)$ and $i \in [m]$ be
fixed but arbitrary. Observe now that $\tilde \Gamma^i(x)$ is
irreducible because $\tilde\pi_v^i(x) > 0$. Further,  $\tilde
\pi^i(x)$ is the unique invariant probability measure of $\tilde
\Gamma^i(x)$. In this case, Lemma 4 in \cite[page 128]{B15} may be used 
to get the bound 
\begin{equation}\label{eqn:lemma4}
  \Big\langle  \varphi\big(\tilde x^i/\tilde\pi^i(x)\big),\ \tilde x^i
  \tilde \Gamma^i(x) \Big\rangle
  \leq
  - \lambda^i(x) \inf_{v \in \Supp{x^i}}
  \varphi'\big(x_v^i/\pi_v^i(x) \big) \sum_{v \in \Supp{x^i}}
  \frac{(x_v^i - \pi_v^i(x))^2}{\pi_v^i(x)},
\end{equation}
where $\lambda^i(x) = \lambda(\tilde \Gamma^i(x)) > 0$ is the spectral gap
of $\tilde \Gamma^i(x)$.  To conclude, note that the right hand side of
\eqref{eqn:lemma4} is less or equal than zero because $\varphi'(z) =
\alpha^{-1} z^{-(\frac{1}{\alpha} + 1)} > 0$ for all $z > 0$ and
$\alpha > 0$. Moreover, if the right hand side of \eqref{eqn:lemma4}
were zero for all choices of $i \in [m]$, it must be the case that
$x_v^i - \pi_v^i(x) = 0$ for all $v \in V^i$ and $i \in [m]$, which is
impossible as by assumption $x \notin \Fix(\pi)$.
\end{proof}

The following lemma is a version of Proposition 3.2 in \cite{B96}
adapted to our needs.

\begin{lemma}\label{lem:LEq_is_finite}
Let $\Lambda \subset \D$ be a compact and forward invariant set for
$\Phi$. Assume that $L: \D\to\R$ is a Lyapunov function for $\Lambda$
such that the cardinal of $L(\Lambda)$ is finite. Then
$
  \CR \subset \Lambda.
$
\end{lemma}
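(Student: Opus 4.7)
The plan is to argue by contradiction: suppose there exists $x_0 \in \CR \setminus \Lambda$. Since $\Lambda$ is compact and $L$ is a strict Lyapunov function for $\Lambda$, the map $t \mapsto L(\phi_t(x_0))$ is strictly decreasing, so there are $T_0 > 0$ and $\kappa > 0$ with $L(\phi_{T_0}(x_0)) \leq L(x_0) - 3\kappa$. By continuity of $L$ and of $\phi_{T_0}$ on the compact set $\D$, one finds $\delta_0 > 0$ such that $\|y - x_0\| < \delta_0$ implies $L(\phi_{T_0}(y)) \leq L(x_0) - 2\kappa$. The strategy is then to show that every $(\delta, T_0)$-pseudo-orbit from $x_0$ to itself, with $\delta$ small enough, must terminate at a point of $L$-value strictly below $L(x_0)$, contradicting $y_k = x_0$.

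To control $L$-values along such a pseudo-orbit, I would exploit the finiteness of $L(\Lambda) = \{c_1 < \cdots < c_N\}$ by partitioning $\D$ into Lyapunov ``strips'' and ``gaps.'' Fix $\eta > 0$ smaller than $\kappa/2$ and smaller than one half of every gap $c_{j+1} - c_j$ (adjusting if $L(x_0) \in L(\Lambda)$). Let $B_j = L^{-1}\bigl((c_j - \eta, c_j + \eta)\bigr)$ and $K_\eta = \D \setminus \bigcup_j B_j$. Then $K_\eta$ is a compact subset of $\D$ disjoint from $\Lambda$; by strictness of $L$ the continuous function $y \mapsto L(y) - L(\phi_{T_0}(y))$ is strictly positive on $K_\eta$, hence bounded below by some $\mu > 0$ on that compact set. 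Choose $\delta < \delta_0$ small enough that $\|y - z\| < \delta$ implies $|L(y) - L(z)| < \eta/4$.

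For any $(\delta, T_0)$-pseudo-orbit $y_0, \ldots, y_k = x_0$ from $x_0$ to itself I would then establish two facts by bookkeeping on $L$. First, the initial step gives $L(y_1) \leq L(\phi_{t_0}(y_0)) + \eta/4 \leq L(\phi_{T_0}(y_0)) + \eta/4 \leq L(x_0) - 2\kappa + \eta/4$, since $t_0 \geq T_0$ and $L$ is non-increasing along orbits. Second, for any $i \geq 1$: if $y_i \in B_j$, then $L(y_{i+1}) \leq L(y_i) + \eta/4 < c_j + 2\eta < c_{j+1} - \eta$, so $y_{i+1}$ lies in some $B_{j'}$ with $j' \leq j$ or strictly below every strip of index $\geq j$; and if $y_i \in K_\eta$, then $L(y_{i+1}) \leq L(\phi_{t_i}(y_i)) + \eta/4 \leq L(y_i) - \mu + \eta/4$, which is a strict decrease and must eventually place $y_i$ into some strip. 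In both cases the sequence of strip indices is non-increasing from the moment the pseudo-orbit first enters a strip $B_{j^\ast}$. Since $L(y_1) \leq L(x_0) - 2\kappa + \eta/4$ and thus $c_{j^\ast} \leq L(y_1) + \eta \leq L(x_0) - 2\kappa + 5\eta/4$, it follows that $L(y_k) \leq c_{j^\ast} + \eta \leq L(x_0) - 2\kappa + 3\eta < L(x_0)$ by the choice $\eta < \kappa/2$, contradicting $y_k = x_0$.

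The main obstacle is the ``non-crossing'' property used in the third paragraph: one must ensure that jumps of $L$-size at most $\eta/4$ cannot propel the pseudo-orbit upward across the gap between two consecutive strips. This is precisely where finiteness of $L(\Lambda)$ is indispensable, since it provides a uniform positive lower bound on the spacing of adjacent strips, allowing $\eta$ to be chosen once for all below half of every such gap. Without finiteness, the $c_j$ could accumulate and a pseudo-orbit could conceivably climb back up level by level to recover the initial deficit of $2\kappa$.
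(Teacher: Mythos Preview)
The paper does not prove this lemma; it simply cites it as an adaptation of Proposition~3.2 in Bena\"im (1996). So there is no in-paper argument to compare against. Your proposal is essentially the standard proof of that proposition: assume $x_0\in\CR\setminus\Lambda$, use strict decrease of $L$ along the orbit of $x_0$ to produce a definite drop, then exploit the finiteness of $L(\Lambda)$ to show that no $(\delta,T)$-pseudo-orbit with small enough $\delta$ can recover that drop and return to $x_0$. The strategy is correct and the identification of finiteness as the source of a uniform gap between Lyapunov levels is exactly the point.

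A couple of constants need tightening before this is a complete proof. First, in the strip step you write $L(y_{i+1})<c_j+2\eta<c_{j+1}-\eta$; the second inequality requires $c_{j+1}-c_j>3\eta$, whereas you only assumed $\eta$ smaller than \emph{half} the minimal gap. Take $\eta$ smaller than one third (or one quarter) of the minimal gap instead. Second, in the $K_\eta$ step you need the net change $-\mu+\eta/4$ to be negative; since $\mu$ was fixed \emph{after} $\eta$ (it depends on $K_\eta$), this is not automatic from $|L(y)-L(z)|<\eta/4$. The fix is to shrink $\delta$ further so that $\|y-z\|<\delta$ implies $|L(y)-L(z)|<\min(\eta/4,\mu/2)$; then each step in $K_\eta$ decreases $L$ by at least $\mu/2$. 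Finally, the bookkeeping in the last paragraph is cleanest if you track the index $J(y)=\max\{j:L(y)>c_j-\eta\}$ and verify directly that $J(y_{i+1})\le J(y_i)$ in both cases; this yields $L(y_k)<c_{J(y_1)}+\eta$, and the initial drop of $2\kappa$ combined with $\eta<\kappa/2$ forces $c_{J(y_1)}+\eta<L(x_0)$, giving the contradiction. With these adjustments your argument goes through.
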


\begin{lemma}\label{lem:L_is_constant}
Let $L$ be as defined in \eqref{eqn:Lyapunov-pwr-H}, then
$|L(\Fix(\pi))| < \infty$.
\end{lemma}
\begin{proof}
Recall that $\Fix(\pi) = \cup_{S \in \V} \D(S)$. Let $S \in \V$ and
$x, y \in \D(S)$ be arbitrary. Since $\V$ is finite, it is sufficient
to show that $L(x) = L(y)$. To show this, we set $z(t) = tx + (1-t)y$
for $t \in [0,1]$ and show that derivative of $t \mapsto L(z(t))$ is
zero. Note that, since $\D(S)$ is convex, $z(t) \in \D(S) \subset
\Fix(\pi)$. As a consequence, it follows that $z_v^i = \pi_v^i(z) =
z_v^i H_v^i(z)/N^i(z)$, where $N^i(z)$ does not depend on $v$. Thus,
for $v \in S^i$, we have $z_v^i > 0$ and so $H_v^i(z) = N^i(z)$ for
all $v \in S^i$. Note that $\partial L(z)/\partial z_v^i =
-(H_v^i(z))^{1/\alpha} = -(N^i(z))^{1/\alpha}$, thus the derivative of
$L$ along the direction of $z(t)$ is
\[
\frac{d L(z(t))}{dt} 
 =
 \sum_{i \in [m], v \in S^i} \frac{\partial L(z)}{\partial
   z_v^i}\frac{d z_v^i(t)}{dt} \\ 
 = - \sum_{i \in [m]} \big(N^i(z(t))\big)^{1/\alpha}  \frac{d}{dt}
 \sum_{v \in S^i}  z_v^i(t) = 0. 
\]
The last equality follows because $z^i(t) \in \triangle^i$ and
$z_v^i(t) = 0$ for all $v \notin S^i$ and $t \in [0,1]$. 
\end{proof}

The assertion ($iv$) in Theorem~\ref{th:det} follows by taking
$\Lambda = \Fix(\pi)$ and using Lemmas \ref{lem:limitset},
\ref{lem:LEq_is_finite} and \ref{lem:L_is_constant}. 

\subsubsection{Proof of assertion $(v)$ of Theorem~\ref{th:det}}

Note first that
if $S\in \V$ is such that $\det R_\rpar(S) \neq 0$, then the
associated system \eqref{eqn:THE_system} has a unique solution,
i.e. $|\D(S)| = 1$. Now, if $\det R_\rpar(S) \neq 0$ for any $S \in
\V$, it then follows by Theorem~\ref{th:det}.(i) and by the fact that
$\V$ is finite that $\Fix(\pi)$ is finite. $\Fix(\pi)$ is clearly
forward invariant by $\Phi$ as from \eqref{eqn:THE_field} it is
constituted by points $x \in \D$ such that $F(x) = 0$. Let $L$ be the
strict Lyapunov function in Theorem~\ref{th:Lyapunov}. Setting
$\Lambda = \Fix(\pi)$ in Lemma~\ref{lem:LEq_is_finite} and observing
that $|L(\Fix(\pi))| < \infty$ shows that almost surely $\CR \subset
\Fix(\pi)$. This, combined with the second assertion of
Lemma~\ref{lem:limitset}, gives $\Prb\big(\limset \subset
\Fix(\pi)\big) = 1$ and concludes the proof of assertion $(iv)$ of
Theorem~\ref{th:det}.

\subsection{Proof of Corollaries~\ref{cor:conv_is_generic} and
  \ref{cor:emptyintesection}} 

\begin{proof}[Proof of Corollary~\ref{cor:conv_is_generic}]
Let $\V$ be given by  $\G$ and consider $S \in \V$ fixed but
arbitrary. Let $\eta = (\eta_v^i)_{v, i}$ and $\rpar =
(\rpar_v^{ij})_{v,i,j}$ for $i, j \in [m]$ and $v \in V^i$. Define
$\Theta = \big\{(\alpha, \eta, \rpar) \mid \alpha > 0, \eta_v^i > 0,
\rpar_v^{ij} \in \R \text{ satisfying \eqref{eqn:rho} and
\eqref{eqn:eta}}\big\}$ that is, $\Theta$ is the set of possible
values for $\alpha$, $\eta_v^i$ and $\rpar_v^{ij}$. Observe that
$\Theta \subset \R_+ \times \R_+^d \times \R^r$ where $\R_+ =
(0,\infty)$ and $r = d(d+1)/2$. Consider now the set of $d$-by-$d$
symmetric matrices with real entries, $M_{d,d}(\R)$, and then the
bijection $f : \Theta \to M_{d,d}(\R)$ defined by $\rpar \mapsto
R_\rpar(S)$, where $R_\rpar(S)$ is the matrix of coefficients of the
system \eqref{eqn:THE_system}. Let $\Theta_0(S) \subset \Theta$ be
defined as $\Theta_o(S) = \big\{(\alpha, \eta, \rpar) \in \Theta \mid
\det f(\rpar) = 0\big\}$, the set of parameters for which $\det
R_\rpar(S) = 0$.  As the set of matrices $M_{d,d}(\R)$, seen as a
subset of $\R^r$, that are singular, is a Lebesgue null set, it follows
that $\Theta_o(S)$ is also a null set. Thus, for almost all $\rpar_v^{ij}$
we have that $\det R_\rpar(S) \neq 0$. As this holds for each $S \in
\V$ and $\V$ is finite, the claim follows from item $(iv)$ in
Theorem~\ref{th:det}.
\end{proof}

\begin{proof}[Proof of Corollary~\ref{cor:emptyintesection}] 
Since $\Supp{x^i} \cap \Supp{x^j} = \varnothing$ for all $i, j \in
[m]$ such that $i \neq j$, and $\eta_v^i = \eta_w^i$ for all $v, w \in
\Supp{x^i}$, we have that
\eqref{eqn:THE_system} reduces to
\begin{equation}\label{eqn:linear-reduced}
   \forall i \in [m], \, \forall v, w \in \Supp{x^i}: \,\,
   \rpar_v^{ii} x_v^i  =  \rpar_w^{ii} x_w^i. 
\end{equation}

Suppose that \eqref{enu:C1} or \eqref{enu:C2} holds for each $i \in
[m]$. To show that $x \in \Fix(\pi)$, we apply Theorem \ref{th:det}
and show that Equation \eqref{eqn:linear-reduced} is satisfied. Fix $i
\in [m]$. If \eqref{enu:C1} is satisfied, then $\rpar_v^{ii} x_v^i = 0 =
\rpar_w^{ii} x_w^i$ for all $v, w \in \Supp{x^i}$. If \eqref{enu:C2} is
satisfied then $\rpar_v^{ii} x_v^i = 1/ \big (\sum_{k \in \Supp{x^i}}
1/\rpar_k^{ii} \big) = \rpar_w^{ii} x_w^i$ for all $v, w \in
\Supp{x^i}$.

Suppose now that $x \in \Fix(\pi)$. It follows by Theorem \ref{th:det}
that Equation \eqref{eqn:linear-reduced} holds. Next we show that, for
each $i \in [m]$, \eqref{enu:C1} or \eqref{enu:C2} holds. Fix $i \in
[m]$. If \eqref{enu:C1} satisfied we are done. Assume now that
condition \eqref{enu:C1} is not satisfied. Then there is $w \in
\Supp{x^i}$ such that $\rpar_w^{ii} \neq 0$ and it follows by Equation
(\ref{eqn:linear-reduced}) that $\rpar_v^{ii} = \rpar_w^{ii} x_w^i
\big /x_v^i$ for all $v \in \Supp{x^i}$. Since $x_w^i \big /x_v^i > 0$
for all $v \in \Supp{x^i}$, we have that all $\rpar_v^{ii}$, $v \in
\Supp{x^i}$, have the same sign as $\rpar_w^{ii} \neq 0$, that is, all
$\rpar_v^{ii}$, $v \in \Supp{x^i}$, are positive or all are negative.
Further, from (\ref{eqn:linear-reduced}), we have that
$x_w^{i}/x_v^{i} = \rpar_v^{ii} \big /\rpar_w^{ii}$ for all $v, w \in
\Supp{x^i}$. Thus, summing both sides over $w \in \Supp{x^i}$ and
taking into account the identity $\sum_{w \in \Supp{x^i}} x_w^i = 1$,
gives $1/x_v^{i} = \sum_{w \in \Supp{x^i}}
(1/\rpar_w^{ii})/(1/\rpar_v^{ii})$ for all $v \in \Supp{x^i}$. Observe
that this is equivalent to \eqref{enu:C2} and so the proof is
concluded.
\end{proof}

\section{Proofs of Theorems~\ref{th:no-toface} and
  \ref{th:no-tointerior}}
The proof of Theorem~\ref{th:no-toface} is inspired by the proof of
Theorem 1.3 \cite{P92} which concerns the non-convergence of a single
self reinforced random walk to points that are on the boundary of the
$(d-1)$-simplex $\triangle$. The proof of Theorem~\ref{th:no-toface}
evokes lemmas \ref{Coupling-Working} and \ref{lem:b.r.v}. Both these
lemmas are stated in this section but their proofs are presented in
the Appendix. We need to introduce the following definitions. For any
$z\in \R$, let $\floor*{z}$ be the greatest integer less than or equal
to $z$. For any $\delta > 0$, and sufficiently large $n \in
\mathbb{N}$ such that $n \leq \floor*{n(1+\delta)} - 1$, let
\begin{equation}
\label{eqn:Tdef}
  T_{n,\delta} = \big\{n, n+1, \ldots, \floor*{n(1+\delta)}
  - 1 
  \big\}.
\end{equation}

\begin{lemma}\label{Coupling-Working}
Let $p\in \D$ be a point satisfying the condition \eqref{eqn:abar}.
If $\Prb\big(\lim_{n\to\infty} X(n) = p\big) > 0$, then there are
$\delta >0$, $b>0$, $n_0 > 0$, and $A\in \F$ with $\Prb(A)>0$, such
that
\begin{equation}
 \label{eqn:b1bber0}
 \Prb\Big\{\Prb\big(W^i(m+1) = k \mid \F_m  \big)
 \geq X^i_k(n)(1+b)\ \Big|\ A\Big\} = 1 \quad \text{for all }\ m \in 
  T_{n,\delta} \ \ \text{and}\  \ n > n_0.
\end{equation}
\end{lemma}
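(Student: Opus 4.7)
My plan follows the standard Pemantle template for obstructing convergence to a boundary point: use the hypothesis that, near $p$, the expected increment in the $k$-th coordinate of walk $i$ dominates the current fraction, and package this into a multiplicative boost that persists over the block $T_{n,\delta}$.

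First I would promote \eqref{eqn:abar} to a uniform neighbourhood estimate. Because of \eqref{eqn:eta}, the denominator $\sum_{w \in V^i} x_w^i H_w^i(x)$ in \eqref{eqn:pi} is bounded below by a positive constant on $\D$, so the ratio
\[
  \frac{\pi_k^i(x)}{x_k^i} \;=\; \frac{H_k^i(x)}{\sum_{w\in V^i} x_w^i H_w^i(x)}
\]
extends continuously to the face $\{x_k^i = 0\}$ that contains $p$. Fixing any $c$ with $1 < c < \lim_{x \to p} \pi_k^i(x)/x_k^i$, there is therefore an open neighbourhood $U$ of $p$ in $\D$ on which $\pi_k^i(x) \geq c\, x_k^i$.

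Next I would compare $X_k^i(m)$ to $X_k^i(n)$ across the block. Since the numerator in \eqref{eqn:occupation_measure} is non-decreasing in $m$, one has the deterministic inequality $X_k^i(m) \geq X_k^i(n)(d^i + n)/(d^i + m)$ for every $m \geq n$. For $m \in T_{n,\delta}$ one has $m - n < n\delta$, hence $(d^i + m) - (d^i + n) = m - n < n\delta \leq 2\delta(d^i + n)$ for every $n \geq 1$; equivalently, $X_k^i(m) \geq X_k^i(n)/(1+2\delta)$. Now choose $\delta, b > 0$ small enough that $c/(1+2\delta) \geq 1+b$, for instance $\delta = (c-1)/4$ and $b = (c-1)/(c+1)$.

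Finally I would extract $A$. The events $\{X(n) \in U \text{ for all } n \geq n_0\}$ are non-decreasing in $n_0$ and their union contains $\{\lim_n X(n) = p\}$, so continuity of probability from below gives
\[
  \lim_{n_0 \to \infty} \Prb\bigl(X(n) \in U \text{ for all } n \geq n_0\bigr) \;\geq\; \Prb\bigl(\lim_n X(n) = p\bigr) \;>\; 0.
\]
Choosing $n_0$ large enough that $A := \{X(n) \in U \text{ for all } n \geq n_0\}$ has positive probability, for every $n > n_0$ and $m \in T_{n,\delta}$ one has $m \geq n > n_0$, so $X(m) \in U$ on $A$, and
\[
  \Prb\bigl(W^i(m+1) = k \mid \F_m\bigr) \;=\; \pi_k^i(X(m)) \;\geq\; c\, X_k^i(m) \;\geq\; \frac{c}{1+2\delta}\, X_k^i(n) \;\geq\; (1+b)\, X_k^i(n),
\]
which is \eqref{eqn:b1bber0}. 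The only delicate bookkeeping is that the estimate must hold uniformly across the whole block $T_{n,\delta}$; this is why $A$ is defined as the tail event confining \emph{every} $X(n')$ with $n' \geq n_0$ to $U$ rather than just pinning down $X(n)$ at the starting time.
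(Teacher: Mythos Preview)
Your proof is correct and follows essentially the same approach as the paper's: both extract a neighbourhood of $p$ on which $\pi_k^i(x)/x_k^i$ exceeds a constant strictly greater than one, define $A$ as the tail event confining $X$ to that neighbourhood from time $n_0$ onward, and then use the monotonicity of the numerator of $X_k^i$ in \eqref{eqn:occupation_measure} to compare $X_k^i(m)$ with $X_k^i(n)$ across the block $T_{n,\delta}$. Your explicit choice $\delta=(c-1)/4$, $b=(c-1)/(c+1)$ is in fact a bit cleaner than the paper's, which tracks the $n$-dependent quantity $b(n)=\big((a-1)/2-(d^i/n+\delta)\big)/\big(d^i/n+1+\delta\big)$ and sets $b=b(n_0)$.
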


\begin{lemma}\label{lem:b.r.v}
Let $\Ber(n) \in \{0,1\}$, $n = 1,2, \ldots$ be Bernoulli random
variables defined on a probability space $(\Omega, \F, \Prb)$ endowed
with a filtration $\F_n$, not necessarily generated by
$\Ber(n)$. Assume that $\Ber(n)$ is adapted to $\F_n$ and set $\bar
\Ber(n) = \big(1+\sum_{\ell=1}^n \Ber(\ell) \big)/(q+n)$. Then, there
are no $\delta >0$, $b>0$, $n_0 > 0$, $q > 0$, and $A \in \F$ with
$\Prb(A)>0$ such that event
\begin{equation}
 \label{eqn:b1bber}
 \Prb\Big\{\Prb \big(\Ber(m+1) = 1 \mid \F_m \big) \geq \bar
  \Ber(n)(1+b)\ \Big|\ A\Big\} = 1
  \quad \text{for all }\ m \in
  T_{n,\delta} \ \ \text{and}\ \ n > n_0.
\end{equation}
\end{lemma}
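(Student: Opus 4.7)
The plan is to argue by contradiction: suppose there exist $\delta,b,n_0,q>0$ and $A\in\F$ with $\Prb(A)>0$ for which the displayed inequality holds. Since $m\in T_{m,\delta}$ for every $m\ge 1$, one may take $n=m$ in the hypothesis to obtain the pointwise bound
\[
  \Prb\bigl(\Ber(m+1)=1\bigm|\F_m\bigr)\ge (1+b)\,\bar\Ber(m)=(1+b)\,\frac{1+S_m}{q+m},
\]
almost surely on $A$, for all $m>n_0$, where $S_m=\sum_{\ell=1}^{m}\Ber(\ell)$. This multiplicative drift inequality is the heart of the argument.

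I would next set up the Doob decomposition $S_m=M_m+ a_m$ with $a_m=\sum_{\ell=1}^{m}\Prb(\Ber(\ell)=1\mid\F_{\ell-1})$ the compensator and $M_m$ an $\F_m$-martingale whose increments are bounded by $1$. The pointwise bound above yields $a_{m+1}-a_m\ge (1+b)/(q+m)$ on $A$, so $a_m\to\infty$ almost surely on $A$. The predictable quadratic variation satisfies $\langle M\rangle_m\le a_m$, so Kronecker's lemma applied to the $L^2$-summable series $\sum (M_m-M_{m-1})/a_m$, or equivalently the strong law for square-integrable martingales, gives $M_m/a_m\to 0$ almost surely on $\{a_\infty=\infty\}\supset A$. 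Consequently $S_m/a_m\to 1$ almost surely on $A$.

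With this asymptotic equivalence in hand, the next step is a discrete Gronwall iteration. Fix $\epsilon\in(0,b/(1+b))$ so that $c:=(1+b)(1-\epsilon)>1$. Almost surely on $A$ there exists a random index $L$ with $S_m\ge(1-\epsilon)a_m$ for all $m\ge L$, and feeding this back into the multiplicative drift inequality gives $a_{m+1}\ge a_m\bigl(1+c/(q+m)\bigr)$ for such $m$. Iterating this product and using $\sum_{\ell\le m}1/(q+\ell)\sim\log m$ produces $a_m\ge C m^{c}$ for some random $C>0$ and all sufficiently large $m$. On the other hand each summand of $a_m$ lies in $[0,1]$, so $a_m\le m$ identically; since $c>1$, this contradicts $a_m\gtrsim m^{c}$ as $m\to\infty$, and no such $A$ can exist.

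I expect the main obstacle to be the step that converts the multiplicative drift inequality on $A$ into a deterministic inequality of the form $a_{m+1}\ge a_m(1+c/(q+m))$, because this requires replacing $S_m$ by $a_m$ up to small relative error. Azuma-type concentration only yields $|M_m|=O(\sqrt m)$, which is not automatically smaller than $a_m$ when $a_m$ is only known a priori to grow logarithmically; it is the $L^2$ martingale strong law, exploiting $\langle M\rangle_m\le a_m$ rather than the trivial $O(m)$ bound, that makes the bootstrap go through on $\{a_\infty=\infty\}$ and closes the contradiction.
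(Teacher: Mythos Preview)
Your argument is correct and takes a genuinely different, more elementary route than the paper's proof. One small quibble: the claim ``$m\in T_{m,\delta}$ for every $m\ge 1$'' is only true once $m\delta\ge 1$, but this is harmless since you may enlarge $n_0$.

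The paper does not collapse the hypothesis to the single choice $n=m$. Instead it keeps the full block structure, setting $n_{\tau+1}=\lfloor n_\tau(1+\delta)\rfloor$ and proving a multiplicative recursion $\bar B_{\tau+1}\ge F(Z_{\tau+1})\bar B_\tau$, where $Z_{\tau+1}$ is the (normalised) number of successes in the block $T_{n_\tau,\delta}$. After taking logarithms and expectations on $A$, the paper must show $\liminf_\tau \E[\log F(Z_{\tau+1})\mid A]>0$; this requires a coupling of the block increments with i.i.d.\ Bernoullis, a Chebyshev bound, and a separate Borel--Cantelli argument that $S_\tau\to\infty$ on $A$. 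The contradiction is then that $\E[\log\bar B_\tau\,\Ind_A]\to+\infty$ while $\log\bar B_\tau\le 0$.

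Your approach replaces all of this with the Doob decomposition $S_m=M_m+a_m$, the observation $\langle M\rangle_m\le a_m$, and the martingale strong law to get $S_m/a_m\to 1$ on $A$; the Gronwall iteration $a_{m+1}\ge a_m(1+c/(q+m))$ with $c>1$ then forces $a_m\gtrsim m^c$, contradicting $a_m\le m$. This is shorter and uses only standard tools; the paper's block-and-coupling argument, by contrast, makes more explicit use of the two-parameter structure of the hypothesis and would adapt more readily to settings where the single-step bound at $n=m$ is unavailable.
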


\begin{proof}[Proof of Theorem~\ref{th:no-toface}]
Let $p\in \D$ be a point satisfying \eqref{eqn:abar}. Suppose by
contradiction that $\Prb\big(\lim_{n\to\infty} X(n) = p\big) >
0$. According to Lemma \ref{Coupling-Working}, there are $\delta >0$,
$b>0$, $n_0 > 0$, and $A\in \F$ with $\Prb(A)>0$, such that the
condition in \eqref{eqn:b1bber0} holds. Let $\Ber(n) = \Ind\{W^i(n) =
k\}$, $\bar \Ber(n) = \big(1 + \sum_{\ell=1}^n \Ber(\ell)
\big)/(q+n)$, and $q = d^i$.  In this case, both conditions
\eqref{eqn:b1bber0} and \eqref{eqn:b1bber} are the same. This leads to
a contradiction because according to Lemma \ref{lem:b.r.v}, there are
no $\delta >0$, $b>0$, $n_0 > 0$, and $A\in\F$ with $\Prb(A)>0$ such
that the condition \eqref{eqn:b1bber} holds. This completes the proof
of the Theorem.
\end{proof}

The proof of Theorem~\ref{th:no-tointerior} relies on Theorem~1 in
\cite{P90}, which for completeness we present adapted to our needs
as Theorem~\ref{th:pemantle_interior} below. We need the
following definition.

\begin{definition}[Linearly unstable point]\label{def:linear_unstable}
Let $F$ be the vector field in \eqref{eqn:THE_field} and $p \in \D$ be
such that $F(p)=0$. Let $\JacF{p}$ be the Jacobian matrix of $F$ at
$p$ and denote by $\sigma\big(\JacF{p}\big)$ its spectrum. The point
$p$ is said to be hyperbolic if $\mathfrak{R}(\beta) \neq 0$ for all
$\beta \in \sigma\big(\JacF{p}\big)$. An hyperbolic point $p$ is said
to be linearly unstable if there is at least one $\beta \in
\sigma\big(\JacF{p}\big)$ such that $\mathfrak{R}(\beta) > 0$.
\end{definition}

\begin{theorem}[R. Pemantle, \cite{P90}]\label{th:pemantle_interior}
Let $X = \{X(n); n \geq 0\}$ be the process satisfying
(\ref{eqn:SA}) with $\E[U(n)\mid\F_n] = 0$ and such that $\D$ is
invariant for the vector field $F$ in \eqref{eqn:THE_field}. Let $p
\in \D^\circ$ be linearly unstable and let $\Bhood(p)$ be a
neighbourhood of $p$. Assume that there are constants $c_1$, $c_2 > 0$
for which the following two conditions are satisfied whenever $n$ is 
sufficiently large:
\begin{enumerate}[$(h_\bgroup1\egroup)$, nosep]
\item\label{enu:h1} $\E\big[\, \max\{\langle\theta, U(n)\rangle,
  0\}\mid X(n) = x\big] \geq c_1$ for all $x \in \Bhood(p)$ and
  $\theta \in \TsD$ with $\|\theta\|=1$
\item\label{enu:h2} $\|U(n)\| \leq c_2$ 
\end{enumerate} Assume that the vector field $F$ is at least $C^2$ so
that the stable manifold theorem holds. In this case
\[
  \Prb\Big(\lim_{n\to\infty} X(n) = p\Big) = 0.
\]
\end{theorem}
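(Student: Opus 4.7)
The plan is to apply Theorem~\ref{th:pemantle_interior} to the process $X$ and verify its hypotheses one by one. By Lemma~\ref{lem:SA}, $X$ already satisfies the recursion \eqref{eqn:SA} with $F(x) = -x + \pi(x)$ smooth on $\D$ (the denominator in \eqref{eqn:pi} is strictly positive thanks to \eqref{eqn:eta}, so $\pi$, and hence $F$, is of class $C^\infty$), and $U(n) = \xi(n) - \E[\xi(n)\mid \F_n]$ is by construction a martingale difference with $\E[U(n)\mid \F_n]=0$. Forward invariance of $\D$ under $F$ is established in the discussion around \eqref{eqn:THE_flow}. The spectral hypotheses translate directly: since $\JacF{p} = -I + D\pi(p)$, we have $\sigma(\JacF{p}) = \{\lambda - 1 : \lambda \in \sigma(D\pi(p))\}$, so $\mathfrak{R}(\lambda) \neq 1$ for all $\lambda \in \sigma(D\pi(p))$ is hyperbolicity of $p$ (all $\mathfrak{R}(\beta) \neq 0$), and $\mathfrak{R}(\lambda) > 1$ for some $\lambda \in \sigma(D\pi(p))$ is linear instability of $p$ in the sense of Definition~\ref{def:linear_unstable}.

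For the bounded-increment condition $(h_2)$, each coordinate $\xi^i_v(n) = \Ind\{W^i(n+1)=v\}$ lies in $\{0,1\}$ and its conditional mean is in $[0,1]$, so $\|U(n)\|\leq \sqrt{d}=:c_2$. The condition $(h_1)$ is the main technical point. Since $p \in \iD$, continuity of $\pi$ together with $\pi(p) = p$ yields a neighbourhood $\Bhood(p) \subset \iD$ and $\delta>0$ such that $\pi^i_v(x) \in [\delta, 1-\delta]$ for all $i \in [m]$, $v\in V^i$, and $x \in \Bhood(p)$. Conditionally on $X(n)=x$, the assumed conditional independence of $W^1(n+1), \ldots, W^m(n+1)$ makes $\xi^i(n)-\pi^i(x)$ independent across $i$; the $i$-th has covariance $\Sigma^i(x) = \operatorname{diag}(\pi^i(x)) - \pi^i(x)\pi^i(x)^\top$, whose kernel is the span of $(1,\ldots,1)^\top$. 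For any unit vector $\theta = (\theta^1, \ldots, \theta^m) \in \TsD$, each $\theta^i$ is orthogonal to $(1,\ldots,1)^\top$ by the very definition of $\TsD$, and independence across walks gives
\[
  \Var\big(\langle \theta, U(n)\rangle \,\big|\, X(n)=x\big)
  = \sum_{i=1}^m (\theta^i)^\top \Sigma^i(x)\, \theta^i.
\]
On the orthogonal complement of $(1,\ldots,1)^\top$, $\Sigma^i(x)$ is positive definite with smallest eigenvalue bounded below by some $\kappa(\delta)>0$ uniformly in $x \in \Bhood(p)$, so the above variance is at least $\kappa(\delta)\|\theta\|^2 = \kappa(\delta)$. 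Since $\langle \theta, U(n)\rangle$ is mean-zero and bounded by $c_2$, the elementary bound $\E[Y_+] = \tfrac12\E[|Y|] \geq \Var(Y)/(2\|Y\|_\infty)$ for bounded mean-zero $Y$ delivers $\E[\max\{\langle \theta, U(n)\rangle,0\} \mid X(n)=x] \geq \kappa(\delta)/(2c_2) =: c_1$, uniformly in $\theta$ on the unit sphere of $\TsD$ and $x \in \Bhood(p)$.

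The main obstacle is the uniform lower bound on the conditional variance in $(h_1)$: we must simultaneously exploit the conditional independence across walks (to avoid degeneracy produced by correlated noise) and the interior location of $p$ (to keep every transition probability $\pi^i_v(x)$ bounded away from $0$ and $1$, so that each multinomial covariance $\Sigma^i(x)$ is uniformly non-degenerate on the tangent simplex). Once $(h_1)$ and $(h_2)$ are in place, Theorem~\ref{th:pemantle_interior} applies and gives $\Prb\big(\lim_{n\to\infty} X(n)=p\big)=0$.
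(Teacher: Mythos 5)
You have proved the wrong statement. The theorem in question is Pemantle's general non-convergence result itself (Theorem~\ref{th:pemantle_interior}): given \emph{any} process satisfying \eqref{eqn:SA} with the stated noise conditions $(h_1)$--$(h_2)$ and a linearly unstable $p\in\D^\circ$, conclude $\Prb(\lim_n X(n)=p)=0$. Your proposal opens with ``the plan is to apply Theorem~\ref{th:pemantle_interior} to the process $X$ and verify its hypotheses one by one'' — that is circular, since the theorem you invoke is exactly the one to be established. What you have actually written is, in substance, a proof of Theorem~\ref{th:no-tointerior}, the paper's \emph{application} of Pemantle's theorem to the reinforced-walk model: the spectral translation $\sigma(\JacF{p})=\{\lambda-1:\lambda\in\sigma(D\pi(p))\}$, the trivial bound for $(h_2)$, and the verification of $(h_1)$ via conditional independence of the walks and non-degeneracy of the multinomial covariances on $\TsD$ near an interior point. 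That part is correct and in fact more self-contained than the paper's treatment of Theorem~\ref{th:no-tointerior}, which defers the $(h_1)$ check to Lemma 9 of \cite{RPP22}; but it is not a proof of the statement at hand.

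For the statement itself the paper gives no proof — it is quoted, adapted, from Theorem 1 of \cite{P90} — so a blind proof would need to reproduce Pemantle's argument: use the $C^2$ regularity and the stable manifold theorem to linearise the semi-flow of $F$ at the hyperbolic point $p$ and isolate the unstable directions; then show that the component of $X(n)-p$ transverse to the local stable manifold cannot tend to zero, because condition $(h_1)$ forces the noise to deliver a uniformly positive expected push in any prescribed unit direction of $\TsD$ while $(h_2)$ and the step sizes of order $1/n$ keep the increments controlled, so a martingale/variance estimate shows that with probability one the process is eventually expelled from every sufficiently small neighbourhood of $p$ along the unstable directions. None of this stochastic core appears in your proposal, so the proof of the stated theorem is missing entirely; what you supplied belongs to (and essentially matches) the proof of Theorem~\ref{th:no-tointerior} instead.
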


\begin{proof}[Proof of Theorem~\ref{th:no-tointerior}]
Let $p \in \D^\circ$ be a fixed point of $\pi$. For the vector
field $F$ in \eqref{eqn:THE_field} it then follows that $F(p) = 0$.
Further 
\begin{align*}
    \det\big((DF(p) - (\lambda - 1)I\big) = 0
  &\ \ \Leftrightarrow \ \ 
  \det\big((D\pi(p) - I) - (\lambda - 1)I\big) = 0 \\
  &\ \ \Leftrightarrow \ \ 
    \det\big(D\pi(p) - \lambda I\big) = 0,
\end{align*}
showing that $\lambda$ is an eigenvalue of $D\pi(p)$ if, and only if
$\lambda - 1$ is an eigenvalue of $\JacF{p}$. Taking into account the
hypotheses for $\lambda$ in the statement of the theorem and setting
$\beta = \lambda -1$ in Definition~\ref{def:linear_unstable} shows
that $p$ is linearly unstable.

From \eqref{eqn:THE_field} and \eqref{eqn:pi} it is relatively simple
to observe that $F$ is $C^2$. The rest of the proof consists in
verifying the conditions \ref{enu:h1} and \ref{enu:h2} of
Theorem~\ref{th:pemantle_interior}. \ref{enu:h2} is immediate from the
definition of $U(n)$ in Lemma~\ref{lem:SA}. The verification of
\ref{enu:h1} follows along the lines of the proof of Lemma 9 in
\cite{RPP22} which rests on the following two facts. First, the walks
$W^i(n+1)$, $i\in [m]$, are independent given $\F_n$. Second, $\pi(x)
>0$ holds for any $x$ in a sufficiently small neighbourhood of $p$ in
$\D^\circ$. The former condition is satisfied in our case by the
definition of the process $W(n)$, as observed in the paragraph bellow
of equation \eqref{eqn:eta}. The latter follows in view of the
definition of $\pi$ in \eqref{eqn:pi}, taking into account
\eqref{eqn:eta}.
\end{proof}

\section{Proofs of the results for complete
  graphs}\label{sec:ProfCompletGraphs} 
We start with the following lemma which will be used in the proof of
Theorem~\ref{th:complete_G}. We recall the notation from
\eqref{eqn:sizes}, namely, for $p = (p^1, p^2) \in \D$ let $U(p) =
\Supp{p^1}\cap \Supp{p^2}$ and $u(p) = |U(p)|$.

\begin{lemma}\label{lem:samesupport}
Suppose that \eqref{eqn:rhoij} and \eqref{eqn:etaiv} hold with
$\epsilon = 0$. Let $p = (p^1, p^2)\in \D$.  Suppose
$\Supp{p^1}\cap\Supp{p^2} = \varnothing$, then $p \in
\Fix(\pi)$. Suppose $\Supp{p^1}\cap\Supp{p^2} \neq \varnothing$, then
$p \in \Fix(\pi)$ if and only if $\Supp{p^1}=\Supp{p^2}$ and $p_v^i =
1/u(p)$, $i=1, 2$, for all $v \in \Supp{p^1}$.
\end{lemma}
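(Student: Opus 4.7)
The plan is to reduce the fixed-point condition to the linear system \eqref{eqn:THE_system} via Theorem~\ref{th:det}.$(i)$, exploit the simple form of the $H_v^i$ under the hypotheses, and then analyse two cases according to whether the supports overlap.

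First I would note that since $G^1=G^2=K_\kappa$, we have $V^1=V^2=V$ and $I_v=\{1,2\}$ for every $v$. Denoting by $\eta$ the common value of the $\eta_v^i$, the substitution $\rpar_v^{ii}=-\epsilon=0$, $\rpar_v^{ij}=-1$ for $i\neq j$ into \eqref{eqn:pi} gives
\[
  H_v^1(x)=(\eta-x_v^2)^\alpha, \qquad H_v^2(x)=(\eta-x_v^1)^\alpha.
\]
By Theorem~\ref{th:det}.$(i)$, $p\in\Fix(\pi)$ iff $p\in\D(S)$ with $S=\Supp{p}$, which after raising to the power $1/\alpha$ amounts to the pair of conditions
\[
  p_v^2=p_w^2\ \ \text{for all }v,w\in\Supp{p^1},\qquad
  p_v^1=p_w^1\ \ \text{for all }v,w\in\Supp{p^2},
\]
together with $\sum_{v\in\Supp{p^i}}p_v^i=1$ and $p_u^i=0$ for $u\notin\Supp{p^i}$.

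For the first case ($\Supp{p^1}\cap\Supp{p^2}=\varnothing$), every $v\in\Supp{p^1}$ lies outside $\Supp{p^2}$, so $p_v^2=0$, and the constancy condition on $\Supp{p^1}$ is trivially satisfied; the symmetric argument handles the second constancy condition, yielding $p\in\Fix(\pi)$.

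For the second case ($\Supp{p^1}\cap\Supp{p^2}\neq\varnothing$), I would pick $v_0\in\Supp{p^1}\cap\Supp{p^2}$ so that $p_{v_0}^2>0$; then the constancy condition on $\Supp{p^1}$ forces $p_w^2=p_{v_0}^2>0$ for every $w\in\Supp{p^1}$, i.e.\ $\Supp{p^1}\subseteq\Supp{p^2}$, and the symmetric argument gives the reverse inclusion, so $\Supp{p^1}=\Supp{p^2}$. Combining with $\sum_{v\in\Supp{p^i}}p_v^i=1$ and the constancy of $p_v^i$ on $\Supp{p^i}$ yields $p_v^i=1/u(p)$. Conversely, any $p$ satisfying these equalities manifestly solves the reduced system, closing the equivalence. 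There is no real obstacle here; the only point requiring care is to observe that the constancy of $p_v^2$ must be taken over the index set $\Supp{p^1}$ (not $\Supp{p^2}$), since it comes from the equations indexed by $i=1$, which is precisely what forces the support inclusion in the overlapping case.
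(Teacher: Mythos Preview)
Your proof is correct and follows essentially the same approach as the paper: both reduce to the linear system \eqref{eqn:THE_system} via Theorem~\ref{th:det}.$(i)$ and exploit that the fixed-point equations for $i=1$ force $p_v^2$ to be constant over $\Supp{p^1}$ (and symmetrically). The only cosmetic differences are that the paper invokes Corollary~\ref{cor:emptyintesection} for the disjoint-support case while you verify it directly, and the paper phrases the equality $\Supp{p^1}=\Supp{p^2}$ as a contradiction argument whereas you prove the two inclusions directly; your version is arguably slightly cleaner.
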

\begin{proof}
Suppose first that $\Supp{p^1}\cap\Supp{p^2} = \varnothing$. The first
assertion follows because the condition \eqref{enu:C1} of
Corollary~\ref{cor:emptyintesection} is satisfied.  Now assume that
$\Supp{p^1}\cap\Supp{p^2} \neq \varnothing$. Then, if $\Supp{p^1} =
\Supp{p^2}$ and $p_v^i = 1/u(p)$ for all $v \in \Supp{p^i}$, clearly
it follows that $p_v^i = \pi_v^i(p)$ for all $v \in V$ and $i \in
[2]$, that is $p \in \Fix(\pi)$. Suppose, now by contradiction, $p \in
\Fix(\pi)$, $\Supp{p^1} \cap \Supp{p^2} \neq \varnothing$ and
$\Supp{p^1} \neq \Supp{p^2}$. Then, there are $v, w \in V$, $i \in
[2]$ and $j = 3-i$, such that $v, w \in \Supp{p^i}$, $v \in
\Supp{p^j}$, and $w \notin \Supp{p^j}$. Since $v, w \in \Supp{p^i}$ it
follows from \eqref{eqn:THE_system} that $p_v^j = p_w^j$.  This leads
however to the following contradiction: $p_v^j > 0$ and $p_v^j = 0$,
where the last equality holds because $p_v^j = p_w^j$ and $p_w^j = 0$
as $w \notin \Supp{p^j}$. Now, assuming that $p \in \Fix(\pi)$ and
$\Supp{p^1} = \Supp{p^2}$, it follows by \eqref{eqn:THE_system} that
$p_v^i = p_w^i$ for all $v,w \in \Supp{p^i}$. This is only possible if
$p^i$ is the uniform probability measure over $\Supp{p^i}$, that is,
$p_v^i = 1/u(p)$ for all $v \in \Supp{p^i}$.
\end{proof}

\begin{proof}[Proof of Theorem~\ref{th:complete_G}] 
The set of fixed points of $\pi$, $\Fix(\pi)$, can be described as an
union indexed by the subsets $U\subset V$, that is
\[
  \Fix(\pi) = \bigcup_{U \subset V} \Fix_U(\pi),
  \quad\text{where}\quad
  \Fix_U(\pi) = \Big\{p \in \Fix(\pi) \,\Big|\, \Supp{p^1} \cap
  \Supp{p^2} = U\Big\}.
\]

If $U \neq \varnothing$ it follows by Lemma \ref{lem:samesupport} that
$\Fix_U(\pi)$ corresponds to the singleton set $\{p = (p^1, p^2)\}$ in
which for each $i = 1, 2$, $p_v^i = 1/u(p)$ when $v \in U$, and $p_v^i
= 0$ for $v \notin U$, that is, $p$ is the point in which $p^i$ is
the uniform probability measure over $U$, $i=1,2$.

We will show next that the limit set $\limset$ cannot be
a subset of $\Fix_U(\pi)$ when $U\neq \varnothing$.  Suppose by
contradiction that $\Prb (\limset \subset \Fix_U(\pi)) >
0$ for some $U \neq \varnothing$. Assume first that $U \neq V$, that
is, there is at least one $k \in V$ with $k \notin U$. Then, according
to Lemma~\ref{lem:samesupport}, there is a unique element $p$ of 
$\Fix_U(\pi)$ such that for $i=1,2$, $p_v^i = 1/u(p)$ for $v \in U$
and $p_v^i = p_k^i = 0$ for $v \notin U$. As a consequence
\begin{align*}
  \lim_{x \to p} \frac{\pi_k^1(x)}{x_k^1}
  &=
    \lim_{x \to p} \frac{1}{x_k^1} \frac{x_k^1(\eta -
    x_k^2)^\alpha}{\sum_{v \in V} x_v^1(\eta -
      x_v^2)^\alpha}             \\ 
  &=
    \frac{\eta^\alpha}{\sum_{v \in U} p_v^1 (\eta -
      1/u(p))^\alpha} =  
    \bigg(\frac{\eta}{\eta - 1/u(p)}\bigg)^\alpha > 1,
\end{align*}
where $\eta = \eta_v^i$ for all $v \in V$, $i \in \{1, 2\}$. By
Theorem~\ref{th:no-toface} it follows that $\Prb\big(\lim_{n \to
\infty} X(n) = p \big) = 0$. Because $p$ is the unique element of
$\Fix_U(\pi)$, we have that $\Prb (\limset \subset \Fix_U(\pi)) = 0$.
 
Assume now that $U = V$. In this case $\Fix_U(\pi) = \{p\}$, where $p$
is the point with coordinates equal to $1/\kappa$ with $\kappa =
|V|$. We will show that $\Prb \big (\lim_{n \to \infty} X(n) = p \big
) = 0$. Straightforward computations show that the Jacobian matrix
$D\pi(p)$ has the form
\[
  \frac{1}{\kappa}\begin{bmatrix}
     \kappa I_\kappa -\mathbf{1}_{\kappa}   &
     \big(\mathbf{1}_{\kappa} - \kappa  
   I_{\kappa}\big)\displaystyle\frac{\alpha}{2\kappa-1}\\[1em]  
     \big(\mathbf{1}_{\kappa} - \kappa
     I_{\kappa}\big)\displaystyle\frac{\alpha}{2\kappa-1}  
                     & \kappa I_\kappa -\mathbf{1}_{\kappa}
   \end{bmatrix},
\] 
where $\mathbf{1}_{\kappa}$ is a $\kappa$-by-$\kappa$ matrix of ones
and $I_\kappa$ is the $\kappa$-by-$\kappa$ identity matrix. For all
$\kappa\geq 2$ and $\alpha > 0$, the eigenvalues of this matrix have
real parts different from 1 and, up to algebraic multiplicity,
one eigenvalue equals
\begin{equation}\label{eqn:center_eigenval}
  \frac{\alpha}{2\kappa-1} + 1.
\end{equation}
This shows that $p$ is hyperbolic and linearly unstable for all
$\kappa\geq 2$ and $\alpha > 0$. Since $p$ belongs to the relative
interior of $\D$, the direct application of
Theorem~\ref{th:no-tointerior} shows that $X$ has zero probability of
converging to $p$.

To conclude the proof, it only remains to consider the case $U =
\varnothing$. From Lemma \ref{lem:samesupport} it follows that
$\Fix_\varnothing(\pi) = \eK$ where $\eK$ is defined in \eqref{eqn:K}.
The set $\Fix_\varnothing(\pi)$ is disconnected from the sets
$\Fix_U(\pi)$, $U\neq \varnothing$. Since $\Prb\big(\limset\subset
\Fix_U(\pi)\big) = 0$ for $U\neq \varnothing$, it then follows from
Theorem~\ref{th:det}.$(iii)$ and $(iv)$ that $\Prb\big(\limset
\subset \Fix_\varnothing(\pi)\big) = 1$.
\end{proof}

\begin{proof}[Proof of
  Corollary~\ref{cor:overlap_complete_G_eps_zero}]
This is an immediate consequence of the fact that $X$ accumulates
almost surely along $\eK$ in \eqref{eqn:K} and the definition of
$\ducal H$ in \eqref{eqn:H}.
\end{proof} 

The next two lemmas will be used in the proof of
Theorem~\ref{th:complete_G_epsilon}.

\begin{lemma}\label{lem:uniform_epsilon}
Suppose that \eqref{eqn:rhoij} and \eqref{eqn:etaiv} hold with
$\epsilon > 0$.  Let $p=(p_1, p_2) \in \Fix(\pi)$. If $\epsilon$ is
sufficiently small, then
\begin{enumerate}[$(i)$, nosep]
  \item  $p_v^i= p_w^i$ for all $v, w \in U$, $i=1,2$;
  \item  $p_v^i= p_w^i$ for all $v, w \in U^i$, $i=1,2$.
\end{enumerate}
\end{lemma}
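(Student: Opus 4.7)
The plan is to read off both assertions directly from the linearised fixed-point system \eqref{eqn:THE_system} under the particular choice of parameters \eqref{eqn:rhoij}--\eqref{eqn:etaiv}. Since both walks live on $K_\kappa$ we have $I_v=\{1,2\}$ for every $v\in V$; writing $\eta$ for the common value of $\eta_v^i$ and plugging in $\rpar_v^{ii}=-\epsilon$, $\rpar_v^{ij}=-1$ for $i\neq j$, the first set of equations of \eqref{eqn:THE_system}, applied to $v,w\in \Supp{p^i}$, becomes
\[
 \epsilon\, p_v^i + p_v^j \;=\; \epsilon\, p_w^i + p_w^j,
 \qquad j=3-i.
\]
This is the only structural input I need.

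For assertion $(ii)$, fix $i\in\{1,2\}$ and let $v,w\in U^i=\Supp{p^i}\setminus U$. By definition of $U^i$, the vertices $v$ and $w$ lie outside $\Supp{p^j}$, hence $p_v^j=p_w^j=0$. The displayed identity immediately reduces to $\epsilon\, p_v^i = \epsilon\, p_w^i$, and since $\epsilon>0$ this yields $p_v^i=p_w^i$. Note that assertion $(ii)$ is therefore valid for all $\epsilon>0$; the smallness hypothesis plays no role here.

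For assertion $(i)$, let $v,w\in U$. Writing the displayed identity once with $i=1$ and once with $i=2$, and setting $a=p_v^1-p_w^1$, $b=p_v^2-p_w^2$, one obtains the $2\times 2$ homogeneous linear system
\[
 \epsilon\, a + b = 0, \qquad a + \epsilon\, b = 0,
\]
whose coefficient matrix has determinant $\epsilon^2-1$. For $\epsilon$ sufficiently small (concretely $\epsilon<1$) this determinant is nonzero, forcing $a=b=0$; that is, $p_v^i=p_w^i$ for $i=1,2$ and any $v,w\in U$. This completes both assertions.

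There is no serious obstacle: the proof is entirely algebraic and the role of the smallness assumption is precisely to make the $2\times 2$ matrix above invertible (it fails exactly at $\epsilon=1$, where $p^1$ and $p^2$ would be free to differ on $U$ by equal-and-opposite amounts). In particular, the threshold $\epsilon<1$ could be stated explicitly, but keeping the looser formulation ``sufficiently small'' is harmless for later use.
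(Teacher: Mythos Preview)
Your proof is correct and follows essentially the same approach as the paper's. Both arguments read the two assertions directly off the linearised fixed-point system \eqref{eqn:THE_system}: for $v,w\in U^i$ the equations collapse to $\epsilon p_v^i=\epsilon p_w^i$, and for $v,w\in U$ the pair of equations for $i=1,2$ combine (in the paper via $(\epsilon^2-1)p_v^1=(\epsilon^2-1)p_w^1$, in your write-up via the equivalent $2\times 2$ homogeneous system with determinant $\epsilon^2-1$) to force equality once $\epsilon<1$.
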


\begin{proof}
To prove the first assertion, suppose $v, w \in U$ then $v, w \in
\Supp{p^1} \cap \Supp{p^2}$. Equation \eqref{eqn:THE_system} in this
case gives $\epsilon p_v^1 + p_v^2 = \epsilon p_w^1 + p_w^2$ and
$\epsilon p_v^2 + p_v^1 = \epsilon p_w^2 + p_w^1$, and so $(\epsilon^2
- 1) p_w^1 = (\epsilon^2 - 1) p_v^1$. For $\epsilon < 1$, this gives
$p_v^1 = p_w^1$. Similarly one can deduce that $p_v^2 = p_w^2$.  To
prove the second assertion, suppose that $v, w \in U^1$. In this case
$v, w \in \Supp{p^1}$ and $v, w \notin \Supp{p^2}$. Equation
\eqref{eqn:THE_system} then gives $\epsilon p_v^1 = \epsilon
p_w^1$. Since $\epsilon > 0$ we have that $p_v^1 = p_w^1$. Arguing in
the same fashion also shows that $p_v^2 = p_w^2$.
\end{proof}

\begin{lemma}\label{lem:alleq}
Suppose that \eqref{eqn:rhoij} and \eqref{eqn:etaiv} hold with
$\epsilon > 0$.  Set $\eK_2^c = \big \{ \, p
\in \eK^c | \ u^1 = u^2 = 0 \big \}$ and define
\[
  \tilde{\eK}_2^c = \bigg\{ \, p \in \eK_2^c\ 
  \Big| \
  p_v^i = \frac{1}{s^i} \,\, \text{ for } \,\, v \in \Supp{p^i}\, 
  \text{ and } \, i=1,2 \bigg\},
\]
that is, $p \in \tilde{\eK}_2^c$, if and only if $p^1 = p^2$, where
$p^2$ is uniformly distributed over its support. In particular,
$\tilde{\eK}_2^c$ includes $p$, with $p_v^1 = p_v^2 = 1/d$ for all $v
\in V$.  Then, for sufficiently small $\epsilon > 0$, it follows that
\[
  \Fix(\pi) = \tilde{\eK} \cup \tilde{\eK}_1^c \cup
  \tilde{\eK}_2^c
\]
where $\tilde{\eK}$ and $\tilde{\eK}_1^c$ are defined as
in Theorem~\ref{th:complete_G_epsilon}.
\end{lemma}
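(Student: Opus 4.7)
The plan is to use Lemma~\ref{lem:uniform_epsilon} to reduce any $p \in \Fix(\pi)$ to a small number of scalar parameters, and then split the analysis into four disjoint support configurations indexed by the triple $(u, u^1, u^2)$ defined in \eqref{eqn:sizes}. Concretely, for $\epsilon>0$ sufficiently small, Lemma~\ref{lem:uniform_epsilon} guarantees that $p_v^i$ takes a single value $\delta^i$ on the common part $U$ of the supports and a single value $\bar\delta^i$ on the walk-$i$-exclusive part $U^i$. The question then becomes: for which $(u, u^1, u^2)$, and with which $(\delta^1, \delta^2, \bar\delta^1, \bar\delta^2)$, can \eqref{eqn:THE_system} be satisfied while keeping all relevant coordinates strictly positive? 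The four exhaustive regimes are \emph{(a)} $u = 0$ (so $p \in \eK$), \emph{(b)} $u, u^1, u^2 > 0$ (so $p \in \eK_1^c$), \emph{(c)} $u > 0$ with $u^1 = u^2 = 0$ (so $p \in \eK_2^c$), and \emph{(d)} the asymmetric regime $u > 0$ with exactly one of $u^1, u^2$ equal to zero.

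In regimes \emph{(a)} and \emph{(c)} I would verify directly that the fixed-point equations force uniform values on each nontrivial block, yielding $p_v^i = 1/s^i$ on $\Supp{p^i}$ in \emph{(a)} (so $p \in \tilde{\eK}$) and $p_v^i = 1/u$ on $U$ in \emph{(c)} (so $p \in \tilde{\eK}_2^c$). In \emph{(a)} the key point is that $p_v^j$ vanishes on $\Supp{p^i}$, so the first line of \eqref{eqn:THE_system} collapses to $-\epsilon p_v^i = -\epsilon p_w^i$, and $\epsilon>0$ does the rest. In regime \emph{(b)}, comparing \eqref{eqn:THE_system} at $v\in U$ versus $w \in U^i$ for walk $i$ gives $\delta^j = \epsilon(\bar\delta^i - \delta^i)$; using the normalisation $u\delta^i + u^i \bar\delta^i = 1$ to eliminate $\bar\delta^i$ yields the $2\times 2$ linear system
\begin{equation*}
  \epsilon s^i \delta^i + u^i \delta^j = \epsilon, \qquad \{i,j\} = \{1,2\},
\end{equation*}
whose determinant $\epsilon^2 s^1 s^2 - u^1 u^2$ is close to $-u^1 u^2 \neq 0$ for small $\epsilon$. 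Cramer's rule delivers precisely the formulas in \eqref{eqn:deltaEbardelta}, and a leading-order expansion shows $\delta^i \sim \epsilon/u^j > 0$ and $\bar\delta^i \to 1/u^i > 0$, so this indeed produces a genuine point of $\tilde{\eK}_1^c$.

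The main obstacle I anticipate is ruling out the asymmetric regime \emph{(d)}, since it is not covered by any of the three target sets. Taking, without loss of generality, $u^2 = 0 < u^1$, walk-2 normalisation forces $\delta^2 = 1/u$; the same $v \in U$ versus $w \in U^1$ comparison for walk~$1$ together with $u\delta^1 + u^1 \bar\delta^1 = 1$ then yields
\begin{equation*}
  \delta^1 = \frac{\epsilon u - u^1}{\epsilon u s^1},
\end{equation*}
which is strictly negative as soon as $\epsilon < u^1/u$. This contradicts the requirement $\delta^1 > 0$ implicit in $U \subset \Supp{p^1}$, so for small $\epsilon > 0$ no fixed point exists in \emph{(d)}. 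Assembling the four case analyses gives the announced decomposition $\Fix(\pi) = \tilde{\eK} \cup \tilde{\eK}_1^c \cup \tilde{\eK}_2^c$; the reverse inclusion is an immediate substitution check using the explicit values produced in \emph{(a)}, \emph{(b)}, \emph{(c)}.
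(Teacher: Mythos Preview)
Your proposal is correct and follows essentially the same route as the paper: both partition $\D$ into the four support regimes $(u=0)$, $(u,u^1,u^2>0)$, $(u>0,u^1=u^2=0)$, and the asymmetric case, invoke Lemma~\ref{lem:uniform_epsilon} to reduce to the scalars $\delta^i,\bar\delta^i$, solve the resulting linear relations in each regime, and rule out the asymmetric case by showing the equations force a violation for small $\epsilon$. The only cosmetic differences are that the paper cites Corollary~\ref{cor:emptyintesection}\,\eqref{enu:C2} for regime~(a) rather than rederiving $-\epsilon p_v^i=-\epsilon p_w^i$ directly, and in regime~(d) the paper stops at the impossible identity $\epsilon p_v^i + 1/s^j = \epsilon p_w^i$ instead of solving for $\delta^1$ explicitly; your negative-$\delta^1$ computation is an equivalent way to exhibit the same obstruction.
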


\begin{proof}
Let $\eK_1^c$ be defined as in Theorem~\ref{th:complete_G_epsilon} and
set $\eK_3^c = \big \{ \, p \in \eK^c | \ u^i > 0 \, \text{ and } \,
u^{3-i} = 0 \, \text{ for some } i =1,2 \big \}$.  Note that $\eK^c =
\eK_1^c \cup \eK_2^c \cup \eK_3^c$. It suffices to show that, for
sufficiently small $\epsilon > 0$, the following assertions hold: $a)$
$\Fix(\pi) \cap \eK = \tilde{\eK}$, $b)$ $\Fix(\pi) \cap \eK_1^c =
\tilde{\eK}_1^c$, $c)$ $\Fix(\pi) \cap \eK_2^c = \tilde{\eK}_2^c$, and
$d)$ $\Fix(\pi) \cap \eK_3^c = \emptyset$.

Proof of assertion $a)$: $\Fix(\pi) \cap \eK = \tilde{\eK}$. Since
$\tilde{\eK} \subset \eK$, it is sufficient to show that, if $p \in
\eK$, then $p \in \Fix(\pi)$ if and only if $p \in
\tilde{\eK}$. Assuming $p \in \eK$, that is, $\Supp{p^1} \cap
\Supp{p^2} = \emptyset$, and applying Corollary
\ref{cor:emptyintesection} \eqref{enu:C2} under the assumption that
$\rpar^{ii}_v = -\epsilon$, it follows that $p \in \Fix(\pi)$ if and
only if $p^i_v = 1/s^i$ for all $v \in \Supp{p^i}$.

Proof of assertion $b)$ $\Fix(\pi) \cap \eK_1^c = \tilde{\eK}_1^c$:
Suppose first that $p \in \Fix(\pi) \cap \eK_1^c$. In this case, from
Lemma~\ref{lem:uniform_epsilon}, there are numbers $0\leq \delta^i,
\tilde\delta^i \leq 1$, $i=1,2$, 
that $p_v^i = \delta^i$ if $v \in U$ and $p_w^i = \tilde\delta^i$ if
$w \in U^i$.  Since $\Supp{p^i} = U \cup U^i$ and $U \cap U^i =
\emptyset$, it follows that $\sum_{v \in U} p_v^i + \sum_{w \in U^i}
p_w^i = 1$, that is, $u \delta^i + u^i \bar \delta^i =1$. Let $v \in
U$ and $w \in U^i$. Since $U^i \cap \Supp{p^j} = \emptyset$, it
follows from \eqref{eqn:THE_system} that $\epsilon p_v^i + p_v^j =
\epsilon p_w^i$, that is, $\epsilon \delta^i + \delta^j = \epsilon
\bar \delta^i$ for $i = 1,2$ and $j = 3 -i$. Summing up, $p\in
\tilde{\eK}_1^c$ defined by some $\delta^i$ and $\bar\delta^i$
satisfies \eqref{eqn:THE_system} if, and only if $\delta^i$ and $\bar
\delta^i$ satisfy the system: $u \delta^i + u^i \bar \delta^i =1$ and
$-\epsilon \delta^i - \delta^j = -\epsilon \bar \delta^i$, $i = 1,2$
and $j = 3 - i$. Using the fact that $s^i = u^i + u$, , $i = 1,2$, it
is straightforward to see that this system has a unique solution in
the variables $\delta^i$ and $\bar\delta^i$, $i = 1,2$, given by
\eqref{eqn:deltaEbardelta} and thus that $p \in
\tilde\eK_1^c$. Suppose now that $p \in \tilde\eK_1^c$. Then clearly
$p \in \eK_1^c$ and, as shown previously, $p$ is a solution of
\eqref{eqn:THE_system}, thus $p \in \Fix(\pi) \cap \eK_1^c$.

Poof of assertion $c)$: $\Fix(\pi) \cap \eK_2^c =
\tilde{\eK}_2^c$. Since $\tilde{\eK}_2^c \subset \eK_2^c$, it is
sufficient to show that, if $p \in \eK_2^c$, then $p \in \Fix(\pi)$ if
and only if $p \in \tilde{\eK}_2^c$. Assume $p \in \eK_2^c$, and so
that $\Supp{p^1} = \Supp{p^2} = U$. If $p \in \Fix(\pi)$, then by
Lemma~\ref{lem:uniform_epsilon}.($i$), $p \in \tilde{\eK}_2^c$. Now
suppose $p \in \tilde{\eK}_2^c$. Clearly $p \in \eK_2^c$ and, since
$p$ satisfies \eqref{eqn:THE_system}, we have that $p \in \Fix(\pi)
\cap \eK_2^c$.

Proof of assertion $d)$: $\Fix(\pi) \cap \eK_3^c =\emptyset$. The
proof proceeds by contradiction, assuming that, for any $\varepsilon >
0$, there is a $p \in \Fix(\pi) \cap \eK_3^c$. For such a $p$ we have
that $\Supp{p^1} \neq \Supp{p^2}$ and $\Supp{p^j} \subset \Supp{p^i}$
for some $i = 1, 2$ and $j = 3 -i$.  Since $\Supp{p^j} \subset
\Supp{p^i}$, we have that $\Supp{p^j} = U$.  Then, by
Lemma~\ref{lem:uniform_epsilon}.(\emph{ii}), we have that $p_v^j =
1/s^j$ for $v \in \Supp{p^j}$. Let $v \in U$ and $w \in U^i$, then $v
\in \Supp{p^i} \cap \Supp{p^j}$, $w \in \Supp{p^i}$, and $w \notin
\Supp{p^j}$. Then, from \eqref{eqn:THE_system}, we have that $\epsilon
p_v^i + p_v^j = \epsilon p_w^i$, that is, $\epsilon p_v^i + 1/s^j =
\epsilon p_w^i$. Since $1/s^j > 0$, the previous equality is
impossible for sufficiently small $\epsilon > 0$.
\end{proof}

\begin{proof}[Proof of Theorem~\ref{th:complete_G_epsilon}]
As shown by Lemma~\ref{lem:alleq} the set of fixed points of $\pi$
equals $\Fix(\pi) = \tilde{\eK}\cup \tilde{\eK}_1^c\cup
\tilde{\eK}_2^c$. Being a finite union of finite sets, this shows that
$\Fix(\pi)$ is finite. The application of Theorem~\ref{th:det}.($iv$)
then shows that $X$ converges almost surely to an isolated point $p\in
\Fix(\pi)$. To conclude the proof, we will show that
$\Prb\big(\lim_{n\to\infty} X(n) = p\big)=0$ if $p\in
\tilde{\eK}_2^c$. We will consider separately the cases $\Supp{p^1} =
\Supp{p^2} \neq V$ and $\Supp{p^1} = \Supp{p^2} = V$.

Suppose that $p \in \tilde{\eK}_2^c$, and assume first that
$\Supp{p^1} = \Supp{p^2} \neq V$. In this case, there exists at least
one vertex $k\in V$ such that $p^i_k = 0$ for $i=1,2$. From
Lemma~\ref{lem:alleq}, for all $v \in \Supp{p^i}$, $p^i_v = 1/s$ where
$s = |\Supp{p^i}|$.  Using \eqref{eqn:pi} for any $i = 1, 2$ and
$j=3-i$ leads to
\begin{align*}
  \lim_{x \to p} \frac{\pi_k^i(x)}{x_k^i}
  &=
  \lim_{x \to p} \frac{1}{x_k^i}
  \frac{x_k^i(\eta-\epsilon x_k^i -x_k^j)^\alpha}{\sum_{w \in
      \Supp{p^i}} x_w^i(\eta - \epsilon x_w^i -x_w^j)^\alpha}  \\
  &=
    \frac{\eta^\alpha}{\sum_{w \in \Supp{p^i}}
    1/s(\eta - \epsilon/s - 1/s)^\alpha}
  =
  \bigg(\frac{\eta}{\eta - (\epsilon+1)/s}\bigg)^\alpha > 1,
\end{align*}
where $\eta = \eta_v^i$ for all $v \in V$, $i = 1, 2$.
Theorem~\ref{th:no-toface} can now be used to show that
$X$ cannot converge to any point $p \in \tilde{\eK}_2^c$ with
$\Supp{p^1} = \Supp{p^2} \neq V$.

Suppose that $p\in \tilde{\eK}_2^c$ is such that $\Supp{p^1} =
\Supp{p^2} = V$. In this case, from Lemma~\ref{lem:alleq}, all the
coordinates of $p$ are equal to $1/\kappa$, $\kappa = |V|$. By
observing that $\sigma\big(D\pi(p)\big)$, the spectrum of $D\pi(p)$,
depends continuously on $\epsilon$ and using the form of $D\pi(p)$ in
the proof of Theorem~\ref{th:complete_G}, choose $\epsilon > 0$
sufficiently small so that $\sigma\big(D\pi(p)\big)$ has an eigenvalue
arbitrarily close to \eqref{eqn:center_eigenval}. The point $p$ is thus
linearly unstable for all $\kappa\geq 2$ and $\alpha>0$ when
$\epsilon$ is sufficiently small. Since $p \in \iD$, from
Theorem~\ref{th:no-tointerior} it follows that $\Prb\big(\lim_{n \to
\infty} X(n) = p \big) = 0$.
\end{proof} 

\begin{proof}[Proof of Corollary~\ref{cor:overlap_complete_G_eps_pos}]
As the sets $\tilde{\eK}$ and $\tilde{\eK}_1^c$ in
\eqref{eqn:unifabitra} and \eqref{eqn:overlappted} are finite, from
Theorems~\ref{th:det} and \ref{th:complete_G_epsilon} it follows that
$\sum_{p \in \tilde{\eK}\cup \tilde{\eK}_1^c} \Prb\big(\lim_n
\ducal{H}(X(n)) = \ducal{H}(p)\big) = 1$. This concludes the proof of
the first assertion. To prove the second item, it is sufficient to
show that for all $p \in \tilde{\eK}\cup \tilde{\eK}_1^c$,
$\ducal{H}(p) < \kappa^3 \epsilon^2$ for sufficiently small $\epsilon
> 0$. This inequality is immediate when $p \in \tilde{\eK}$ because
for any such point,  \eqref{eqn:unifabitra} implies that 
$\ducal{H}(p) = 0$. It remains to analyse the case $p \in
\tilde{\eK}_1^c$.

Suppose $p \in \tilde{\eK}_1^c$. We start by considering an upper
bound for $\delta^i(p) = \delta^i$ defined in
\eqref{eqn:deltaEbardelta}. Let $\epsilon > 0$ be small enough such
that
\begin{equation}\label{eqn:positivity}
  u^i - \epsilon s^j > 0
  \quad\text{and}\quad
  u^i u^j - \epsilon^2 s^i s^j >  0.
\end{equation}
Such $\epsilon$ ensures that $\delta^i(p) > 0$. To get an upper
bound for $\delta^i(p)$ consider $u^i \leq \kappa-1$, $s^j > 1$,
$u^i u^j \geq 1$ and $s^i s^j \leq \kappa^2$. In
this case, for  $\epsilon > 0$,
\begin{equation}\label{eqn:deltaless}
  \delta^i(p)/\epsilon = \frac{u^i - \epsilon s^j}{u^i u^j -
    \epsilon^2 s^i s^j} < \frac{(\kappa-1) - 
    \epsilon}{1-\epsilon^2 \kappa^2}.
\end{equation}
For a sufficiently small $\epsilon > 0$, \eqref{eqn:deltaless} implies 
that $\delta^i(p)/\epsilon < \kappa$. As a consequence, for such an
$\epsilon$, we have that $\delta^i(p) < \kappa\epsilon$, and for $p
\in \tilde{\eK}_1^c$, the definition of $\ducal{H}$ in \eqref{eqn:H}
gives
\[
  \ducal{H}(p) = \sum_{v \in U(p)} p^i_v p^j_v = \sum_{v \in U(p)}
  \delta^i(p)\delta^j(p) < 
  \big|U(p)\big| 
  (\kappa\epsilon)^2 \leq \kappa(\kappa\epsilon)^2
\]
and the proof is concluded.
\end{proof}

\section{Proofs of the results for star graphs}\label{sec:star_proofs}
\begin{proof}[Proof of Theorem~\ref{th:star_G}]
Throughout let $\eta = \eta_v^i$ for all $v \in V$, $i \in
I_v$. Suppose first that $\epsilon = 0$.  We start by showing that the
set of fixed points of $\pi$ can be described as $\Fix(\pi)=\eS_1 \cup
\tilde{\eS}_1$, where $\eS_1$ defined as in the statement of the
theorem and
\[
  \tilde{\eS}_1 = \Big\{
  p \in \D \ \Big|\ p_1^i = 1\ \text{ for at least two } i \in [m]\
  \text{ while }\ p_1^j = 0\ \text{ for all } \,   p_1^j < 1 \Big\}.
%
\]
To show that any point $p\in \tilde{\eS}_1$ belongs to $\Fix(\pi)$, it
suffices to observe that $|\Supp{p^i}| = 1$ for all $i \in [m]$,
because in this case \eqref{eqn:THE_system} is tautologically true.
Now, we will show that any point $p \in \eS_1$ belongs to
$\Fix(\pi)$. Assume first that $p_1^i \in \{0, 1\}$ for all $i \in
[m]$. In this case $p$ belongs to $\Fix(\pi)$ because $|\Supp{p^i}| =
1$ for all $i \in [m]$. Assume now that $p_1^i = a$, for $a \in (0,
1)$ for some $i \in [m]$. The system \eqref{eqn:THE_system} gives
$\eta_1^i - \epsilon a = \eta_{i+1}^i -\epsilon(1-a)$. By hypothesis
we have that $\eta_1^i = \eta_{i+1}^i$ and
$\epsilon = 0$, and therefore $p \in \Fix(\pi)$ for all $a \in (0,1)$. To
conclude that $\Fix(\pi)=\eS_1\cup\tilde{\eS}_1$, suppose that $p
\notin \eS_1\cup\tilde{\eS}_1$. In this case, there are at least two
walks $i$, $k$ such that $p_1^i \in (0,1)$ and $a = p_1^k \in
(0,1)$. But in this case, for $v = 1$, $w = i+1$, $\rpar_v^{ii} = 0$
and $\rpar_v^{ij} = -1$ for $i \neq j$, \eqref{eqn:THE_system} gives
$\eta - a = \eta_v^i + \sum_{j \in I_v} \rpar_v^{ij} p_v^j = \eta_w^i
+ \sum_{j \in I_w} \rpar_w^{ij} p_w^j = \eta$, which contradicts $a
\in (0,1)$.

Assume that $p \in \tilde\eS_1$. For a suitable choice of
$i \in [m]$, we have that $p_1^i = 1$ and $\sum_{j\neq i}
p_1^j \geq 1$. As a consequence
\[
  \lim_{x \to p} \frac{\pi^i_{i+1}(x)}{x_{i+1}^i}
  =
  \frac{\eta^\alpha}{\big(\eta - \sum_{j\neq i}
  p_1^j\big)^\alpha}
  \geq
  \Big(\frac{\eta}{\eta - 1}\Big)^\alpha
  >
  1,
\]
A direct application of Theorem~\ref{th:no-toface} shows that $X$ has
zero probability of converging to any point $p \in \tilde{\eS}_1$. 

By Theorem~\ref{th:det}.$(iv)$, $\limset \subset \Fix(\pi)$ almost
surely. Now, as $\limset$ is connected and $\eS_1$ and $\tilde\eS_1$
are disconnected it holds that either $\limset \subset \eS_1$ or
$\limset \subset \tilde\eS_1$. The last inclusion can be discarded by
the fact that $\tilde\eS_1$ is finite and $X$ does not converges to
any of its points as shown in the previous paragraph. This concludes
the proof of the first assertion made by the theorem concerning the
case $\epsilon = 0$.

Let $\epsilon > 0$. By computing the solutions of
\eqref{eqn:THE_system} for each \( S \in \V \), it is readily verified
that the poits $p$ of \( \Fix(\pi) \) are of the following types:
\begin{enumerate}[label=$(b_{\arabic*})$, ref=$b_{\arabic*}$, nosep, 
  leftmargin=1cm]
\item\label{enu:b1} $p_1^i = 0$ for all $i \in [m]$.
\item\label{enu:b2} $p_1^i = 1$ for at least one $i \in [m]$, while
  $p_1^j \in \{0,1\}$ for all $j \neq i$. 
\item\label{enu:b3} $p_1^i = \epsilon/(|K| + 2 \epsilon - 1)$ for $i
  \in K$, while $p_1^j = 0$ for $j \notin K$. 
\item\label{enu:b4} $p_1^i = \epsilon/(m + 2 \epsilon - 1)$ for all $i 
  \in [m]$. 
\end{enumerate}

Assuming that $\epsilon > 0$ and applying Theorem \ref{th:no-toface}
shows that $X$ does not converges to any point of types \eqref{enu:b1}
and \eqref{enu:b2} because in both cases \eqref{eqn:abar} is satisfied
for suitable choices of $i \in [m]$ and $v \in V^i$.  Indeed, if $p$
is of type \eqref{enu:b1}, then $p^i_1 = 0$ and  $p^i_{i + 1} = 1$, and so 
\[
  \lim_{x \to p} \frac{\pi^i_1(x)}{x^i_1} =\frac{\big(\eta - \epsilon
    p_1^i - \sum_{j \neq i} p_1^j\big)^\alpha}{p^i_1 \big(\eta -
    \epsilon p_1^i - \sum_{j \neq i} p_1^j\big)^\alpha +  p^i_{i+1}
    \big(\eta - \epsilon p_{i+1}^i\big)^\alpha} =
  \bigg(\frac{\eta}{\eta - \epsilon}\bigg)^\alpha > 1. 
\]
If $p$ is of type \eqref{enu:b2}, then, for some $i \in [m]$,  we have
$p^i_1 = 1$ and  $p^i_{i + 1} = 0$, and so 
\begin{align*}
  \lim_{x \to p} \frac{\pi^i_{i+1}(x)}{x^i_{i+1}} 
  &=  
  \frac{\big(\eta - \epsilon p_{i+1}^i\big)^\alpha}{p^i_1 \big(\eta - 
  \epsilon p_1^i - \sum_{j \neq i} p_1^j\big)^\alpha +  p^i_{i+1}
  \big(\eta - \epsilon p_{i+1}^i\big)^\alpha} \\
  &=
  \bigg(\frac{\eta}{\eta
  - \epsilon - \sum_{j \neq i} p_1^j}\bigg)^\alpha  > 1.
\end{align*}

Assume now that $\epsilon \in \big(0, \frac12\big)$. The convergence
of $X$ towards the point $p$ of the form \eqref{enu:b4} can be ruled
out as follows. Relatively simple computations show that all the
eigenvalues of $D\pi(p)$ have real parts different from one. Moreover,
one of the eigenvalues of $D\pi(p)$ equals
\[
  \lambda(\epsilon) = \alpha \frac{\epsilon\big(m-1 +
    \epsilon\big)\big(2\epsilon -1\big)}{ 
  \big(m-1+2\epsilon\big)\big(-m(m-1) - (m+1)\epsilon +
  \epsilon^2\big)} +  1.
\]
It thus follows that $\lambda(\epsilon) > 1$ if $\epsilon \in \big(0,
\frac12\big)$, showing that $p$ is a linearly unstable point. By
observing that $p \in \iD$ and applying
Theorem~\ref{th:no-tointerior}, it follows that $X$ has zero
probability of converging to $p$. Only the set of points
\eqref{enu:b3} remain. These points are disconnected from those in
\eqref{enu:b1}, \eqref{enu:b2} and \eqref{enu:b4}. Moreover, observe
that the set of points in \eqref{enu:b3} is discrete and equals the
set $\eS_2$. As these points belong to $\Fix(\pi)$ and $\limset$ is
connected, items $(iii)$ and $(iv)$ of Theorem~\ref{th:det} show that
$X$ must converge a.s. towards any one point of $\eS_2$.

In order to show the last item of the theorem, assume that $\epsilon
\in \big(\frac12, 1\big)$. It suffices to show that $X$ does not
converge towards any point $p$ of the form \eqref{enu:b3}. This
follows from Theorem~\ref{th:no-toface}. Indeed, if $p$ is of type
\eqref{enu:b3}, there are $m-k$ indices $i$ with $p_1^i = 0$, and $k$
indices $j$ with $p_1^j = \epsilon/(k+2\epsilon-1)$ where $k=|K|$. So,
for any $i$ with $p_1^i = 0$, $p_{i+1}^i = 1$ and $\epsilon \in
\big(\frac12, 1\big)$, we obtain
\begin{align*}
  \lim_{x \to p} \frac{\pi^i_1(x)}{x^i_1}  
  &= 
  \frac{\big(
    \eta - \epsilon p^i_1 - \sum_{j \neq i} p^j_1\big)^\alpha
  }{
    p^i_1 \big( \eta - \epsilon p^i_1 - \sum_{j \neq i} p^j_1
    \big)^\alpha + p^i_{i+1} \big( \eta - \epsilon p^i_{i+1}
    \big)^\alpha
  } \\ 
  &= \bigg(\frac{
    \eta -  k \epsilon/(k+2\epsilon-1)
  }{
    \left(\eta - \epsilon \right)  \, \, 
  }\bigg)^\alpha > 1.
\end{align*}
As the point in \eqref{enu:b4} belongs to $\Fix(\pi)$ and is isolated
from the points in \eqref{enu:b1}-\eqref{enu:b3}, the result follows
from items $(iii)$ and $(iv)$ in Theorem~\ref{th:det}.
\end{proof}

\begin{proof}[Proof of Corollary~\ref{cor:star_overlap}]
Suppose first that  $\epsilon \in \big(0, 2/(m(m-1))\big)  \setminus
\{\frac12\}$. Then, by virtue of Theorem~\ref{th:star_G}, the process
\( X(n) \) converges to some point \( p \) such that 
\begin{equation}\label{eqn:partiacenter}
  p_1^i = \frac{\epsilon}{|K| + 2\epsilon - 1} \ \text{ for } i \in K,
  \quad \text{and} \quad p_1^j = 0 \ \text{ for } j \in [m] \setminus K 
\end{equation}
where $K$ is a non-empty subset of $[m]$, including $K = [m]$. When
$|K|=1$ it follows that $\ducal{H}(p)= \sum_{i<j} p^i_1p^j_1 = 0$ for
any $p$ satisfying \eqref{eqn:partiacenter}.

Suppose that $|K|\geq 2$. Since $\lim_n \ducal{H}(X(n)) =
\ducal{H}(p)$, it suffices to show that \(\ducal{H}(p) \leq \epsilon
\) for any point \( p \) satisfying \eqref{eqn:partiacenter}. To see
this, observe that \( p_1^i = 0 \) for all \( i \notin K \). As a
consequence, it follows that
\[
 \ducal{H}(p) = \frac12 \sum_{i,j\in K} p_1^i p_1^j 
 =
  \binom{k}{2} \left( \frac{\epsilon}{k + 2\epsilon -1}
 \right)^2,
\]
where  \( k = |K| \geq 2\). Since \( k  \in \{1, 2, \ldots, m\} \) and
$\epsilon \leq 2/(m(m-1))$, we have
\[
  \ducal{H}(p) =  
  \frac{\epsilon^2 k(k-1)}{2(k+ 2\epsilon-1)^2} 
  \leq
  \frac{\epsilon^2 k(k-1)}{2} 
  \leq
  \frac{\epsilon^2 m(m-1)}{2}
  \leq 
  \epsilon.
\]

It remains to analyse the case $\epsilon = 0$. When $\epsilon = 0$,
any point \( p \in \eS_1 \) satisfies \( \ducal{H}(p) = \sum_{i < j}
p_1^i p_1^j = 0 \). This implies that $\lim_n \ducal{H}(X(n)) =
\ducal{H}(p) = 0$, proving the second assertion and completing the
proof.
\end{proof}

\begin{proof}[Proof of Lemma~\ref{lem:star_with_preferences}] The
  proof proceeds along two main cases: $\eta >1$ and $0 < \eta < 1$.
 
\medskip 

\noindent\textbf{Case 1}: $\eta > 1$. In this case, $p = (p_1^1,
p_2^1, p_1^2, p_2^2) \in \Fix(\pi)$ if and only if $p_v^i \in \{0,
1\}$ for $i, v = 1, 2$.  To see this, suppose by contradiction and
without loss of generality that $p \in \Fix(\pi)$ with $p_v^1 \in
(0,1)$ for some $v \in \{1,2\}$. Since $p_1^1 + p_2^1 = 1$, it follows
that $p_1^1 > 0$ and $p_2^1 > 0$ and, from \eqref{eqn:THE_system} we
conclude that $\tilde \eta = \tilde \eta\ +\ \eta\ -\ p_1^2 \geq
\tilde \eta\ +\ \eta\ -\ 1 > \tilde \eta$. We now identify the limit
points to which $X$ may converge. For $p = (0, 1, a, 1-a)$, $a \in
\{0,1\}$, it follows that
\begin{align*}
  \lim_{x \to p} \frac{\pi_1^1(x)}{x_1^1}
  &=
  \frac{(\eta + \tilde{\eta}  - a)^\alpha}{\tilde{\eta}^\alpha} \geq 
  \Big(\frac{\eta + \tilde{\eta} - 1}{\tilde{\eta}}\Big)^\alpha > 1.
\end{align*}
Using the previous inequality and applying Theorem~\ref{th:no-toface}
shows that $X$ does not converge to points $p$ of these types. A
similar argument also shows that $X$ does not converge to points of
the form $p = (a, 1-a, 0, 1)$, $a \in \{0,1\}$. It then follows from
items $(iii)$ and $(iv)$ of Theorem~\ref{th:det} that $X$ converges
almost surely towards $(1, 0, 1, 0)$, as this point is disconnected
from the points $(0, 1, a, 1-a)$, $(a, 1-a, 0, 1)$, $a \in \{0, 1\}$,
and in fact the only element left in $\Fix(\pi)$.

\medskip

\noindent\textbf{Case 2}: $0 < \eta < 1$. The analysis in this case
proceeds along several subcases exhausting all possibilities. Suppose
first that $p = (1,0,1,0)$. In this case we have
\[
  \lim_{x \to p} \frac{\pi^1_2(x)}{x_2^1} =
  \Big(\frac{\tilde{\eta}}{\tilde{\eta} + \eta -1}\Big)^\alpha > 1,
\]
where the inequality follows because $\eta < 1$. The convergence of
$X$ to $p$ can now be discarded by using
Theorem~\ref{th:no-toface}. Next, assume that $p =
(0,1,0,1)$. Computations show that
\[
  \lim_{x \to p} \frac{\pi^1_1(x)}{x_1^1}
  =
  \Big(\frac{\tilde{\eta} + \eta}{\tilde\eta}\Big)^\alpha > 1.
\]
A direct application of Theorem~\ref{th:no-toface} shows that $X$ does
not converges to $p$.

Let $p = (a, 1-a, b, 1-b)$, $a \in (0,1)$ and $b \in \{0, 1\}$. A
verification of \eqref{eqn:THE_system} implies that $\tilde \eta =
\tilde \eta + \eta - b$, where $b = 0$ or $b = 1$. But this
contradicts the assumption that $0 < \eta < 1$. This shows that $p
\notin \Fix(\pi)$. Arguing in the same way, we conclude that any $p =
(a, 1-a, b, 1-b)$, with $a \in \{0, 1\}$ and $b \in (0,1)$, does not
belong to $\Fix(\pi)$ either.

Now, let $p = (a, 1-a, b, 1-b)$, where $a \in (0,1)$ and $b \in
(0,1)$. A simple verification using \eqref{eqn:THE_system} shows that
$p \in \Fix(\pi)$ iff $a = b = \eta$. The spectrum of the Jacobian
matrix $D\pi$ at $p$ in this case equals
\[
  \frac{-\eta + \eta^2}{\tilde\eta} + 1, \quad
  0,\quad  0,\quad
  \frac{\eta - \eta^2}{\tilde\eta} + 1.
\]
As $(\eta-\eta^2)/\tilde\eta$ is positive for all $0 < \eta < 1$,
and therefore $(\eta-\eta^2)/\tilde\eta + 1 > 1$ for all $0 < \eta
< 1$, we conclude by Theorem~\ref{th:no-tointerior} that $X$ does not
converge to $p$.

The remaining elements in $\Fix(\pi)$ when $0 < \eta < 1$ are the
points $(1,0,0,1)$ and $(0,1,1,0)$. These points are themselves
disconnected and also disconnected from the points of $\Fix(\pi)$
considered previously.  Items ($iii$) and ($iv$) in
Theorem~\ref{th:det} show therefore that $\big\{\lim_n X(n) =
(1,0,0,1)\big\} \bigcup \big\{\lim_n X(n) = (0,1,1,0)\big\}$ occurs
almost surely.
\end{proof}

\section{Proofs of the results for cyclic
  graphs}\label{sec:cycle_proofs}
\begin{lemma}[Fixed points for $\pi$ in cyclic graphs, $\epsilon = 0$]
\label{lem:Equilibria_Cyclic_G} Let $C_m$ be a cyclic graph with $m$
edges and for each $i \in [m]$ let $W^i$ be a random walk defined on
$G^i$ according to the transition probabilities in \eqref{eqn:pi}. 
Then
$
  \Fix(\pi) = \bigcup_{r=1}^4 \eC_r,
$
where $\eC_r$, $r=1,2,3,4$ are defined before the statement of
Theorem~\ref{th:cyclic_G}.
\end{lemma}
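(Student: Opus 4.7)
The plan is to apply Theorem~\ref{th:det}$(i)$, reducing $\Fix(\pi)$ to the union of solution sets $\D(S)$ of the linear system \eqref{eqn:THE_system}, and then classify the solutions by the pattern of mixed versus unmixed edges around the cycle. Throughout, I would use the single-scalar parametrization $p^i = \dbr{a_i, 1-a_i}$ with $a_i = p^i_i \in [0,1]$, so that edge $i$ is unmixed iff $a_i \in \{0,1\}$.

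Substituting $\eta_v^i \equiv \eta$, $\rpar_v^{ii} = 0$ and $\rpar_v^{ij} = -1$ for $i \neq j$, and using that the only index in $I_i \setminus \{i\}$ is $i-1$ and the only one in $I_{i+1} \setminus \{i\}$ is $i+1$, the first set of equations in \eqref{eqn:THE_system} for a mixed edge $i$ (that is, $S^i = V^i$) collapses to a single equation
\[
  1 - a_{i-1} = a_{i+1}, \qquad \text{i.e.,} \qquad a_{i-1} + a_{i+1} = 1.
\]
Unmixed edges contribute no equation beyond the boundary conditions already encoded in the parametrization. The key structural observation is then the following propagation dichotomy: if edge $i$ is mixed, its two neighbors are either both unmixed and oppositely oriented, or both mixed. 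Indeed, if one of $a_{i\pm1}$ lies in $\{0,1\}$ the equation forces the other into $\{0,1\}$ as its complement, while if one lies in $(0,1)$ so does the other. Iterating via the recurrence $a_{i+2} = 1 - a_i$ (read off from the constraint at edge $i+1$), if any two adjacent edges are mixed, then mixedness propagates around the entire cycle.

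This leaves exactly three cases. If no edge is mixed, all $a_i \in \{0,1\}$ are free and we recover $\eC_3$. If every edge is mixed, the recurrence gives $a_{i+4} = a_i$; combined with the cyclic closure $a_{i+m} = a_i$, a short case analysis by $m \bmod 4$ shows that $a_1, a_2 \in (0,1)$ are free when $4 \mid m$, giving precisely the period-$4$ pattern $a, b, 1-a, 1-b$ that defines $\eC_2$, whereas in the remaining residue classes the closure forces $a_i = 1/2$ for every $i$, giving $\eC_1$. If some but not all edges are mixed, the propagation dichotomy forbids adjacent mixed edges, hence every mixed edge has two unmixed neighbors; then $a_{i-1} + a_{i+1} = 1$ with $a_{i\pm1} \in \{0,1\}$ forces exactly one to be $0$ and the other $1$, which is exactly $\eC_4$. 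The converse inclusion $\bigcup_{r=1}^4 \eC_r \subset \Fix(\pi)$ is a direct verification of \eqref{eqn:THE_system} on each $\eC_r$ (vacuously for $\eC_3$, by construction for $\eC_1$ and $\eC_2$, and from the oppositely oriented unmixed flanks for $\eC_4$). The main obstacle I anticipate is the cyclic-closure bookkeeping in the all-mixed case, where one must carefully track how the period-$4$ recurrence interacts with the cycle length $m$ to separate the three residue classes mod $4$ and pin down $\eC_1$ versus $\eC_2$.
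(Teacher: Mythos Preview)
Your proposal is correct and follows essentially the same approach as the paper: derive from \eqref{eqn:THE_system} the single constraint $a_{i-1}+a_{i+1}=1$ for each mixed edge $i$, then classify fixed points according to which edges are mixed. Your organisation via the propagation dichotomy and the explicit period-$4$ recurrence is slightly cleaner than the paper's, which handles the reverse inclusion by a contradiction on the pattern ``two consecutive mixed edges followed by an unmixed one'', but the underlying idea is identical; the closure bookkeeping you flag as an obstacle is exactly the $\gcd(4,m)$ argument you sketch and presents no real difficulty.
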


\begin{proof}
As $G$ is cyclic, to simplify the exposition of the arguments
throughout, we will use the indices $1$ and $m + 1$ interchangeably in
the sense that $1$ may be viewed as the successor of $m$, and
similarly for $m$ and $0$, as $m$ may be viewed as the predecessor of
$1$.  Accordingly, we may refer to the coordinate $x_1^m$ of $x \in
\D$ as $x_{m + 1}^m$ or $x_{1}^0$.

Given this convention, for any cycle of length $m\in \N$, assume that
the edge starting at vertex $v \in V$ is mixed, that is, the
limit occupation measure for the walk $W^v$ equals $p_{v}^{v} =
a$ and $p_{v+1}^{v} = 1-a$ for some $a \in (0,1)$.  According to
\eqref{eqn:rhoij} and \eqref{eqn:etaiv} we have that $\rpar_{v}^{v-1,v} =
\rpar_{v+1}^{v,v+1} = -1$ and thus  from \eqref{eqn:THE_system} 
that 
$p_{v}^{v-1} = p_{v+1}^{v+1}$. As this holds for any $v = 1, 2,
\ldots m$, it follows that $p \in \Fix(\pi)$, where $p_v^i = \frac12$
for $v \in V$ and $i \in I_v$. That is, $\eC_1 \subset
\Fix(\pi)$. More generally, for $p_{v}^{v} = a \in (0,1)$, we can
choose $p_{v}^{v-1} = p_{v+1}^{v+1} = b$ for any $b \in
(0,1)$. Successive applications of \eqref{eqn:THE_system} show that
the sequence of edges $\big(\!\!\dbr{a, 1-a}$, $\dbr{b, 1-b}$,
$\dbr{1-a, a}$, $\dbr{1-b, b}\!\!\big)$, or any multiple of this
sequence, can be closed to form a cycle proving thus that $\eC_2$ is
formed by points in $\Fix(\pi)$. This shows that $\eC_1 \cup \eC_2
\subset \Fix(\pi)$.

Suppose now that $p\in \eC_3$. In this case, any edge is  unmixed and so
any point in $\eC_3$ satisfies the system \eqref{eqn:THE_system}.
This shows that $\eC_3 \subset \Fix(\pi)$.

Let $p \in \eC_4$. Without loss of generality, suppose that the edge
starting at $v \in V$ is mixed. Since, by definition of $\eC_4$, we
have $p_{v}^{v-1} = 1 = p_{v+1}^{v+1}$ or $p_{v}^{v-1} = 0 =
p_{v+1}^{v+1}$, we have that $p$ satisfies the system
\eqref{eqn:THE_system}.  This shows that $\eC_4 \subset \Fix(\pi)$.

It remains to show that the reverse inclusion holds, namely $\Fix(\pi)
\subset \eC_1 \cup \eC_2 \cup \eC_3 \cup \eC_4$. Suppose $p \in
\Fix(\pi)$. According to the previous considerations, we have the
following: if all the edges of $p$ are mixed then it follows by
\eqref{eqn:THE_system} that $p \in \eC_1 \cup \eC_2$, and if all the
edges of $p$ are unmixed then by \eqref{eqn:THE_system} we have that
$p \in \eC_3$. If all mixed edges of $p$ are flanked by unmixed edges
then it also follows from \eqref{eqn:THE_system} that $p \in
\eC_4$. To conclude we show that, if $p\in\Fix(\pi)$, then either all
the edges of $p$ are mixed, or all the edges of $p$ are unmixed, or
each mixed edge of $p$ is flanked by unmixed edges. Suppose this is
not the case, then $p$ has two consecutive mixed edges followed by an
unmixed edge of the form $\dbr{a, 1-a}$, $\dbr{b, 1-b}$, $\dbr{c,
1-c}$, where $a, b \in (0,1)$ and $c \in \{0,1\}$. Since $p \in
\Fix(\pi)$, it follows by equation system \eqref{eqn:THE_system}, that
$1 - a = c$. But this is impossible as $a \in (0,1)$ and $c \in
\{0,1\}$. This shows that $\Fix(\pi) = \bigcup_{r=1}^4 \eC_r$ and
concludes the proof.
\end{proof}

\begin{proof}[Proof of Theorem~\ref{th:cyclic_G}] 
Proof of $i)$. By Theorem~\ref{th:det}.$(iv)$, $\limset \subset
\Fix(\pi)$ almost surely. The first assertion of the theorem follows
from the previous inclusion because, by
Lemma~\ref{lem:Equilibria_Cyclic_G}, $\Fix(\pi) = \bigcup_{r=1}^4
\eC_r$.

Proof of $ii)$. We will now analyse the particular case when $m$ is
not a multiple of four. In this case observe that $\eC_2 =
\varnothing$ implying that $\eC_1$ is disconnected from
$\eC_3\cup\eC_4$.  Since by Theorem~\ref{th:det}.(iii), $\limset$ is
connected, it follows that either $\limset \subset \eC_1$ or $\limset
\subset \eC_3\cup\eC_4$. Since $\eC_1$ is a singleton, the inclusion
$\limset \subset \eC_1$ implies that $X$ converges to the unique point
of $\eC_1$. However, as we show in the next paragraph, $X$ does not
converge to such a point when $m$ is not a multiple of four, and
therefore we conclude that $\limset \subset \eC_3\cup\eC_4$.

Let $m>3$ be an integer that is not a multiple of four. Let $p \in
\eC_1$ and write $\eta = \eta_v^i$ for all $i \in I_v$, $v \in
V$. Straightforward computations show that the Jacobian matrix of the
underlying vector field at $p$ has all eigenvalues with real part
different from zero and one eigenvalue equal to
$\alpha/(2\eta-1)$. This shows that $p$ is linearly unstable. When
$m=3$, Descartes' rule of signs shows that the characteristic
polynomial of the Jacobian at $p$ has a real positive root. The
convergence of $X$ to a point in $\eC_1$ can thus be discarded for any
$m$ that is not a multiple of four by using
Theorem~\ref{th:no-tointerior}.
\end{proof}

\begin{proof}[Proof of Corollary~\ref{cor:if_X_converges}]
Throughout the proof, let $\eta = \eta_v^i$ for all $i \in I_v$,  $v
\in V$. Observe that $\eta > 1$ because of \eqref{eqn:eta}. 

A description of the points in $\Fix(\pi)$ to which $X$ can converge
to proceeds by identifying the elements in the sets $\eC_r$, $r = 1,
2, 3, 4$, of Lemma~\ref{lem:Equilibria_Cyclic_G} that can be discarded
via Theorem~\ref{th:no-tointerior} or Theorem~\ref{th:no-toface}. Let
$p \in \bigcup_{r=1}^4 \eC_r$ be a limit point of $X$. We will show
that $p \in \tilde{\eC}_1 \cup \tilde{\eC}_3 \cup \tilde{\eC}_4$.

Suppose first that $p \in \eC_1$. In this case $m$ is a multiple of
four, and by the definition of $\tilde \eC_1$, we have that $p \in
\eC_1 = \tilde \eC_1$. Indeed, if $m$ is not a multiple of four, then,
as was observed in the last paragraph of the proof of
Theorem~\ref{th:cyclic_G}, $p$ cannot be a limit point of $X$.

Suppose now that $p \in \eC_2$. For any $a, b \in (0,1)$, $\alpha >
0$, the analysis of the Jacobian matrix of the vector field at the
point $p \in \eC_2$ shows that this equilibrium is linearly
unstable. As $p\in \D^\circ$, convergence to this point can be ruled
out via Theorem~\ref{th:no-tointerior}.

Assume that $p \in \eC_3 \cup \eC_4$.  Assume, by contradiction, that
$X$ converges to a point $p \in (\eC_3 \cup \eC_4) {\setminus} (\tilde
\eC_3 \cup \tilde \eC_4$). In that case, $p$ has a subsequence either
of type
\begin{equation}
 \label{eqn:sequenceA}
 \ldots, \dbr{1-a, a}, \dbr{0,1}, \dbr{1,0}, \ldots\ \text{ for }\  a
 \in [0,1)   
\end{equation}
or  
\begin{equation}
 \label{eqn:sequenceB}
  \ldots, \dbr{1, 0}, \dbr{0,1}, \dbr{b,1-b}, \ldots\ \text{  
for } b \in (0,1]. 
\end{equation}

Observe that the condition that defines $\tilde\eC_4$ via
\eqref{eqn:C4tilde} includes the condition in the definition of
$\tilde\eC_3$ by \eqref{eqn:C3tilde}, as $a$ and $b$ may be $0$ and
$1$ respectively. If $p$ contains the subsequence
\eqref{eqn:sequenceA}, then, for a given vertex $v\in V$, we have
$p_v^{v-1}=a$, $p_v^v=0$, $p_{v+1}^v = 1$ and $p_{v+1}^{v+1}=1$, where
$a \in [0,1)$. This situation is depicted by the following scheme

{\centering
  \begin{tikzpicture}[line width=.8pt]
    \draw[white] (-2,-1) -- (4,-1) -- (4,.75) -- (-2,.75);
    \draw[fill=black] (-.69,-.48) circle (.3pt);
    \draw[fill=black] (-.80,-.56) circle (.3pt);
    \draw[fill=black] (-.91,-.65) circle (.3pt);
    \draw (-0.6,-0.4) -- (0,0) -- (2,0) -- (2.6,-0.4);
    \draw[fill=black] (2.69,-.48) circle (.3pt);
    \draw[fill=black] (2.80,-.56) circle (.3pt);
    \draw[fill=black] (2.91,-.65) circle (.3pt);
    \draw[fill=white] (0,0) circle (3.25pt);
    \draw[fill=white] (2,0) circle (3.25pt);
    \node at (-0.35,0.0) {\small $a$};
    \node at (0.3,0.25) {\small $0$};
    \node at (1.7,0.25) {\small $1$};
    \node at (2.35,0.0) {\small $1$};
    \node at (0.0,-0.4) {\scriptsize $v$};
    \node at (1.9,-0.4) {\scriptsize $v+1$};
\end{tikzpicture}
\par}
\noindent In this case $\lim_{x\to p} \pi_{v}^v(x)/x_{v}^v =
\big((\eta-a)/(\eta-1)\big)^\alpha > 1$ as $a <
1$. Theorem~\ref{th:no-toface} allows then to discard the convergence
of $X$ towards any point $p$ of this form.

If $p$ contains the subsequence \eqref{eqn:sequenceB}, then, for a
given vertex $v\in V$, $p_v^{v-1}=0$, $p_v^v=0$, $p_{v+1}^v = 1$ and
$p_{v+1}^{v+1}=b$. This situation is depicted by the following scheme

{\centering
  \begin{tikzpicture}[line width=.8pt]
    \draw[white] (-2,-1) -- (4,-1) -- (4,.75) -- (-2,.75);
    \draw[fill=black] (-.69,-.48) circle (.3pt);
    \draw[fill=black] (-.80,-.56) circle (.3pt);
    \draw[fill=black] (-.91,-.65) circle (.3pt);
    \draw (-0.6,-0.4) -- (0,0) -- (2,0) -- (2.6,-0.4);
    \draw[fill=black] (2.69,-.48) circle (.3pt);
    \draw[fill=black] (2.80,-.56) circle (.3pt);
    \draw[fill=black] (2.91,-.65) circle (.3pt);
    \draw[fill=white] (0,0) circle (3.25pt);
    \draw[fill=white] (2,0) circle (3.25pt);
    \node at (-0.35,0.0) {\small $0$};
    \node at (0.3,0.25) {\small $0$};
    \node at (1.7,0.25) {\small $1$};
    \node at (2.35,0.0) {\small $b$};
    \node at (0.0,-0.4) {\scriptsize $v$};
    \node at (1.9,-0.4) {\scriptsize $v+1$};
\end{tikzpicture}
\par}
\noindent In this case, $\lim_{x\to p} \pi_v^v(x)/x_v^v =
\big(\eta/(\eta - b)\big)^\alpha >1$ as $b \in (0,1]$ and $\eta >
1$. Theorem~\ref{th:no-toface} allows then to discard the convergence
of $X$ towards any point $p$ of this form. This contradicts the fact
that $X$ converges to a point $p$ having a sub-sequence of the form
\eqref{eqn:sequenceA} or \eqref{eqn:sequenceB}, and so $X$ converges
to a point $p$ in $\tilde \eC_3 \cup \tilde \eC_4$.
\end{proof}

\begin{proof}[Proof of Theorem~\ref{th:cyclic_G_e}]
The proof consists in showing that there is a sufficiently small
$\epsilon_* > 0$ such that, for $0 < \epsilon < \epsilon_*$,
we have that $|\D(S)| = 1$ for at least one $S\in \V$, and $|\D(S)|
\leq 1$ for all other $S\in \V$, in which case, by item $(i)$ of
Theorem~\ref{th:det}, we have $1 \leq |\Fix(\pi)| \leq |\V| < \infty$
for $0 < \epsilon < \epsilon_*$. By Theorem~\ref{th:det}.$(iv)$
it then follows that $X$ converges almost surely to some element of 
$\Fix(\pi)$.

Assume first that $|S_i| = 1$ for all $i \in [m]$. In this case, it
follows trivially that $|\D(S)| = 1$. Indeed, in this case, the only
one element $x$ of $\D(S)$ is such that $x_v^i = 1$ iff $v \in S^i$
and $x_v^i = 0$, otherwise, $i = 1, 2, \ldots m$.  Now, since $\V$ is
finite, it is sufficient to show that, for each $S\in \V$, where
$|S^i| > 1$ for at least one $i = 1$, there is a sufficiently small
$\epsilon(S) > 0$ such that $|\D(S)| \leq 1$ for $0 < \epsilon <
\epsilon(S)$. Since $|S^i| \leq 2$ for all $i \in [m]$, the proof
follows by considering the following two types of $S \in \V$:
\begin{enumerate}[nosep,label=(\Roman*),ref=(\Roman*)]
\item\label{itm:I} $|S_i| = 2$ for all $i \in [m]$; and  
\item\label{itm:II} $|S_i| = 1$ and $|S_j| = 2$ for at least two
different $i,j \in [m]$.
\end{enumerate}

Since $V^m = \{m, 1\}$ and $V^i = \{i, i+1 \}$ for $i =1, 2, \ldots
m-1$, the variables in \eqref{eqn:THE_system} are such that
$x_m^m + x_{1}^m = 1$ and
$x_i^i + x_{i+1}^i = 1$ for $i = 1, 2, \dots, m - 1$. Substituting
$x_{1}^m$ by $1 - x_m^m$ and $x_{i+1}^i$, by $1 - x_i^i$ for $i = 1,
2, \dots, m - 1$, the system \eqref{eqn:THE_system} reduces to a
system with $m$ variables $z_1 = x_1^1, z_2 = x_2^2, \ldots, z_m =
x_m^m$ such that
\begin{equation}\label{eqs2}
\left\{
\begin{gathered}
  z_{i} = C_i \quad\text{for}\quad |S^i| = 1,\\
  -z_{[i-1]_m}  +2 \varepsilon z_{{[i]}_m}  -z_{{[i+1]}_m} = -1
  \quad\text{for}\quad |S^i| = 2,
\end{gathered}
\right. \qquad i \in [m]
\end{equation}
where $[i]_m = m$ if $i$ is a multiple of $m$ and $[i]_m = i \bmod m$
if $i$ is not a multiple of $m$, and $C_i = C_i(S)$ is such that $C_i
= 1$ if $i \in S^i$, and $C_i = 0$, otherwise.

In case~\ref{itm:I}, the system \eqref{eqs2} is given by the equations
$-z_{[i-1]_m}  +2 \varepsilon z_{{[i]}_m}  -z_{{[i+1]}_m} = -1$, $i \in
[m]$, only. The corresponding matrix of coefficients of
this system is the $m$-by-$m$ circulant matrix with first row equal to
$(2\epsilon, -1, 0, \ldots, 0, -1)$. This matrix has the following
characteristic polynomial 
\[
  \mathcal P_S(\epsilon) =
  2^m \prod_{k=0}^{m-1} \Big(\epsilon - \cos\Big(\frac{2\pi}{m}
  k\Big)\Big).
\]
It is evident from the form of $\mathcal P_S(\epsilon)$ that $\mathcal 
P_S(\epsilon) \neq 0$ and hence $|\D(S)| \leq 1$ for $0 < \epsilon < 
\epsilon(S)$, where $\epsilon(S)$ is a sufficiently small positive number 
depending on $S$. 

In case~\ref{itm:II}, we observe that the system \eqref{eqs2} is
equivalent to a new system having the same variables $z_i$, $i \in
[m]$ as in \eqref{eqs2}, with the only difference that, in the $i$-th
equation, $-z_{[i-1]_m} +2 \varepsilon z_{{[i]}_m} - z_{{[i+1]}_m} =
-1$, presented in the second line of \eqref{eqs2}, the variable
$z_{k}$ is replaced by the constant $C_{k}$ whenever $|S^k| = 1$, $k
\in \{[i-1]_m, [i+1]_m\}$, $i \in [m]$, $|S^i| = 2$.

Let $K = \{z_i \mid i \in [m], \,\, |S^i| = 2 \}$. Without loss of
generality, we can partition $K$ into $L$ subsets of consecutively
indexed variables, such that $K = \bigcup_{j = 1}^L \{z_{\ell_j + 1},
z_{\ell_j + 2}, \ldots, z_{\ell_j + r_j}\}$, where $0 < \ell_j + r_j <
\ell_{j+1} < m$, $\ell_j \geq 1$ and $r_j \geq 1$, $j = 1,2, \ldots
L$. Setting $C_{j,1} = C_{\ell_j}$, $C_{j,2} = C_{\ell_j + r_j +1}$,
and $z_{j,k} = z_{\ell_j + k}$, where $k = 1, 2, \ldots, r_j$ and $j =
1, 2, \ldots, L$, the equations concerning the variables in $K$ can be
written as $L$ independent subsystems of linear equations, where the
$j$-th subsystem is of the form
\begin{equation}\label{eqn:subsys}
  \left\{
    \begin{aligned}
       &2 \varepsilon\, z_{j,1} - z_{j,2} =
      -1 + C_{j,1}                                                 \\  
       - & z_{j,k} +2 \varepsilon\, z_{j,k+1} - z_{j,k+2} = -1, 
       \qquad k = 1, \ldots, r_j - 2,                              \\  
      - & z_{j, r_j-1}  +2 \varepsilon\, z_{j, r_j} =
      -1 + C_{j,2}
    \end{aligned}
  \right.
\end{equation}

Now, the corresponding matrix of coefficients of subsystem
\eqref{eqn:subsys}, hereafter denoted by $T_j$, is a $r_j \times r_j$
symmetric tridiagonal T\"oeplitz matrix with band elements $-1,
2\epsilon, -1$. According to Theorem 2.4 in \cite{BA05}, $T_j$ has
characteristic polynomial 
\[
  \mathcal P_{S,j}(\epsilon)
   =
   2^{r_j} \prod_{k=1}^{r_j} \Big(\epsilon -
   \cos\Big(\frac{\pi}{r_j+1} k\Big)\Big).
\]
The form of $\mathcal P_{S,j}(\epsilon)$ implies that $\mathcal
P_{S,j}(\epsilon) \neq 0$ for $0<\epsilon<\epsilon_j(S)$, where
$\epsilon_j(S)$ is a sufficiently small positive number depending on
$S$ and $j$. Observe now that for any $S\in\V$, the matrix of
coefficients in case \ref{itm:II} can be represented, without loss of
generality, as a block diagonal matrix of the form
\[
 \begin{bmatrix}
  I_1 &     &        &      &      &         &      &      \\
      & T_1 &        &      &      &         &      &      \\
      &     & \ddots &      &      &         &      &      \\
      &     &        & I_j  &      &         &      &      \\
      &     &        &      & T_j  &         &      &      \\
      &     &        &      &      & \ddots  &      &      \\
      &     &        &      &      &         & I_L  &      \\
      &     &        &      &      &         &      & T_L
 \end{bmatrix}
\]
where $I_j$, $j=1, \ldots, L$, are identity matrices of size $\ell_1
\times \ell_{1}$ for $j = 1$, and $(\ell_{j} - (\ell_{j-1} +
r_{j-1}))\times (\ell_j - (\ell_{j-1} + r_{j-1}))$ for $j > 1$. The
coefficient matrix of the full system has therefore characteristic
polynomial $\mathcal P_S(\epsilon) = \prod_{j=1}^L \mathcal
P_{S,j}(\epsilon)$. As a consequence, for any $S$ in the
case~\ref{itm:II} we have that $\mathcal P_S(\epsilon) \neq 0$ and
hence $|D(S)| \leq 1$ for $0 < \epsilon < \epsilon(S)$, where
$\epsilon(S) = \min_j \epsilon_j(S)$. This concludes the proof of the
theorem.
\end{proof}

\section*{Appendix}
\appendix
\section{Proofs of Lemmas~\ref{lem:SA} and \ref{lem:limitset}} 
\begin{proof}[Proof of Lemma~\ref{lem:SA}] In order to show
\eqref{eqn:SA}, observe first that for any $i \in [m]$ and $v \in
V^i$,
\begin{align*}
  X_{v}^i(n+1) - X_{v}^i(n)
  &=
  \frac{1+\sum_{k=0}^{n-1}\xi_{v}^i(k) + \xi_{v}^i(n)}{d^i+n+1}
  -
    \frac{1+\sum_{k=0}^{n-1}\xi_{v}^i(k)}{d^i+n} \\
  &=
    \frac{1}{d^i+n+1}\Big(-X^i_{v}(n) + \xi_{v}^i(n)\Big).
\end{align*}
This gives
\begin{align*}
  X(n+1) - X(n)
  &=
  \Big\{
  \Big(-X(n)+\E[\xi(n)\mid\F_n]\Big) + \Big(\xi(n) - \E[\xi(n)\mid  
  \F_n]\Big)\Big\}\iG_n   \\
  &=
    \big(
    -X(n)+\E[\xi(n)\mid\F_n] + U(n)
    \big)\iG_n            \\
  &=
    \big(
    -X(n)+\pi(X(n)) + U(n)
    \big)\iG_n.
\end{align*}
The last equality follows by observing that $\E[\xi^i_v(n) \mid
\F_n] = \Prb(W^i(n+1) = v \mid \F_n) = \pi_v^i(X(n))$. Equation
\eqref{eqn:SA} follows from this by using the definition of $F$
given in \eqref{eqn:THE_field}.
\end{proof}

\begin{proof}[Proof of Lemma~\ref{lem:limitset}]
Let $X$, $U$, $\gamma_n^i$, and $\Xi_n$ be as in the statement of
Lemma~\ref{lem:SA}. For $d = \sum_{i=1}^m d^i$, let $\widehat
\Xi_n$ be the $d\times d$ diagonal matrix with all diagonal elements
equal to $\widehat\gamma_n$ where $\widehat\gamma_n =
\max_{i}\gamma_n^i$. Both assertions about the limit set $\limset$
follow from Theorem 1.2 in \cite{B96} provided the following
conditions are satisfied:
\begin{enumerate}[nosep]
\item[(\emph{i})] $X=\{X(n); n\geq 0\}$ is bounded;
\item[(\emph{ii})] $\widehat\gamma_n$ satisfies:
\[
  \lim_{n\to\infty} \widehat\gamma_n = 0,
  \quad
  \sum_{n\geq 0}\widehat\gamma_n = \infty,
  \quad\text{and}\quad
  \sum_{n\geq 0} \widehat\gamma_n^2 < \infty;
\]
\item[(\emph{iii})] For each $T>0$, almost surely it holds
  that
\begin{equation*}
  \lim_{n\to\infty}\bigg(\sup_{r: 0 \leq \tau_r - \tau_n \leq
    T}                
  \bigg\| \sum_{k=n}^{r-1} U(k) \widehat\Xi_k\bigg\|\bigg) = 0,
\end{equation*}
where $\tau_0=0$ and $\tau_n = \sum_{k=0}^{n-1}\widehat\gamma_k$.
\end{enumerate}
Conditions (\emph{i}) and (\emph{ii}) are immediate. To show
(\emph{iii}) define $M(n) = \sum_{k=0}^n U(k)\widehat\Xi_k$,
$n\geq0$. It is simple to verify that $\{M(n); n\geq 0\}$ is a
martingale with respect to $\{\F_{n+1}; n\geq 0\}$;
further
\begin{align*}             
  \sum_{n\geq 0} \E\Big[\big\| M(n+1) - M(n)\big\|^2 \, \Big|\,
  \F_{n+1}\Big]
  &=
  \sum_{n\geq 0} \E\Big[\Big\|U(n+1)\widehat\Xi_{n+1}\Big\|^2\,\Big|\,
    \F_{n+1}\Big]                                             \\ 
  &\leq 
  \sum_{n\geq 0} \bigg(\sum_{i=1}^m \widehat\gamma_{n+1}
    d^i\bigg)^2  \\
  &\leq 
    d^2 \sum_{n\geq 0} \widehat\gamma_{n+1}^2 < \infty.
\end{align*}
A standard result about square integrable martingales then shows that
$M(n)$ converges almost surely to a finite limit and thus $\{M(n);
n\geq 0\}$ is almost surely a Cauchy sequence.  This is sufficient to
establish condition (\emph{iii}).
\end{proof}

\section{Proof of Lemmas~\ref{Coupling-Working} and \ref{lem:b.r.v}}
\begin{proof}[Proof of Lemma~\ref{Coupling-Working}]
Let $a > 1$ be the value of the limit in \eqref{eqn:abar}. Choose
$\epsilon > 0$ such that $\pi_k^i(x) / x^i_k > (a+1)/2$ for all $x \in
\txcal{N}_\epsilon(p)$, which is an $\epsilon$-neighborhood of $x$ in
$\D$.  In this case $\pi_k^i(X(m)) \geq X^i_k(m) (a+1)/2$, whenever
$X(m) \in \txcal{N}_\epsilon(p)$. Since $X(n)$ converges to $p$ with
positive probability, we can choose $n_0 > 0$ sufficiently large such that
$A = \{X(m) \in \txcal{N}_\epsilon(p)\ \text{for all } m \geq n_0\}$
occurs with positive probability and $\Prb\big(\pi_k^i(X(m)) \geq
X^i_k(m) (a+1)/2 \mid A\big)$ for all $m \geq n_0$.

Now, set $q = d^i$, define
\[
  b(n) = \frac{(a-1)/2 - (q/n  + \delta)}{q/n + 1 + \delta},
\]  
and suppose that $\delta > 0$ is sufficiently small and $n_0 > 0$ is
sufficiently large such that  
\[
  b = b(n_0) > 0. 
\]
From the definition of $b$ and the fact that $n > n_0$, it follows
that
\begin{equation}
  \label{eqn:geq1b}
  \frac{n(a + 1)/2}{q + \floor{n(1+\delta)}} 
  \geq
  \frac{n(a + 1)/2}{q + n(1+\delta)} = 1 +  b(n) \geq 1 + b(n_0)  = 1
  +  b. 
\end{equation}
Assuming that $X^i_k(m) \geq n X^i_k(n)/(q + \floor{n(1+\delta)})$ for
all $m \in T_{n,\delta}$ and $n > n_0$, conditionally on $A$ it
follows with probability one that
\begin{align*}
  \Prb(W^i_{m+1}=k\mid \F_m) 
  =
  \pi^i_k(X(m))
  &\geq  
    X^i_k(m) \frac{a+1}{2} \\
  &\geq
    X_k^i(n)\frac{n(a+1)/2}{q + \floor{n(1+\delta)}} \,\, \geq \,
    X^i_k(n)(1+b).
\end{align*}
The last inequality follows from \eqref{eqn:geq1b}.
 
It remains to show that $X^i_k(m) \geq n X^i_k(n)/(q +
\floor{n(1+\delta)})$ for all $n > n_0$ and all $m \in T_{n,\delta}$.
Fix $n$ and $m$ arbitrarily such that $n_0 < n$ and $m \in
T_{n,\delta}$.  Then $0< n_0 < n \leq m \leq \floor{n(1+\delta)} -
1$. From the definition of $X_k^i(n)$ in
\eqref{eqn:occupation_measure} and the previous inequalities, we
conclude that
\begin{align*}
 X^i_k(m)
    &\geq
      \frac{1}{q+m}\bigg(1+\sum_{\ell=1}^n \Ind\big\{W^i(\ell) =
      k\big\} \bigg)     
  =
  \frac{q+n}{q+m} X^i_k(n) \\
    &\geq \frac{n}{q + 1 +m} X^i_k(n)     \\
    &\geq \frac{n}{q + 1 + (\floor{n(1+\delta)}  - 1)} 
      X^i_k(n). 
\end{align*}
This completes the proof of the lemma.
\end{proof}

\begin{proof}[Proof of Lemma~\ref{lem:b.r.v}]
To simplify notation, we first reidex the random variables
$\Ber(\ell)$ as follows. Let $B(1) = 1$, $B(\ell) = 0$ for $\ell = 2,
3,\ldots, q$, $B(\ell) = \Ber(\ell - q)$ for all $\ell > q$, and $\bar
B(n) = \frac{1}{n}\big(\sum_{\ell=1}^n B(\ell) \big)$ for $n >
q$. Note that $\bar B(n) = \bar \Ber(n - q)$ for $n > q$. It is now
sufficient to show the following equivalent assertion rewritten in
terms of $B$ and $\bar B$ instead of $\Ber$ and $\bar \Ber$. There are
no $\delta >0$, $b>0$, $q > 0$, $n_0 > q$ and $A \in \F$ with
$\Prb(A)>0$ such that
\begin{equation}
 \label{eqn:b1bber2}
 \Prb \Big\{\Prb \big(B(m+1) = 1 \mid \F_m \big) \geq \bar
  B(n)(1+b)  \,  \Big  |  \, A \Big\}  = 1
  \ \ \text{for all }\ m \in   T_{n,\delta} \ \ \text{and}\ \ n >
  n_0.
\end{equation}

Suppose by contradiction there are $\delta >0$, $b>0$, $q > 0$, $n_0 >
q$, and $A \in \F$, with $\Prb (A) > 0$, such that \eqref{eqn:b1bber2}
holds. Define $n_{\tau+1} = \floor{n_{\tau}(1+\delta)}$ 
for $\tau = 0, 1, 2, \ldots$ and set $\bar B_\tau = \bar B(n_\tau)$,
$S_\tau = S(n_\tau) = \sum_{\ell = 1}^{n_\tau} B(\ell)$ and
$D_{\tau+1} = D(n_{\tau+1}) = S(n_{\tau + 1}) -S(n_{\tau}) =
\sum_{\ell = n_\tau + 1}^{n_{\tau+1}} B(\ell)$. Then for $\tau \geq
0$, we have that
\begin{align*}
  \bar B_{\tau + 1}
  &=
    \frac{S \big (\floor{n_\tau (1+
    \delta)} \big )}{\floor{n_\tau (1+
    \delta)} }  \\
  &\geq  
    \frac{\delta}{1 + \delta} \frac{S \big (\floor{n_\tau (1+
    \delta)} \big )}{\delta n_\tau}                                 \\ 
  &= 
    \frac{\delta}{1 + \delta} \bigg [S(\floor{n_\tau (1+
    \delta)}) - S(n_\tau) \bigg ] \frac{1}{\delta n_\tau} +
    \frac{\bar B(n_\tau)}{1 + \delta}                               \\
  &= 
    \Big ( \frac{\delta}{1 + \delta} \frac{D_{\tau+1}}{\delta
    S_\tau} + \frac{1}{1 + \delta} \Big )   \bar B_\tau          \\
  &= 
    F(Z_{\tau+1}) \bar B_\tau, 		
\end{align*}
where $F(y) = \delta/(1 + \delta) y + 1/(1 + \delta)$ and
$Z_{\tau+1} = D_{\tau+1}/(\delta S_\tau)$.

As a consequence, it follows that 
\[
 \E\big[\log \big(\bar B_{\tau + 1}\big)   \Ind_A\big] 
 \geq
 \E\big[\log \big (\bar B_{\tau}) \Ind_A\big] +
 \E\big[\log \big(F(Z_{\tau+1})\big) \Ind_A\big].
\]	
Since $\bar B_{\tau} \geq 1/n_\tau$, we have that $\E\big[\log \big
(\bar B_{\tau} \big ) \Ind_A \ \big] > - \infty$ for all $\tau \geq
0$. To reach a contradiction to the assumption that $\Prb(A)>0$, we
will use the definition of $A$ presented in \eqref{eqn:b1bber2} and
show that $\lim_{\tau \to \infty} \E\big[\log\big(F(Z_{\tau+1})\big)
\Ind_A \ \big] > 0$ which then leads to $\lim_{\tau \to \infty}
\E\big[\log \big (\bar B_{\tau}) \Ind_A \ \big] = \infty$. The last
equality contradicts the fact that $\log \big (\bar B_{\tau} \big )
\leq 0$ for all $\tau = 0, 1, 2, \ldots$.

Since $\E\big[\log \big(F(Z_{\tau+1})\big) \Ind_A\big] = \E\big[\log
F(Z_{\tau+1}) \ \big|\ A\big]\Prb(A)$ and $\Prb(A)>0$, it is
sufficient to show that
\[
  \lim_{\tau \to \infty} \E\big[\log F(Z_{\tau+1}) \ \big|\ A
  \big] > 0.
\]
Set $a_\tau = \big [(n_{\tau + 1} - n_\tau)/(\delta n_\tau) \big ] (1
+ b) - b/2$ and $\epsilon_{\tau} = \Prb (Z_{\tau+1} < a_\tau \, | \, A
\, )$. Taking into account that $Z_{\tau+1} \geq 0$, it follows that
$Z_{\tau+1} \geq Y_{\tau+1} = a_\tau \Ind{\{Z_{\tau+1} \geq a_\tau \}}
\in \{0, a_\tau\}$. Since $\log$ and $F$ are increasing, then
\begin{align*}
  \E\big[\log F(Z_{\tau+1}) \, \big| \, A\big] 
  \geq  
   \E\big[\log F(Y_{\tau+1})\, \big| \,  A\big]
   %
 =
    \epsilon_{\tau} \log F(0)  +  (1 -  \epsilon_{\tau})\log F(
    a_{\tau}).
\end{align*}
Now, observing that $a_\tau \to (1 + b/2)$ and assuming that
$\epsilon_\tau \to 0$ gives
\[
  \lim_{\tau \to \infty}
  \E\big[\log F(Z_{\tau+1}) \ \big|\ A \big] \geq \log F(
  1 + b/2) > 0.
\]

We will show now that $\epsilon_\tau \to 0$. Let $U_\ell$, $\ell
= 1, 2, 3,\ldots$ be independent uniformly distributed random
variables over $[0,1]$ and set $\tilde D_{\tau+1} = \sum_{\ell =
n_{\tau} + 1}^{n_{\tau + 1}} \Ind\{U_\ell < \bar B_\tau (1 + b)\}$ and
$\tilde Z_{\tau+1} = \tilde D_{\tau+1}/(\delta S_\tau)$. The
desired limit for $\epsilon_\tau$ may be obtained  from the following
facts:
\begin{enumerate}[$(i)$.,nosep,leftmargin=1cm]
\item $\E\big[\Prb\big(\tilde Z_{\tau+1} < a_\tau \mid \F_{n_\tau}A
  \big)\, \big|\, A \  \big]
  \leq  \E \big[K_\tau/S_\tau\mid A \big]$, where 
\item  $K_\tau$ is a sequence of numbers such that $\lim_{\tau
\to \infty} K_\tau \in [0, \infty)$;
\item $\Prb \big (\lim_{\tau \to \infty} S_\tau = \infty \, \big | \,
A \ \big ) = 1$;
\item $\E\big[\Prb \big (Z_{\tau+1} < a_\tau \, | \, \F_{n_\tau}A \big
) \, \big | \, A \big] \leq \E\big[\Prb \big (\tilde Z_{\tau+1} <
a_\tau \, | \, \F_{n_\tau}A \big ) \, \big | \, A \big]$;
\end{enumerate}
as in this case 
\begin{equation} \label{eqn:converg.epsilon}
  \epsilon_{\tau} = \E\Big[\Prb \big (Z_{\tau+1} < a_\tau  \, |
  \, \F_{n_\tau}A  \big ) \, \Big | \, A \  \Big]
  \leq  \E \big[K_\tau/S_\tau   \,  \big | \,
  A \big] \to 0. 
\end{equation}

We proceed now to show each of these facts.

\textbf{Fact $(i)$}.
Set $\mu_\tau = \big [(n_{\tau + 1} - n_\tau)/(\delta n_\tau) \big ]
(1 + b)$ and note that $\E\big[\tilde Z_{\tau + 1} \mid \F_{n_\tau}
A \big] = \mu_\tau$ and $a_\tau = \mu_\tau - \frac{b}{2}$.
Chebyshev's inequality gives $\Prb\big(\tilde Z_{\tau+1} < a_\tau
\, | \, \F_{n_\tau} A \big ) = \Prb \big (\tilde Z_{\tau+1} <
\mu_\tau - b/2 \, | \F_{n_\tau} A \big ) \leq \Var(\tilde Z_{\tau +
1} | \, \F_{n_\tau} A ) /(b/2)^2$. Since $\tilde Z_{\tau+1} =
\tilde D_{\tau+1}/(\delta S_\tau)$ and $\tilde D_{\tau+1}$ has
a binomial probability distribution given $\F_{n_\tau} A$, it follows
that
\[
\Var(\tilde Z_{\tau+1} | \, \F_{n_\tau} A ) = \Var(\tilde
D_{\tau+1} | \, \F_{n_\tau} A)/(\delta S_\tau)^2 \leq \E(\tilde
D_{\tau+1} | \, \F_{n_\tau} A)/(\delta S_\tau)^2 = \mu_\tau
/(\delta S_\tau).
\]
Letting $K_\tau = \mu_\tau / \big [\delta (b/2)^2
\big ]$ and combining the two previous inequalities gives
\begin{equation}\label{eqn.chebyn}
   \Prb \big (\tilde Z_{\tau+1} < a_\tau   \, |
  \, \F_{n_\tau} A \big ) \leq  K_\tau/S_\tau.
\end{equation}
To conclude take the conditional expectation given $A$ on
both sides of \eqref{eqn.chebyn}. 

\textbf{Fact $(ii)$}. 
To see that $\lim_{\tau \to \infty} K_\tau \in [0, \infty)$, note that
$K_\tau = \mu_\tau / \big [\delta (b/2)^2 \big ]$, $\mu_\tau = (b + 1)
(n_{\tau + 1} - n_\tau)/(\delta n_\tau)$, and $(n_{\tau + 1} -
n_\tau)/(\delta n_\tau) = (\floor{n_\tau (1+ \delta)} -
n_\tau)/(\delta n_\tau) \to 1$ as $\tau \to \infty$.
  
\textbf{Fact $(iii)$}.
Let $E_m = \{B(m) = 1\}$ for $m \geq n_0$. Since $S_\tau = \sum_{m =
1}^{n_\tau} B(m)$, it is sufficient to show that $\Prb(E_m\ \
\text{i.o.} \, | \, A) = 1$.  Fix $\ell\geq n_0$ arbitrarily, set
$p_\ell= \Prb\big( E_\ell\, \big | \, A \big )$ and, for $m \geq
\ell$, and define $p_{m+1} = \Prb\big( E_{m+1} \, \big | \,
\bigcap_{k= \ell}^{m} E_{k}^c \cap A \big )$. For $N > \ell$, we have
that
\[
\Prb \bigg(\bigcap_{m = \ell}^N  E_m^c \,\Big|\, A \  \bigg) =
    \prod_{m = \ell}^{N-1} \Big (1 - p_{m+1} \Big) (1 - p_\ell)
    \leq                                                              
    \exp\bigg(- \sum_{m = \ell}^{N-1}  p_{m+1}\bigg) 
\]
and thus, assuming that $p_{m+1} \geq 1/m$ for $m \geq \ell\geq n_0$
gives
\[
  \lim_{N \to \infty}    \Prb \bigg(\bigcap_{m = \ell}^N  E_m^c \,
  \Big| \, A \ 
    \bigg) 
    \leq
    \lim_{N \to \infty} \exp\bigg(- \sum_{m = \ell}^{N-1}
    \frac{1}{m}\bigg) = 0  
\]
which then shows that $\Prb(E_m\ \ \text{i.o.}\mid A \ ) = 1$.

We now verify that $p_{m+1} \geq 1/m$ for $m \geq \ell\geq n_0$. Chose
$\tau \geq 0$, such that  $m \in T_{n_\tau, \delta}$. It follows from
\eqref{eqn:b1bber2} that
\begin{align*}
  p_{m+1}
  &=
  \E\bigg[\Prb \big ( B(m+1) = 1 \, | \, \F_{m} \big )
  \ \bigg|\ \bigcap_{k=\ell}^{m} E_{k}^c \cap A \ \bigg] \\
  &\geq
  \E\bigg[\bar B(n_\tau)(b + 1) \ \bigg|\
    \bigcap_{k=\ell}^{m} E_{k}^c \cap A \ \bigg] \\
  &\geq \frac{1}{m}, 
\end{align*}
where the previous inequality holds because
\[
  \bar B(n_\tau)(b + 1)
  \geq \sum_{k=1}^{n_\tau} \frac{B(k)}{n_\tau} \geq
  \frac{B(1)}{n_\tau} =  \frac{1}{n_\tau}
  \geq \frac{1}{m}.
\]
Note that the last inequality holds because $m \in T_{n_\tau, \delta}$
and therefore $m \geq n_\tau$.

\textbf{Fact $(iv)$}.
Since $Z_{\tau + 1} = D_{\tau+1} /(\delta S_\tau)$ and $\tilde Z_{\tau
+ 1} = \tilde D_{\tau+1} /(\delta S_\tau)$, and since the conditional
expectation given $A$ is monotone it is sufficient to show that
\begin{equation}
\label{eqn:dominance0}
 \Prb \big (D_{\tau+1} \geq  a \mid \F_{n_\tau} A \big )
 \geq
 \Prb\big(\tilde D_{\tau + 1}\geq   a  \mid \F_{n_\tau}  A \big)
 \quad \text{ for all }\ a \geq 0.
\end{equation}
Note that $D_{\tau+1}$ and $\tilde D_{\tau+1}$ are respectively
the following sums of Bernoulli random variables $\sum_{m \in
T_{n_\tau, \delta}} B(m +1)$ and $\sum_{m \in T_{n_\tau, \delta}}
B'(m+1)$, where $B'(m+1) = \Ind\{U_{m+1} < \bar B(n_\tau) (1 +
b)\}$. While the latter sum considers independent and identically
distributed random variables given $\F_{n_\tau}A$, the former
considers possibly dependent ones. Nevertheless, by
\eqref{eqn:b1bber2}, conditioned on $\F_{n_\tau} A$, it follows 
that
\begin{equation}
\label{eqn:dominance2}
 \Prb\big(B(m+1) = 1  \mid \F_{m} A  \big) \geq  B(n_\tau)(1 + b)
 =
 \Prb \big(B'(m+1) = 1 \mid \F_{n_\tau} A \big)
\end{equation}
holds with probability 1, for any $m \in T_{n_\tau, \delta}$. It is
relatively simple to verify now that \eqref{eqn:dominance0} follows
from \eqref{eqn:dominance2} and a standard coupling argument.  This
concludes the proof of the lemma.
\end{proof}

\textbf{Acknowledgement}
We like to thank Adriano J. Holanda for writing the code used in
various simulations, made while the work of this article was carried
out.

\bibliographystyle{amsplain} 

\providecommand{\bysame}{\leavevmode\hbox to3em{\hrulefill}\thinspace}
\providecommand{\MR}{\relax\ifhmode\unskip\space\fi MR }
\providecommand{\MRhref}[2]{%
  \href{http://www.ams.org/mathscinet-getitem?mr=#1}{#2}
}
\providecommand{\href}[2]{#2}

\end{document}